\documentclass[12pt, reqno]{amsart}

\usepackage{amsmath,amssymb,latexsym,mathtools, comment}
\usepackage[dvipsnames]{xcolor}
\usepackage{graphicx,xcolor,mathrsfs}
\usepackage{cite}
\usepackage[utf8]{inputenc}
\usepackage[english]{babel}
\usepackage[font={scriptsize}]{caption}
\usepackage{empheq}
\usepackage[most]{tcolorbox}
\usepackage{color}
\usepackage{bbm}

\tcbset{highlight math style={enhanced,
  colframe=blue,colback=blue!1!white,arc=0pt,boxrule=0.5pt}}

\usepackage[margin=3cm, a4paper]{geometry}

\DeclareMathOperator{\supp}{supp}

\DeclareMathOperator*{\esslim}{ess\,lim}

\newtheorem{theorem}{Theorem}[section]
\newtheorem{lemma}[theorem]{Lemma}
\newtheorem{proposition}[theorem]{Proposition}

\newtheorem{definition}[theorem]{Definition}
\newtheorem{corollary}[theorem]{Corollary}
\newtheorem*{main-theorem}{Main Theorem}
\newtheorem*{remark*}{Remark}

\numberwithin{equation}{section}

\usepackage{enumitem}
\usepackage{mathrsfs}

\usepackage[colorlinks=true]{hyperref}
\hypersetup{urlcolor=blue, citecolor=red, linkcolor=blue}

\usepackage{scalerel,stackengine}
\stackMath
\newcommand\widecheck[1]{%
\savestack{\tmpbox}{\stretchto{%
  \scaleto{%
    \scalerel*[\widthof{\ensuremath{#1}}]{\kern-.6pt\bigwedge\kern-.6pt}%
    {\rule[-\textheight/2]{1ex}{\textheight}}
  }{\textheight}%
}{0.5ex}}%
\stackon[1pt]{#1}{\scalebox{-1}{\tmpbox}}%
}

\newlist{abbrv}{itemize}{1}
\setlist[abbrv,1]{label=,labelwidth=1in,align=parleft,itemsep=0.1\baselineskip,leftmargin=!}

\let\olddefinition\definition
\renewcommand{\definition}{\olddefinition\normalfont}
\let\oldremark\remark
\renewcommand{\remark}{\oldremark\normalfont}
\newcommand{\sgn}{\text{sgn}}

\newcommand{\R}{\mathbb{R}}

\newcommand{\N}{\mathbb{N}}

\newcommand{\F}{\mathcal{F}}

\renewcommand{\k}{\kappa}
\newcommand{\x}{\xi}

\newcommand{\e}{\varepsilon}

\newcommand{\dd}{\mathrm{d}}

\newcommand{\loc}{\mathrm{loc}}
\newcommand{\norm}[2]{\|#1\|_{{#2}}}
\newcommand{\snorm}[2]{|#1|_{{#2}}}

\renewcommand{\b}{\beta}
\renewcommand{\d}{\delta}
\newcommand{\cop}[1]{\langle#1\rangle}

\DeclareMathOperator*{\esssup}{ess\,sup}
\DeclareMathOperator*{\essinf}{ess\,inf}
\DeclareMathOperator*{\esslimsup}{ess\,lim\,sup}
\DeclareMathOperator*{\essliminf}{ess\,lim\,inf}

\title[One-sided Hölder smoothing of negative order dispersive equations]{One-sided Hölder regularity of global weak solutions of negative order dispersive equations}

\author[Maehlen]{Ola I.H. M{\ae}hlen}
\address{Department of Mathematics, University of Oslo, 0851 Oslo, Norway}
\email{olamaeh@uio.no}

\author[Xue]{Jun Xue}
\address{Department of Mathematical Sciences, Norwegian University of Science and Technology, 7491 Trondheim, Norway}
\email{jun.xue@ntnu.no}

\keywords{negative order dispersion; weak entropy solutions; existence and uniqueness; Oleinik estimate}
\subjclass[2020]{35L03; 35Q53; 35B30}

\allowdisplaybreaks

\begin{document}

\maketitle

\begin{abstract}
We prove global existence, uniqueness and stability of entropy solutions with $L^2$ initial data for a general family of negative order dispersive equations. These weak solutions are found to satisfy one-sided Hölder conditions whose coefficients decay in time. The latter result controls the height of solutions and further provides a way to bound the maximal lifespan of classical solutions from their initial data.
\end{abstract}

\section{Introduction}
We consider the initial value problem
\begin{align}\label{eq: theMainEquation}
 \left\{ \begin{aligned}
 &u_t+\tfrac{1}{2}(u^2)_x =(G\ast u)_x, \qquad (t,x)\in \mathbb R^+ \times \mathbb R,\\
&\hspace{24pt}u(0,x)=u_0(x), \hspace{56pt} x\in \mathbb R,\\
   \end{aligned}
  \right.
\end{align}
for initial data $u_0\in L^2(\R)$ and an even convolution kernel $G\in L^1(\R)$ admitting an integrable weak derivative $G'\eqqcolon K \in L^1(\R)$. Included in this family of equations is the Burgers--Poisson equation
\begin{align}\label{eq: burgersPoissonEquation}
    u_t + \tfrac{1}{2}(u^2)_x = \bigg(\int_{-\infty}^\infty -\tfrac{1}{2}e^{-|x-y|}u(t,y)\dd y\bigg)_x
\end{align}
 which in \cite{MR0483954} is derived as a model for shallow water waves.
 
 \subsection{Outline of main results}
 
 The paper can be divided in two parts: Section \ref{sec: wellPosedness} establishes the well-posedness of entropy solutions of \eqref{eq: theMainEquation}, while Section \ref{sec: longTermBounds} demonstrates the one-sided Hölder regularity that the solutions enjoy. To the best of our knowledge, these results are new. It was shown in \cite{MR3527628} that the Burgers--Poisson equation \eqref{eq: burgersPoissonEquation} admits unique entropy solutions with $L^1$ data that satisfy one sided Lipschitz conditions. Still, the results here add new insight also for Burgers--Poisson: The $L^2$ setting is more natural (albeit harder) to work in due to the dispersive right-hand side. For the $L^2$ norm of a solution is guaranteed to be non-increasing in time, which can be used to deduce a one-sided \textit{smoothing} effect of \eqref{eq: theMainEquation}. In particular, our Corollary \ref{cor: lipschitzSmoothingOfBurgersPoisson} shows that the one-sided Lipschitz coefficients of solutions of Burgers--Poisson can at worst behave like `$\mathrm{const} + 1/t$' whereas the corresponding expression in \cite{MR3527628} takes the form $O(te^{t}+1/t)$. An interesting consequence of having an explicit smoothing effect of \eqref{eq: theMainEquation}, as given by Theorem \ref{thm: oneSidedHolderRegularity}, is that it provides a necessary condition on terminal data when seeking to solve the backward problem; this is exploited in the proof of Corollary \ref{cor: maximalLifespanForClassicalSolutions} which bounds the lifespan of classical solutions of \ref{eq: theMainEquation}.

We give a brief discussion of our results which are presented in Section \ref{sec: mainResults}.
The first main result, Theorem \ref{thm: wellPosedness}, provides existence, uniqueness and $L^2$ stability for entropy solutions of \eqref{eq: theMainEquation} -- as defined by Def. \ref{def: entropySolution} -- for initial data in $L^2\cap L^\infty(\R)$. Corollary \ref{cor: extensionToPureL2Data} then extends this result in a unique and continuous manner to pure $L^2$ data. The results are proved in Section \ref{sec: wellPosedness}. There, uniqueness and stability is proved through a variation of Kru\v{z}kov's doubling of variables technique \cite{1970SbMat..10..217K}, while existence follows from an operator splitting argument. While there are less laborious approaches for proving existence (fixed point methods, vanishing viscosity), operator splitting has the advantage of allowing for a straight forward analysis of the regularizing effect of \eqref{eq: theMainEquation} which constitutes the second part of our results.

The second main result, Theorem \ref{thm: oneSidedHolderRegularity}, guarantees one-sided Hölder regularity for entropy solutions of \eqref{eq: theMainEquation}, and it is proved in Section \ref{sec: longTermBounds}. Like the classical Ole\v{ı}nik estimate \eqref{eq: classicalOleinikEstimateForBurgersSolutions} for Burgers' equation, this one-sided regularity \textit{improves} over time. The proof is based on an operator splitting approach, used to study the evolution of the quantity $\omega(t,h)\coloneqq \sup_{x}(u(t,x+h)-u(t,x))$, for $t,h>0$ and a solution $u$. As seen by Lemma \ref{lem: generalEvolutionOfOleinikEstimateUnderBurgersSolutionMap}, the nonlinearity in \eqref{eq: theMainEquation} has a smoothing effect on $\omega$. The dispersive term on the other hand has no clear convenient effect on $\omega$, and it is instead treated as a source term that we limit using the non-increasing $L^2$ norm of $u$ (as done when combining Lemma \ref{lem: generalEvolutionOfOleinikEstimateUnderConvolutionSolutionMap} and \ref{lem: theBoundOnSupFromL2Norm}). 

The result has two interesting consequences. First, Corollary \ref{cor: decayingHeightBoundForEntropySolutions} provides an explicit height bound for a solution $u$ in terms of $\norm{K}{L^1(\R)}$, $\norm{u_0}{L^2(\R)}$ and the time $t$. This bound decays initially like $1/t^{\frac{1}{3}}$ and converges to a positive constant for large times. Generally, the height of a solution will not tend to zero due to the existence of solitary waves \cite{MR2979975} for several instances of \eqref{eq: theMainEquation}. Second, Corollary \ref{cor: maximalLifespanForClassicalSolutions} bounds the lifespan of classical solutions of \eqref{eq: theMainEquation} provided the initial data satisfies a skewness condition \eqref{eq: skewnessCondition}. One may wonder how these classical solutions break down, and wave breaking is the natural candidate. A proof of this is beyond the scope of this paper, but not hard to obtain; demonstrating that \eqref{eq: theMainEquation} is classically well posed for times $t\lesssim -1/\inf_x u_0'(x)$ (the hyperbolic lifespan) would leave wave breaking as the only type of blow up (as we already have height bounds). We point out that our skewness condition \eqref{eq: skewnessCondition} differs from that of both \cite{MR3527628} and \cite{MR1668586}; neither imply the other.
 
 \subsection{Other dispersive equations}
 Central questions in the study of water wave model equations include well-posedness, persistence and non-persistence of solutions, the latter two exemplified by solitary and breaking waves. The answers depend intricately on the type of nonlinearity and dispersive term featured in the equation. In the case of a quadratic nonlinearity, the fractional Korteweg--de Vries  equation (fKdV)
\begin{align}\label{eq: defFractionalKDV}
    u_t + \tfrac{1}{2}(u^2)_x = (|D|^{\b}u)_x
\end{align}
where $\F(|D|^\b u)=|\xi|^\b \hat{u}$ and $\b\in\R$, has been suggested \cite{MR3188389} as a scale for studying how the strength of the dispersion affects the questions of well-posedness and water-wave features. To connect \eqref{eq: theMainEquation} to the fKdV setting, observe that our assumption on $G$ implies that $\widehat{G}(\xi)=o(|\xi|^{-1})$ as $|\x|\to\infty$ and so one may place \eqref{eq: theMainEquation} in the region $\b<-1$ for fKdV. However, $\widehat{G}$ will in our case be bounded, whereas $|\xi|^{\b}$ blows up at zero, and thus \eqref{eq: theMainEquation} can not match the low-frequency effect of negative order fKdV which assigns (very) high velocities to (very) low frequencies. This qualitative difference disappears in a periodic setting; the dispersion of fKdV on the torus is for $\b<-1$ precisely of the form assumed in \eqref{eq: theMainEquation}. We point out that the methods in this paper can be carried out on the torus; our results can thus be extended to periodic solutions of fKdV for $\b<-1$. With the relation between \eqref{eq: theMainEquation} and \eqref{eq: defFractionalKDV} accounted for, we now summarize a few results for the latter to sketch what one may expect of well-posedness and water-wave features in our case.

The fractional KdV equation of order $\b\in (\frac{6}{7},2]$ is globally well-posed in appropriate function spaces. The regions $\beta\in(\frac{6}{7},1)$ and $\b\in(1,2)$ are treated in \cite{MR3906854} and \cite{MR2754070} respectively, and there are numerous works on the well posedness for $\b=1$ (Benjamin--Ono equation) and $\b=2$ (KdV equation); see for example \cite{MR2291918} and \cite{MR3990604} and the references therein. For values $\b\leq \frac{6}{7}$ only local well-posedness results have been established \cite{MR3906854,MR3995034}. Still, numerical investigation \cite{MR3317254} suggests that fKdV is globally well-posed for dispersion as weak as $\b>\frac{1}{2}$, but not for $\b\leq \frac{1}{2}$; this is also conjectured in \cite{MR3188389}. One might expect the culprit of this loss of global well-posedness for weak dispersion, to be the appearance of breaking waves (shock formation), i.e. bounded solutions that develop infinite slope in finite time. In the negative order regime $\b<0$ this might be true: the occurrence of breaking waves have been proved for the case $\b=-2$ (Ostrovsky--Hunter equation) by \cite{MR2684307}, for the case $\b=-1$ (Burgers--Hilbert) by \cite{yang2020shock} and for the region $\b\in(-1,-\frac{1}{3})$ by \cite{MR3682673}. However, no such results exist in the positive order regime $\b>0$, and it is believed that instead other blowup phenomena occur in the range $\b\in(0,\frac{1}{2}]$ inhibiting global well-posedness; see the discussion in \cite{MR3317254, MR3188389} or \cite{MR3694646} where an example of $L^\infty$ blowup in finite time is constructed for the modified Benjamin--Ono equation. In the absence of classical global solutions, several authors have for the $\b<0$ regime turned to the concept of entropy solutions. Adapted from the study of hyperbolic conservation laws, entropy solutions are weak solutions that satisfy extra conditions -- the entropy inequalities -- automatically satisfied by classical solutions (whenever they exist). This solution concept allows for continuation past wave breaking and a global well-posedness theory may then be achieved. In \cite{MR3273174} existence and uniqueness of global entropy solutions for the Ostrovsky--Hunter equation ($\b=-2$) is established for appropriate initial data. Similarly, \cite{MR3248030} provides global entropy solutions for the Burgers--Hilbert equation $(\b=-1)$ and a partial uniqueness result. And as mentioned above, the Burgers--Poisson equation \eqref{eq: burgersPoissonEquation} is in \cite{MR3527628} shown to admit unique global entropy solutions for $L^1$ initial data. There, the authors also provide sufficient conditions on the initial data leading to wave breaking. This equation is not an isolated instance of \eqref{eq: theMainEquation} featuring wave breaking; \cite{MR1668586} shows that the phenomenon is present whenever $G\in C\cap L^1(\R)$ is symmetric and monotone on $\R^+$. More generally, our Corollary \ref{cor: maximalLifespanForClassicalSolutions} hints that every instance of \eqref{eq: theMainEquation} features wave breaking as explained above.

\subsection{The entropy formulation}\label{sec: entropyFormulation}
 We define the concept of entropy solutions on the function class $L^\infty_{\text{loc}}([0,\infty), L^\infty(\R))$, which here denotes the subspace of $L^\infty_\loc([0,\infty)\times\R)$ of functions $u(t,x)$ that are essentially bounded on $[0,T]\times\R$ for each $T>0$. We will in Section \ref{sec: mainResults} be more liberal in our definition of entropy solutions (allowing then for $L^2$ initial data) as explained after Corollary \ref{cor: extensionToPureL2Data}.
 
Necessary is the notion of an entropy pair $(\eta,q)$ for \eqref{eq: theMainEquation}, which is to say that
 \begin{align*}
     \eta\colon\R\to\R \text{ is smooth and convex, while } q'(u)=\eta'(u)u.
 \end{align*}
\begin{definition}\label{def: entropySolution}
For bounded initial data $u_0\in L^\infty(\R)$, we say that a function $u\in L^\infty_{\text{loc}}([0,\infty), L^\infty(\R))$ is an entropy solution of \eqref{eq: theMainEquation} if:
\begin{enumerate}
    \item  it satisfies for all non-negative $\varphi\in C^\infty_c(\R^+\times\R)$ and all entropy pairs $(\eta,q)$ of \eqref{eq: theMainEquation} the entropy inequality
\begin{equation}\label{eq: entropyInequality}
    \begin{split}
        &\int_0^\infty\int_{\R} \eta(u)\varphi_t+q(u)\varphi_x +\eta'(u) (K\ast u)\varphi \,\mathrm dx\mathrm dt\geq 0,
    \end{split}
\end{equation}
\item it assumes the initial data in $L^1_{\text{loc}}$ sense, that is
\begin{align*}
    \esslim_{t\searrow0}\int_{-r}^r |u(t,x)-u_0(x)| \mathrm{d}x=0,
\end{align*}
for all $r>0$.
\end{enumerate}
\end{definition}
The concept of entropy solutions lies between that of strong and weak solutions.
If $u\in L^\infty_{\text{loc}}([0,\infty), L^\infty(\R))\cap C^1(\R^+\times\R)$ is a classical solution of \eqref{eq: theMainEquation} then it is necessarily an entropy solution as multiplying \eqref{eq: theMainEquation} with $\eta'(u)\varphi$ and integrating by parts yields \eqref{eq: entropyInequality} as an equality. And if $u$ is an entropy solution of \eqref{eq: theMainEquation} then it is necessarily a weak solution as follows from considering the two entropy pairs $(\eta(u),q(u))= (u,\frac{1}{2}u^2)$ and $(\eta(u),q(u))=(-u,-\frac{1}{2}u^2)$ respectively.

\subsection{A fractional variation}
The exponents of the one-sided Hölder conditions provided by Theorem \ref{thm: oneSidedHolderRegularity} depend on the regularity of $K=G'$; the smoother $K$ is, the higher the exponent. More precisely, we attain the Hölder exponent $\frac{1+s}{2}$ if $|K|_{TV^s}<\infty$ where the latter seminorm is for $s\in [0,1]$ defined by 
\begin{align}\label{eq: definitionOfFractionalVariance}
    |K|_{TV^s} = \sup_{h>0}\frac{\norm{K(\cdot+h)-K}{L^1(\R)}}{h^s}.
\end{align}
When $s=1$ this seminorm coincides with the classical total variation of $K$, while $s=0$ gives twice the $L^1$ norm of $K$, and thus we necessarily have $|K|_{TV^0}<\infty$ as we assume $K\in L^1(\R)$. For $s\in(0,1)$ the seminorm is a measure of intermediate regularity between $L^1(\R)$ and $BV(\R)$. This seminorm does not coincide with the scaling invariant fractional variation from \cite{Love1937} used in \cite{MR3312048} to attain maximal smoothing effects for one-dimensional scalar conservation laws.

\section{Main results}\label{sec: mainResults}
We here present the two main results, Theorem \ref{thm: wellPosedness} and Theorem \ref{thm: oneSidedHolderRegularity} and corresponding corollaries. For a general discussion of the content given here, see the end of the above introduction. We start with Theorem \ref{thm: wellPosedness}, which provides a global well-posedness theory for entropy solutions of \eqref{eq: theMainEquation} with initial data in $L^2\cap L^\infty(\R)$. The theorem is established in Section \ref{sec: wellPosedness}.

\begin{theorem}\label{thm: wellPosedness}
For every initial data $u_0\in L^2\cap L^\infty(\R)$ there exists a unique entropy solution $u$ of \eqref{eq: theMainEquation}. The mapping $t\mapsto u(t)$ is continuous from $[0,\infty)$ to $L^2(\R)$ and $u(t)$ satisfies for all $t\geq0$ the bounds
\begin{align}\label{eq: boundsForWellPosednessThm}
     \norm{u(t)}{L^2(\R)}\leq& \norm{u_0}{L^2(\R)}, & \norm{u(t)}{L^\infty(\R)}\leq& e^{t\k}\norm{u_0}{L^\infty(\R)},
\end{align}
where $\k\coloneqq \norm{K}{L^1(\R)}$. Moreover, we have the following stability result: if two sequences $(t_k)_{k\in\N}\subset [0,\infty)$ and $(u_{0,k})_{k\in\N}\subset L^2\cap L^\infty(\R)$ admit the limits
\begin{align*}
    \lim_{k\to\infty}t_k=t, \quad \text{ and }\quad \lim_{k\to\infty}u_{0,k}=u_0 \quad \text{in }L^2(\R),
\end{align*}
where $u_0\in L^2\cap L^\infty(\R)$, then the corresponding entropy solutions satisfy
\begin{align*}
   \lim_{k\to\infty} u_k(t_k)= u(t) \quad \text{in }L^2(\R).
\end{align*}
\end{theorem}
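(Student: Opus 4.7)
The three claims (existence, uniqueness, $L^2$-stability) would be handled in that order, by largely independent arguments.

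For existence I would follow the splitting scheme used in \cite{MR3527628} for Burgers--Poisson, decomposing \eqref{eq: theMainEquation} into the Burgers equation $u_t+\tfrac12(u^2)_x=0$ and the linear non-local ODE $u_t=K\ast u$ (using $(G\ast u)_x=K\ast u$ since $K=G'$). On a time-mesh of size $\tau$, concatenating the two sub-flows produces an approximate solution $u^\tau$. The Burgers step is handled by classical Kružkov theory and preserves $L^\infty\cap L^2$; the ODE step is the explicit semigroup $e^{\tau K\ast}$, which picks up a factor $e^{\tau\k}$ in $L^\infty$ by Grönwall and is an $L^2$-isometry since $K$ is odd and hence the bilinear form $\int u(K\ast u)\,dx$ vanishes. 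These uniform bounds together with equicontinuity in $t$ (in, say, a weighted $L^1$ space) let me extract via Kolmogorov--Riesz a limit $u$, and the two approximate entropy inequalities — Kružkov's for the Burgers substep and the smooth identity $\partial_t\eta(u)=\eta'(u)(K\ast u)$ for the ODE substep — pass to the limit to give \eqref{eq: entropyInequality}.

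For uniqueness I would apply Kružkov's doubling of variables to two entropy solutions $u(t,x)$ and $v(s,y)$: treating each as the ``constant'' in the other's entropy inequality, summing, and collapsing the diagonal $(s,y)\to(t,x)$ yields
\[
\partial_t |u-v|+\partial_x\!\left[\tfrac12\sgn(u-v)(u^2-v^2)\right]\;\leq\;\sgn(u-v)\,\big(K\ast(u-v)\big)
\]
in distribution. The main obstacle is the non-local right-hand side: $u-v$ need not lie in $L^1(\R)$, so the naive bound $\k\|u-v\|_{L^1}$ is unavailable and the usual sign-cancellation does not annihilate it. I would therefore test against a weight $w$ decaying at infinity (e.g.\ $w(x)=e^{-\lambda|x|}$ or an algebraic analogue) for which the operator bound $\int|K\ast f|\,w\,dx\le C\int|f|\,w\,dx$ holds; integration by parts in the flux term produces a contribution controlled by $(\|u\|_\infty+\|v\|_\infty)\int|u-v|w\,dx$, and Grönwall then yields the weighted $L^1$-contraction $\int|u(t)-v(t)|w\,dx\le e^{Ct}\int|u_0-v_0|w\,dx$, from which uniqueness follows immediately.

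The bounds in \eqref{eq: boundsForWellPosednessThm} are inherited from the splitting approximants, or read off directly from \eqref{eq: entropyInequality} with $\eta(u)=u^2$ ($L^2$-bound, using once again that $2\int u(K\ast u)\,dx=0$ by oddness of $K$) and a running barrier $\eta_t(u)=(u-Me^{t\k})_+^2$ with $M=\|u_0\|_{L^\infty}$ ($L^\infty$-bound). For the stability claim, the weighted $L^1$-contraction gives $u_k(t_k)\to u(t)$ in $L^1_\loc$, and interpolation against the uniform $L^\infty$ bound upgrades this to $L^2_\loc$. To pass to $L^2(\R)$ I would establish an $L^2$-tightness estimate: testing \eqref{eq: theMainEquation} against $u\chi_R$ for a smooth cutoff $\chi_R$ supported in $|x|\ge R$, the Burgers part produces a commutator bounded by $\|u\|_\infty^2\|\chi_R'\|_{L^1}$ while the non-local part is controlled in terms of $\|K\|_{L^1}$ and $\|u\|_{L^2}$, so that $\int_{|x|>R}u_k(t,x)^2\,dx$ is uniformly controlled in $t$ and $k$ by the corresponding quantity of $u_{0,k}$. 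Combined with the $L^2$-tightness of $u_{0,k}\to u_0$, this yields $u_k(t_k)\to u(t)$ in $L^2(\R)$; the continuity $t\mapsto u(t)\in L^2$ is the special case $u_{0,k}=u_0$, $t_k\to t$.
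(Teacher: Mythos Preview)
Your existence and uniqueness arguments are in the same spirit as the paper's, with two technical differences worth noting. First, the paper's splitting uses the first-order map $S^K_\e f = f + \e K\ast f$ rather than the full semigroup $e^{\e K\ast}$; this is cosmetic. Second, and more substantively, the fixed weights you propose ($e^{-\lambda|x|}$ or algebraic) require $|K|\ast w \le Cw$, which for the exponential weight needs $\int |K(y)|e^{\lambda|y|}\,dy<\infty$ and for an algebraic weight needs quantitative tail decay of $K$---neither is assumed. The paper instead builds a time-dependent weight $w_M^r(t,\cdot)=e^{t|K|\ast}\chi_M^r(t,\cdot)$ (with $\chi_M^r$ the indicator of a cone $|x|<r+Mt$) precisely so that it is a subsolution of the dual equation $\phi_t+M|\phi_x|+|K|\ast\phi\le 0$; this works for any $K\in L^1$.

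The genuine gap is in your stability argument. The hypothesis is that $u_{0,k}\to u_0$ in $L^2(\R)$ only, so $\|u_{0,k}\|_{L^\infty}$ is in general \emph{unbounded}. Consequently there is no ``uniform $L^\infty$ bound'' available for the interpolation step, the constant in your weighted $L^1$-contraction (which carries a factor depending on $\|u\|_\infty+\|v\|_\infty$ through the flux speed) blows up along the sequence, and your tightness commutator $\|u\|_\infty^2\|\chi_R'\|_{L^1}$ is likewise uncontrolled. The paper closes this gap by invoking the height bound of Corollary~\ref{cor: decayingHeightBoundForEntropySolutions}, namely $\|u(t)\|_{L^\infty}\lesssim (1+t^{-1/3})\|u_0\|_{L^2}^{2/3}$, which is derived from the one-sided H\"older regularity of Section~\ref{sec: longTermBounds} and depends only on $\|u_0\|_{L^2}$, not on $\|u_0\|_{L^\infty}$. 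With this in hand, for $t>0$ the solutions $u_k(t_k)$ are uniformly bounded in $L^\infty$ once $t_k\ge t/2$, and the paper reruns both the weighted $L^1$-contraction (Lemma~\ref{lem: L1ContractionExploitingHeigthBound}) and the tightness estimate (Lemma~\ref{lem: tightnessOfL2Solutions}) with the flux speed $M$ replaced by the time-dependent $m(t)\sim \|u_0\|_{L^2}^{2/3}(1+t^{-1/3})$. The case $t=0$ is handled separately via a weak--strong argument. Without this detour through Section~\ref{sec: longTermBounds}, the stability claim as stated does not follow from the tools you list.
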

 The following corollary is an extension of the result to a pure $L^2$ setting which is proved at the end of Subsection \ref{sec: L2ContinuityAndStability}.
\begin{corollary}[Global $L^2$ well-posedness]\label{cor: extensionToPureL2Data}
Equation \eqref{eq: theMainEquation} is globally well-posed for $L^2(\R)$ initial data in the following sense: The solution map $S\colon (t,u_0)\mapsto u(t)$ mapping $L^2\cap L^\infty(\R)$ initial data to the corresponding entropy solution evaluated at time $t\geq0$, extends uniquely to a jointly continuous map $S\colon[0,\infty)\times L^2(\R)\to L^2(\R)$. In particular, the $L^2$-bound, -continuity and -stability of Theorem \ref{thm: wellPosedness} carries over to all weak solutions provided by $S$. Moreover, for any $u_0\in L^2(\R)$, the corresponding weak solution $u(t,x)\coloneqq S(t,u_0)(x)$ is locally bounded in $(0,\infty)\times \R$ and satisfies the entropy inequalities \eqref{eq: entropyInequality}.
\end{corollary}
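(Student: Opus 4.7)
The plan is to extend $S$ by density from $L^2\cap L^\infty$ to $L^2$ and then verify joint continuity, the $L^2$ bound, local boundedness on $(0,\infty)\times\R$, and the entropy inequalities. For any $u_0\in L^2(\R)$, approximate by $u_{0,n}\coloneqq u_0\mathbbm{1}_{\{|u_0|\leq n\}}\in L^2\cap L^\infty(\R)$, so $u_{0,n}\to u_0$ in $L^2(\R)$, and set $u_n(t)\coloneqq S(t,u_{0,n})$. The crux is that $\{u_n(t)\}_n$ is Cauchy in $L^2(\R)$, uniformly for $t$ in any bounded interval. For this I would extract from the proof of the stability part of Theorem \ref{thm: wellPosedness} a quantitative bound of the form
\[
\|u_n(t)-u_m(t)\|_{L^2(\R)}\leq \Phi\bigl(t, \|u_{0,n}-u_{0,m}\|_{L^2(\R)}, \|u_{0,n}\|_{L^2(\R)}, \|u_{0,m}\|_{L^2(\R)}\bigr),
\]
with $\Phi$ continuous, vanishing when its second argument vanishes, and crucially independent of $\|u_{0,n}\|_{L^\infty}$ — the remark preceding the statement signals precisely that the weighted $L^1$-contraction together with the $L^2$ tightness estimate used to derive the stability claim does not see the height of the data. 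Granted such an estimate, $S(t,u_0)\coloneqq\lim_n u_n(t)\in L^2(\R)$ exists; interlacing two approximating sequences shows the limit is independent of the choice, and $S$ extends the previous map since one may take $u_{0,n}=u_0$ for $u_0\in L^2\cap L^\infty$.

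Joint continuity follows by splitting
\[
\|S(t_k,u_{0,k}) - S(t,u_0)\|_{L^2(\R)} \leq \|S(t_k,u_{0,k}) - S(t_k,u_0)\|_{L^2(\R)} + \|S(t_k,u_0) - S(t,u_0)\|_{L^2(\R)}.
\]
The first term vanishes uniformly in $t_k$ bounded by the Lipschitz-type bound on $u_0\mapsto S(t,u_0)$ inherited by density from the Cauchy estimate; the second vanishes by continuity of $t\mapsto S(t,u_0)$, which itself comes from the uniform-in-$t$ convergence of the bounded approximations together with the time-continuity asserted in Theorem \ref{thm: wellPosedness} for each $u_{0,n}$. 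Uniqueness of the continuous extension is immediate by density, and the $L^2$ bound $\|S(t,u_0)\|_{L^2(\R)}\leq \|u_0\|_{L^2(\R)}$ passes to the limit.

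For local boundedness on $(0,\infty)\times\R$, I would invoke Corollary \ref{cor: decayingHeightBoundForEntropySolutions}: its height estimate depends only on $K$, $\|u_{0,n}\|_{L^2}$ and $t>0$, so the $u_n$ are uniformly bounded on every strip $[\varepsilon,T]\times\R$ with $0<\varepsilon<T$, and this bound is inherited by $u=S(\cdot,u_0)$. For the entropy inequalities, fix an entropy pair $(\eta,q)$ and a non-negative $\varphi\in C_c^\infty(\R^+\times\R)$ with $\supp\varphi\subset [\varepsilon,T]\times[-R,R]$; dominated convergence yields $u_n\to u$ in $L^2((\varepsilon,T)\times\R)$ and uniform boundedness of $u_n$ on $\supp\varphi$ then gives $\eta(u_n),q(u_n),\eta'(u_n)\to \eta(u),q(u),\eta'(u)$ in $L^2(\supp\varphi)$, while $K\ast u_n\to K\ast u$ in $L^2(\R)$ by Young's inequality; passing to the limit in \eqref{eq: entropyInequality} for $u_n$ produces it for $u$. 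The principal obstacle is thus the first step — distilling an $L^\infty$-independent, Cauchy-type stability estimate from the proof of Theorem \ref{thm: wellPosedness} — after which the subsequent density and passage-to-limit arguments are routine.
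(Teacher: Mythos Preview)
Your later steps---local boundedness via the height bound of Corollary~\ref{cor: decayingHeightBoundForEntropySolutions}, and passing to the limit in the entropy inequalities using uniform boundedness on $\supp\varphi$---match the paper exactly. The divergence is at your first step, and it is a genuine obstacle rather than a routine extraction.

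The paper does \emph{not} produce an $L^\infty$-independent $L^2$-Cauchy estimate uniform on $[0,T]$. The tools you would combine are (i) the tightness estimate (Lemma~\ref{lem: tightnessOfL2Solutions}), which is uniform in $t$, and (ii) an $L^1_{\mathrm{loc}}$-contraction (Lemma~\ref{lem: L1ContractionExploitingHeigthBound}) upgraded to $L^2$ on $[-r,r]$ via an $L^\infty$ bound. But the only height bound independent of $\|u_0\|_{L^\infty}$ is \eqref{eq: heightBoundWhenSIsZero}, which blows up as $t\searrow 0$; and indeed the $t\to 0$ case of Proposition~\ref{prop: stabilityOfEntropySolutionsWithL2Data} is handled by a separate weak-convergence argument, not a quantitative bound. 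So a $\Phi$ of the form you posit, continuous down to $t=0$, is not available from the machinery.

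The paper sidesteps this by first taking the limit in the weaker topology $C([0,\infty),L^1_{\mathrm{loc}}(\R))$, where Lemma~\ref{lem: L1ContractionExploitingHeigthBound} \emph{does} give a uniform-in-$t$, $L^\infty$-independent Cauchy estimate directly. Once the limit $u$ exists in that space, Fatou gives the $L^2$ bound, the height bound carries over, and---crucially---Lemmas~\ref{lem: L1ContractionExploitingHeigthBound} and~\ref{lem: tightnessOfL2Solutions} themselves extend to $u$ by approximation. With those in hand, the entire proof of Proposition~\ref{prop: stabilityOfEntropySolutionsWithL2Data} (including the separate $t=0$ argument) runs verbatim for the extended solutions, yielding joint $L^2$-continuity of $S$ on $[0,\infty)\times L^2(\R)$. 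So the fix to your outline is: replace the hoped-for $L^2$-Cauchy estimate by the $L^1_{\mathrm{loc}}$ one, and obtain the $L^2$ continuity afterwards by rerunning Proposition~\ref{prop: stabilityOfEntropySolutionsWithL2Data} rather than by uniform convergence.
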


For the remainder of the section, we broaden the definition of an entropy solution: for $u_0\in L^2(\R)$ we say that $u$ is the corresponding entropy solution of \eqref{eq: theMainEquation} if, and only if, $u(t)=S(t,u_0)$, where $S$ is as in the previous corollary.

The second theorem infers one-sided Hölder regularity for the entropy solutions. The Hölder exponent depends on the regularity of $K=G'$, here measured using the fractional variation $|K|_{TV^s}$ defined in \eqref{eq: definitionOfFractionalVariance}. The theorem is proved in Section \ref{sec: longTermBounds}.

\begin{theorem}[One-sided Hölder regularity]\label{thm: oneSidedHolderRegularity} 
For initial data $u_0\in L^2(\R)$, let $u$ be the corresponding entropy solution of \eqref{eq: theMainEquation}, and let $s\in[0,1]$ be such that $|K|_{TV^s}<\infty$. Then $u$ satisfies the one-sided Hölder condition
\begin{align}\label{eq: oneSidedHolderCondition}
    u(t,x)-u(t,y)\leq a(t)(x-y)^{\frac{1+s}{2}},
\end{align}
for all $x\geq y$ and $t>0$, where the Hölder coefficient $a(t)$ is given by 
\begin{equation}\label{eq: holderCoefficient}
    a(t)=C_1(s) |K|_{TV^s}^{\frac{2+s}{3+2s}}\norm{u_0}{L^2(\R)}^{\frac{1+s}{3+2s}} + C_2(s)\frac{\norm{u_0}{L^2(\R)}^{\frac{1-s}{3}}}{t^{\frac{2+s}{3}}},
\end{equation}
for two constants $C_1(s)$ and $C_2(s)$ written out in \eqref{eq: C1AndC2}.
\end{theorem}
 Since $u(t)$ is in $L^2(\R)$ it is not necessarily true that $x\mapsto u(t,x)$ is well defined pointwise; in the previous theorem we have identified $u(t)$ with its left-continuous representation which exists due to Lemma \ref{lem: modulusOfGrowthImpliesLeftAndRightLimits}. 
 
 Since $K\in L^1(\R)$ and $|K|_{TV^0}=2\norm{K}{L^1(\R)}$, the $s=0$ case of Theorem \ref{thm: oneSidedHolderRegularity} is valid for any instance of \eqref{eq: theMainEquation}. In particular, entropy solutions of \eqref{eq: theMainEquation} are guaranteed to admit one-sided Hölder regularity of order $\frac{1}{2}$. In the case of the Burgers--Poisson equation, where $K =\tfrac{1}{2}\sgn(x)e^{-|x|}$ we find $|K|_{TV^1}=|K|_{TV}=2$ and so by the $s=1$ case of Theorem \ref{thm: oneSidedHolderRegularity} we get the following corollary.
 \begin{corollary}[One-sided Lipschitz smoothing of Burgers--Poisson]\label{cor: lipschitzSmoothingOfBurgersPoisson}
 For initial data $u_0\in L^2(\R)$, let $u$ be the corresponding entropy solution of the Burgers--Poisson equation \eqref{eq: burgersPoissonEquation}. Then $u$ satisfies the one-sided Lipschitz condition
 \begin{align*}
      u(t,x)-u(t,y)\leq&\,\bigg[12^{\frac{1}{5}}\norm{u_0}{L^2(\R)}^{\frac{2}{5}} + \frac{1}{t}\bigg](x-y),
 \end{align*}
 for all $x\geq y$ and $t>0$.
 \end{corollary}
While it was already established in \cite{MR3527628} that entropy solutions of the Burgers--Poisson equation are one-sided Lipschitz continuous, our result has the advantage of a Lipschitz coefficient that decreases with time. 

 We conclude this section with two less obvious corollaries of Theorem \ref{thm: oneSidedHolderRegularity}: a decaying height bound for entropy solutions of \eqref{eq: theMainEquation} and a maximal lifespan estimate for classical solutions.

\begin{corollary}[Height bound]\label{cor: decayingHeightBoundForEntropySolutions}
For initial data $u_0\in L^2(\R)$, let $u$ be the corresponding entropy solution of \eqref{eq: theMainEquation}. Then for all $t>0$ we have the height bound
\begin{align}\label{eq: heightBoundWhenSIsZero}
    \norm{u(t)}{L^\infty(\R)}\leq \Bigg[2^{\frac{11}{12}}3^{\frac{1}{3}}\norm{K}{L^1(\R)}^{\frac{1}{3}} + \frac{2^{\frac{5}{4}}}{t^{\frac{1}{3}}}\Bigg] \norm{u_0}{L^2(\R)}^{\frac{2}{3}}.
\end{align}
\end{corollary}
\begin{proof}\renewcommand{\qedsymbol}{}
See Appendix \ref{sec: proofsOfCorollaries}.
\end{proof}
Observe that together, the two height bounds \eqref{eq: boundsForWellPosednessThm} and \eqref{eq: heightBoundWhenSIsZero} imply that when $u_0\in L^2\cap L^\infty(\R)$ the corresponding entropy solution of \eqref{eq: theMainEquation} is globally bounded. 

For the final result of the section, we need to introduce the following seminorm
\begin{align}
    [u_0]_{s} \coloneqq \esssup_{\substack{x\in \R\\ h>0}}\bigg[\frac{u_0(x-h)-u_0(x)}{h^{\frac{1+s}{2}}}\bigg],
\end{align}
which is a (left) one-sided Hölder seminorm of exponent $\frac{1+s}{2}$.
\begin{corollary}[Maximal lifespan]\label{cor: maximalLifespanForClassicalSolutions}
There are universal constants $C,c>0$ such that: 
if initial data $u_0\in L^2\cap L^\infty(\R)$ satisfies the skewness condition
\begin{align}\label{eq: skewnessCondition}
    [u_0]_s^{3+2s}> c |K|_{TV^s}^{2+s}\norm{u_0}{L^2(\R)}^{1+s},
\end{align}
for some $s\in[0,1]$ such that $|K|_{TV^s}<\infty$,
then the lifespan $T$ of a classical solution $u\in L^\infty\cap C^1((0,T)\times \R)$ of \eqref{eq: theMainEquation} admitting $u_0$ as initial data must satisfy
\begin{align}\label{eq: asymptoticLifespandBound}
    T < C\Bigg[\frac{\norm{u_0}{L^2(\R)}^{1-s}}{[u_0]_s^{3}}\Bigg]^{\frac{1}{2+s}}.
\end{align}
\end{corollary}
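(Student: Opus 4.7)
The plan is to exploit the time-reversal symmetry of \eqref{eq: theMainEquation}. Let $u\in L^\infty\cap C^1((0,T)\times\R)$ be a classical solution with initial data $u_0$, and define $v(t,x)\coloneqq u(T-t,-x)$. A direct computation, using that $G$ is even, shows that $(G\ast v)(t,x)=(G\ast u)(T-t,-x)$; combined with the chain rule this makes $v$ a classical solution of \eqref{eq: theMainEquation} on $(0,T)\times\R$ as well. Moreover, multiplying \eqref{eq: theMainEquation} by $u$ and integrating shows the $L^2$ norm is conserved along classical flows (the nonlinear term is a perfect derivative, and the dispersive term vanishes because $K=G'$ is odd so $v\mapsto K\ast v$ is skew-adjoint); hence $\|v(0)\|_{L^2(\R)}=\|u(T)\|_{L^2(\R)}=\|u_0\|_{L^2(\R)}$.

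Next I apply Theorem \ref{thm: longTermBounds} to $v$. Since $v$ is classical it is an entropy solution, and by the uniqueness in Theorem \ref{thm: wellPosedness} it coincides with the weak solution provided by Corollary \ref{cor: extensionToPureL2Data} with initial datum $v(0,\cdot)\in L^2(\R)$. Thus for every $t\in(0,T]$ and $x\geq y$ one has $v(t,x)-v(t,y)\leq a(t)(x-y)^{(1+s)/2}$, with $a(t)$ expressed through $|K|_{TV^s}$ and $\|u_0\|_{L^2(\R)}$ as in \eqref{eq: holderCoefficient}. Evaluating at $t=T$ and substituting $v(T,\cdot)=u_0(-\cdot)$, the change of variables $x\mapsto -x$, $h=y-x>0$ turns this into $u_0(x-h)-u_0(x)\leq a(T)h^{(1+s)/2}$ for all $x\in\R$ and $h>0$. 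Taking an essential supremum therefore gives the key estimate
\begin{align*}
    [u_0]_s\leq a(T)=C_1(s)|K|_{TV^s}^{\frac{2+s}{3+2s}}\|u_0\|_{L^2(\R)}^{\frac{1+s}{3+2s}}+C_2(s)\frac{\|u_0\|_{L^2(\R)}^{\frac{1-s}{3}}}{T^{\frac{2+s}{3}}}.
\end{align*}

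To conclude, I will choose the universal constant $c$ in \eqref{eq: skewnessCondition} so that $c^{1/(3+2s)}\geq 2C_1(s)$ for every $s\in[0,1]$; this is possible because $C_1(s)$ is uniformly bounded on $[0,1]$. The skewness condition then forces the first summand in $a(T)$ to be strictly smaller than $\tfrac{1}{2}[u_0]_s$, so that the second summand must exceed $\tfrac{1}{2}[u_0]_s$. Solving this inequality for $T$ yields
\begin{align*}
    T<\Bigl(2C_2(s)\Bigr)^{\frac{3}{2+s}}\Bigg[\frac{\|u_0\|_{L^2(\R)}^{1-s}}{[u_0]_s^{3}}\Bigg]^{\frac{1}{2+s}},
\end{align*}
and setting $C\coloneqq\sup_{s\in[0,1]}(2C_2(s))^{3/(2+s)}$, finite by boundedness of $C_2(s)$ on $[0,1]$, delivers \eqref{eq: asymptoticLifespandBound}.

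The only genuinely delicate step is the first one: I must make sure that $v$ is truly a classical solution admissible to Theorem \ref{thm: longTermBounds}, which requires verifying both the equation symmetry (checking that the convolution transforms correctly under $x\mapsto -x$ hinges crucially on $G$ being even) and the $L^2$ conservation for classical flows. Everything afterwards is a matter of applying the already-established theorem and arranging explicit constants.
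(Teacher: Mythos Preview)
Your proof is correct and follows essentially the same time-reversal strategy as the paper: define $v(t,x)=u(T-t,-x)$, identify it with the global entropy solution for its initial datum, apply Theorem~\ref{thm: longTermBounds} at $t=T$ to bound $[u_0]_s$ by $a(T)$, and solve for $T$. The only cosmetic differences are that the paper keeps a free parameter $\rho\in(0,1)$ (you effectively set $\rho=\tfrac12$, which suffices for the stated corollary), and the paper obtains $\|v_0\|_{L^2}=\|u_0\|_{L^2}$ by applying the entropy $L^2$-bound \eqref{eq: boundsForWellPosednessThm} in both directions rather than by direct $L^2$-conservation for classical flows.
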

\begin{proof}\renewcommand{\qedsymbol}{}
See Appendix \ref{sec: proofsOfCorollaries}.
\end{proof}

\section{Well posedness of entropy solutions}\label{sec: wellPosedness}
In this section, we provide for \eqref{eq: theMainEquation} a global well-posedness theory of entropy solutions as defined by Def. \ref{def: entropySolution}. In particular, the content of Theorem \ref{thm: wellPosedness} follows from Proposition \ref{prop: uniquenessAndL1Contraction}, Corollary \ref{cor: generalSolutionsAreLimitsOfBVSolutions} and Proposition \ref{prop: stabilityOfEntropySolutionsWithL2Data}; see the summary at the beginning of Subsection \ref{sec: L2ContinuityAndStability}. Corollary \ref{cor: extensionToPureL2Data} is also proved here at the end of Subsection \ref{sec: L2ContinuityAndStability}. For entropy solutions of \eqref{eq: theMainEquation}, the proofs of existence and uniqueness is the same for $L^2\cap L^\infty$ data as for $L^\infty$ data; only the $L^1$ setting allows for `shortcuts'. Thus for generality, many results in the two coming subsections will be presented for initial data $u_0\in L^\infty(\R)$. We also note that in these two subsections only Lemma \ref{lem: theApproximateSolutionMapConservesTheL2Norm} exploits the dispersive nature of \eqref{eq: theMainEquation}, that is, that $K=G'$ is odd.

\subsection{Uniqueness of entropy solutions}\label{sec: uniquenessOfEntropySolutions}  It is natural to start with the proof of uniqueness, as this equips us with a weighted $L^1$-contraction that can further be used in the existence proof. The involved weight $w_M^r(t,x)$ can be interpreted as a bound on the propagation of information for solutions of \eqref{eq: theMainEquation}. Its technical role in the coming proof is to serve as a subsolution of a dual equation, namely the one obtained from setting the square bracket in \eqref{eq: katoInequality} to zero. A similar method can be found in \cite{MR2305729} where nonlocal conservation laws are treated. 

The weight is constructed as follows. 
Writing $|K|$ to denote the function $x\mapsto |K(x)|$, we introduce for a parameter $t\geq0$ the operator $e^{t|K|\ast}$ mapping $L^p(\R)$ to itself for any $p\in[1,\infty]$, defined by
\begin{align}\label{eq: definitionOfExponentialConvolutionOperator}
    \Big(e^{t|K|\ast}f\Big)(x)= f(x) + \sum_{n=1}^\infty \Big((|K|\ast)^{ n}f\Big)(x)\frac{t^n}{n!},
\end{align}
where $(|K|\ast)^{n}$ represents the operation of convolving with $|K|$ repeatedly $n$ times. Observe that by repeated use of Young's convolution inequality we have for any $p\in[1,\infty]$ and $f\in L^p(\R)$
\begin{align}\label{eq: boundnessOfExponentialConvolutionOperator}
    \norm{e^{t|K|\ast}f}{L^p(\R)}\leq e^{t\k}\norm{f}{L^p(\R)},
\end{align}
where $\k\coloneqq \norm{K}{L^1(\R)}$.
For parameters $r,M\geq 0$, we further introduce
\begin{align}\label{eq: definitionOfTheCharacteristicFunction}
    \chi_M^r(t,x) = \begin{cases}
    1, & |x|<r+Mt,\\
    0, & \text{else},
    \end{cases}
\end{align}
and set
\begin{align}\label{eq: definitionOfTheWeight}
    w_M^r(t,x)=\Big(e^{t|K|\ast}\chi_M^r(t,\cdot)\Big)(x).
\end{align}
By \eqref{eq: boundnessOfExponentialConvolutionOperator}, this weight satisfies for $p\in[1,\infty]$ the bound
\begin{align}\label{eq: boundOnTheWeightForTheL1Contraction}
    \norm{w_M^r(t,\cdot)}{L^p(\R)}\leq e^{t\k} (2r+2Mt)^{\frac{1}{p}},
\end{align}
where the case $p=\infty$ is evaluated in a limit sense. Thus, $w_M^r(t,\cdot)\in L^1\cap L^\infty(\R)$ for all $t,r,M\geq 0$. With $w_M^r$ defined, we are ready to state Proposition \ref{prop: uniquenessAndL1Contraction} establishing the uniqueness of entropy solutions.
Although the following result is stated to hold for a.e. $t\geq 0$, it can be extended to all $t\geq 0$, as we later prove that entropy solutions of \eqref{eq: theMainEquation} are continuous when viewed as $L^1_{\loc}(\R)$-valued time-dependent functions. 

\begin{proposition}\label{prop: uniquenessAndL1Contraction}
Let $u,v\in L^\infty_{\mathrm{loc}}([0,\infty),L^\infty(\R))$ be entropy solutions of \eqref{eq: theMainEquation} with $u_0,v_0\in L^\infty(\R)$ as initial data. Then, for any $r>0$ and a.e. $t\geq 0$ we have the weighted $L^1$-contraction
\begin{align}\label{eq: fullL1Contraction}
\int_{-r}^{r}|u(t,x)-v(t,x)|dx\leq \int_{-\infty}^\infty |u_0(x)-v_0(x)|w_M^r(t,x)dx,
\end{align}
where $w_M^r$ is given by \eqref{eq: definitionOfTheWeight}, and $M$ is any parameter satisfying
\begin{align}\label{eq: definitionOfM}
   M\geq \frac{\norm{u}{L^\infty([0,t]\times\R)}+\norm{v}{L^\infty([0,t]\times\R)}}{2}.
\end{align}
Thus, there is at most one entropy solution of \eqref{eq: theMainEquation} for each $u_0\in L^\infty(\R)$.
\end{proposition}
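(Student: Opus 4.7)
The plan is to adapt Kru\v{z}kov's doubling of variables to the nonlocal setting of \eqref{eq: theMainEquation} and couple it with a duality argument in which $w_M^r$ plays the role of a supersolution of a backward auxiliary equation. Uniqueness then falls out of \eqref{eq: fullL1Contraction} by setting $u_0 = v_0$.

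First, I would derive a weak Kato-type inequality for $|u-v|$. Using the Kru\v{z}kov entropies $\eta(u) = |u - k|$ (as limits of smooth convex approximations), I apply \eqref{eq: entropyInequality} for $u$ at $(t,x)$ with parameter $k = v(s,y)$ and symmetrically for $v$ at $(s,y)$ with $k = u(t,x)$. Testing both against $\varphi(t,x,s,y) = \psi\bigl(\tfrac{t+s}{2},\tfrac{x+y}{2}\bigr)\rho_\e\bigl(\tfrac{t-s}{2}\bigr)\rho_\e\bigl(\tfrac{x-y}{2}\bigr)$, summing, and letting $\e \to 0$ so the mollifier derivatives in $(t-s)$ and $(x-y)$ cancel in the sum, one arrives at
\begin{align*}
\int_0^\infty \!\!\int_\R |u-v|\psi_t + \sgn(u-v)\tfrac{u^2-v^2}{2}\psi_x + \sgn(u-v)[(K\ast u) - (K\ast v)]\psi\, \dd x\, \dd t \geq 0
\end{align*}
for all $\psi \in C^\infty_c(\R^+\times\R)$ with $\psi \geq 0$; the nonlocal source passes through the doubling argument unchanged because it carries no $x$-derivative. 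Applying the pointwise bound $\sgn(u-v)(x)[(K\ast u)(x) - (K\ast v)(x)] \leq (|K|\ast|u-v|)(x)$, Fubini together with the evenness of $|K|$ (a consequence of $G$ being even), and the identity $\sgn(u-v)\tfrac{u^2-v^2}{2} = |u-v|\tfrac{u+v}{2}$, the inequality reduces to
\begin{align*}
\int_0^\infty \!\!\int_\R |u-v|\bigl[\psi_t + \tfrac{u+v}{2}\psi_x + |K|\ast \psi\bigr]\, \dd x\, \dd t \geq 0.
\end{align*}

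Second, for fixed $T > 0$ I set $\phi(t,x) := w_M^r(T-t, x)$ and test with $\psi(t,x) = \phi(t,x)\theta_\e(t)$, where $\theta_\e$ smoothly approximates $\chi_{[0,T]}$. In the limit $\e \to 0$ the concentration of $\theta_\e'$ combined with the $L^1_\loc$ initial condition from Def.~\ref{def: entropySolution}(2) (and restricting to $T$ in the Lebesgue set of $t \mapsto \int |u(t) - v(t)|\phi(t,x)\,\dd x$) produces the boundary terms $\int|u_0 - v_0|\phi(0,\cdot)\,\dd x - \int |u(T) - v(T)|\phi(T,\cdot)\,\dd x$. The key property to check is that the remaining volume integrand is nonpositive, which, since $\tfrac{u+v}{2}\phi_x \leq M|\phi_x|$ by \eqref{eq: definitionOfM}, reduces to verifying
\begin{align*}
\partial_\tau w_M^r \geq M|\partial_x w_M^r| + |K|\ast w_M^r \qquad (\tau = T-t \geq 0).
\end{align*}
Differentiating the series \eqref{eq: definitionOfExponentialConvolutionOperator} gives $\partial_\tau w_M^r = |K|\ast w_M^r + e^{\tau|K|\ast}(\partial_\tau \chi_M^r)$; distributionally $\partial_\tau \chi_M^r = M[\delta_{-(r+M\tau)} + \delta_{r+M\tau}] = M|\partial_x \chi_M^r|$, and the nonnegativity of the Taylor coefficients in \eqref{eq: definitionOfExponentialConvolutionOperator} yields $|e^{\tau|K|\ast}f| \leq e^{\tau|K|\ast}|f|$, so $e^{\tau|K|\ast}(M|\partial_x \chi_M^r|) \geq M|e^{\tau|K|\ast}\partial_x \chi_M^r| = M|\partial_x w_M^r|$, which is the required inequality. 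These distributional manipulations are most safely carried out after replacing $\chi_M^r$ by a smooth $|x|$-non-increasing approximation and passing to the limit at the end.

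Combining all ingredients, the volume integrand is nonpositive while $\phi(T,\cdot) = \chi_{[-r,r]}$ and $\phi(0,\cdot) = w_M^r(T,\cdot)$, so the Kato inequality collapses to
\begin{align*}
\int_{-r}^{r} |u(T,x) - v(T,x)|\, \dd x \leq \int_\R |u_0(x) - v_0(x)|\, w_M^r(T,x)\, \dd x,
\end{align*}
which is \eqref{eq: fullL1Contraction}. Uniqueness is then immediate: if $u_0 = v_0$ then the right-hand side vanishes for every $r > 0$, forcing $u(T) = v(T)$ a.e. on $\R$ for a.e. $T \geq 0$. The main technical obstacle will be the rigorous verification of the supersolution property of $w_M^r$: the moving jump discontinuities of $\chi_M^r$ and the nontrivial interaction between the $|\cdot|$-operation and the convolution semigroup $e^{\tau|K|\ast}$ both demand the smoothing-and-limit procedure indicated above. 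The extension of $\phi$ to a valid test function (which is merely integrable in $x$ by \eqref{eq: boundOnTheWeightForTheL1Contraction} rather than compactly supported) is a routine localization in $x$ combined with the finite propagation speed built into $w_M^r$.
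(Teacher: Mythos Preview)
Your proposal is correct and follows essentially the same route as the paper: Kru\v{z}kov doubling to a Kato inequality, then testing against $\theta_\e(t)$ times a weight that is a subsolution of the adjoint operator, with the weight built from $e^{\tau|K|\ast}$ acting on (a smooth approximation of) $\chi_M^r$. The paper carries out the smoothing first---writing $g(t,x)=f(|x|+M(t-T))$ for smooth non-increasing $f$ and setting $\phi=e^{(T-t)|K|\ast}g(t,\cdot)$, then letting $f\to\mathbbm{1}_{(-\infty,r)}$ at the very end---whereas you state the target $\phi=w_M^r(T-t,\cdot)$ directly and defer the smoothing; the supersolution computation you sketch ($\partial_\tau w_M^r=|K|\ast w_M^r+e^{\tau|K|\ast}\partial_\tau\chi_M^r$ together with $e^{\tau|K|\ast}|h|\geq|e^{\tau|K|\ast}h|$) is exactly the paper's calculation $\phi_t+|K|\ast\phi=e^{(T-t)|K|\ast}g_t=-M e^{(T-t)|K|\ast}|g_x|\leq -M|\phi_x|$ written in the reversed time variable.
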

\begin{proof}
 We begin by reformulating \eqref{eq: entropyInequality} in terms of the Kru\v{z}kov entropies; parameterized over $k\in\R$, they are given by $(\eta_k(u),q_k(u))=(|u-k|,F(u,k))$ where
 \begin{align*}
     F(u,k)\coloneqq \tfrac{1}{2}\sgn(u-k)(u^2-k^2).
 \end{align*}
These entropy pairs lack the required smoothness, but are still applicable in \eqref{eq: entropyInequality} as they can be smoothly approximated. Indeed, consider for $\d>0$ and $k\in\R$ the entropy pairs $\eta_{k}^{\d}(u)=\sqrt{(u-k)^2+\d^2}$ and $q_{k}^{\d}(u)=\int_k^u(\eta_{k}^{\d})'(y)y\mathrm{d}y$. As we have the pointwise limits
\begin{align*}
    \lim_{\d\to0}\eta^\d_k(u)=\,&|u-k|, &\lim_{\d\to0}q^\d_k(u)=\,&F(u,k), &\lim_{\d\to0}(\eta^\d_k)'(u)=\,&\sgn(u-k),
\end{align*}
we can substitute $(\eta,q)\mapsto(\eta^\d_k,q_k^\d)$ in \eqref{eq: entropyInequality} and let $\d\to0$ to conclude through dominated convergence that $u$ satisfies
\begin{align}\label{eq: kruzkovEntropyInequalities}
        0\leq\int_{0}^\infty\int_{\R} |u-k|\varphi_t+F(u,k)\varphi_x +\sgn(u-k) (K\ast u)\varphi \mathrm dx\mathrm dt,
\end{align}
for all $k\in\R$ and all non-negative $\varphi\in C_c^\infty(\R^+\times\R)$. For brevity, we set $U=\R^+\times \R$ for use throughout the proof. Let $\psi\in C^\infty_c(U\times U)$ be non-negative, and consider $u$ and $v$ as functions in $(t,x)$ and $(s,y)$ respectively. For fixed $(s,y)\in U$, we can in \eqref{eq: kruzkovEntropyInequalities} insert the test-function $\varphi\colon (t,x)\mapsto \psi(t,x,s,y)$ and the constant $k=v(s,y)$ so to obtain
\begin{equation}\label{eq: entropyInequalityForUWithVAsK}
    \begin{split}
             0\leq&\,\int_{U} |u-v|\psi_t+F(u,v)\psi_x + \sgn(u-v) (K\ast_x u)\psi \mathrm dx\mathrm dt,
    \end{split}
\end{equation}
where we write $K\ast_x u$ to stress that the operator $K\ast$ is applied  with respect to the $x$-variable. As \eqref{eq: entropyInequalityForUWithVAsK} holds for all $(s,y)\in U$ we can integrate \eqref{eq: entropyInequalityForUWithVAsK} over $(s,y)\in U$ giving
\begin{equation}\label{eq: integratedEntropyInequalityForUWithVAsK}
    \begin{split}
             0\leq&\,\int_U\int_{U} |u-v|\psi_t+F(u,v)\psi_x+\sgn(u-v) (K\ast_x u)\psi \mathrm dx\mathrm dt\mathrm dy\mathrm ds.
    \end{split}
\end{equation}
Swapping the roles of $u(t,x)$ and $v(s,y)$ we similarly find
\begin{equation}\label{eq: integratedEntropyInequalityForVWithUAsK}
    \begin{split}
             0\leq&\,\int_U\int_{U} |u-v|\psi_s+F(v,u)\psi_y+\sgn(v-u) (K\ast_y v)\psi \mathrm dx\mathrm dt\mathrm dy\mathrm ds.
    \end{split}
\end{equation}
As $F(u,v)=F(v,u)$ and $\sgn(v-u)=-\sgn(u-v)$ we can add \eqref{eq: integratedEntropyInequalityForUWithVAsK} to \eqref{eq: integratedEntropyInequalityForVWithUAsK} so to further obtain
\begin{equation}\label{eq: integrateCombineddEntropyInequalityWithUAndV}
    \begin{split}
             0\leq&\,\int_U\int_{U} |u-v|(\psi_t+\psi_s)+F(u,v)(\psi_x+\psi_y)\mathrm dx\mathrm dt\mathrm dy\mathrm ds\\
             &\,+\int_U\int_U\sgn(u-v) (K\ast_x u - K\ast_y v)\psi \mathrm dx\mathrm dt\mathrm dy\mathrm ds.
    \end{split}
\end{equation}
Let $\rho\in C^\infty_c(\R^2)$ be non-negative and satisfy $\norm{\rho}{L^1(\R^2)} = 1$, and let $\rho_\e$ denote the expression
\begin{align*}
    \rho_\e = \rho_\e(t-s,x-y) = \frac{1}{\e^2}\rho\bigg(\frac{t-s}{\e},\frac{x-y}{\e}\bigg),
\end{align*}
for $\e>0$. For a fixed $T\in (0,\infty)$, we further let $\varphi$ denote a non-negative element of $ C^\infty_c((0,T)\times\R)$ and set
\begin{align*}
    \psi(t,x,s,y)=\varphi(t,x)\rho_\e(t-s,x-y),
\end{align*}
or simply $\psi=\varphi\rho_\e$ for short. Note that, for $\e>0$ sufficiently small, this $\psi$ is non-negative, smooth and of compact support in $U\times U$; in particular, it satisfies the prior assumptions posed on it. Using that $(\partial_t+\partial_s)\rho_\e=0=(\partial_x+\partial_y)\rho_\e$, we can conclude
\begin{align*}
    (\psi_t+\psi_s)=&\,  \varphi_t\rho_\e, & (\psi_x+\psi_y)=&\, \varphi_x\rho_\e ,
\end{align*}
and so inserting for $\psi$ in \eqref{eq: integrateCombineddEntropyInequalityWithUAndV} we get
\begin{equation}\label{eq: integrateCombineddEntropyInequalitySpecialTestFunction}
    \begin{split}
             0\leq&\,\int_U\int_{U} \Big[|u-v|\varphi_t+F(u,v)\varphi_x\Big]\rho_\e\mathrm dx\mathrm dt\mathrm dy\mathrm ds\\
             &\,+\int_U\int_U\sgn(u-v) (K\ast_x u - K\ast_y v)\varphi\rho_\e \mathrm dx\mathrm dt\mathrm dy\mathrm ds.
    \end{split}
\end{equation}
We now wish to `go to the diagonal' by taking $\limsup_{\e\to0}$ of \eqref{eq: integrateCombineddEntropyInequalitySpecialTestFunction}; for simplicity we study each line separately.
For the first one we pick $M\in(0,\infty)$ satisfying the inequality \eqref{eq: definitionOfM} with $T$ replacing $t$,
and use $(u^2-v^2)=(u+v)(u-v)$ to compute
\begin{equation}\label{eq: goingToTheDiagonalFirstPart}
    \begin{split}
         &\,\int_U\int_{U} \Big[|u-v|\varphi_t+F(u,v)\varphi_x\Big]\rho_\e\mathrm dx\mathrm dt\mathrm dy\mathrm ds\\
    \leq &\,\int_U\int_{U} |u-v|\Big[\varphi_t+M|\varphi_x|\Big]\rho_\e\mathrm dx\mathrm dt\mathrm dy\mathrm ds\\
    \leq &\,\int_U |u(t,x)-v(t,x)|\Big[\varphi_t+M|\varphi_x|\Big]\mathrm dx\mathrm dt\\
    &\, + \int_U\int_{U} |v(t,x)-v(s,y)| \Big[\varphi_t+M|\varphi_x|\Big] \rho_\e \mathrm dx\mathrm dt\mathrm dy\mathrm ds.
    \end{split}
\end{equation}
As $\rho_\e(t-s,x-y)$ is supported in the region $|(t-s,x-y)|\leq \e$ and satisfies $\norm{\rho_\e}{L^1(\R^2)}=1$, the very last integral in \eqref{eq: goingToTheDiagonalFirstPart} is bounded by
\begin{align*}
    \sup_{|(\epsilon,\delta)|\leq \e}\int_U |v(t,x)-v(t+\epsilon, x+ \delta)|\Big[\varphi_t+M|\varphi_x|\Big] \mathrm{d}x\dd t \to 0,\quad \e\to0,
\end{align*}
where the limit holds as translation is a continuous operation on $L^1_{\loc}(\R)$ and $\varphi\in C^\infty_c((0,T)\times \R)$. Thus we have established
\begin{equation}\label{eq: goingToTheDiagonalLimitOfFirstPart}
    \begin{split}
         &\,\limsup_{\e\to0}\int_U\int_{U} \Big[|u-v|\varphi_t+F(u,v)\varphi_x\Big]\rho_\e\mathrm dx\mathrm dt\mathrm dy\mathrm ds\\
    \leq &\,\int_U |u(t,x)-v(t,x)|\Big[\varphi_t+M|\varphi_x|\Big]\mathrm dx\mathrm dt.
    \end{split}
\end{equation}
Turning our attention to the second line of \eqref{eq: integrateCombineddEntropyInequalitySpecialTestFunction}, we start by observing
\begin{equation*}
    \begin{split}
        &\,\int_U\int_U\sgn(u-v) (K\ast_x u - K\ast_y v)\varphi\rho_\e \mathrm dx\mathrm dt\mathrm dy\mathrm ds\\
        \leq &\,\int_U\int_U\int_\R |K(z)||u(t,x-z) - v(s,y-z)|\varphi(t,x) \rho_\e(t-s,x-y)\mathrm dz \mathrm dx\mathrm dt\mathrm dy\mathrm ds\\
        = &\,\int_U\int_U\int_\R |K(z)||u(t,x) - v(s,y)|\varphi(t,x+z)\rho_\e(t-s,x-y) \mathrm dz \mathrm dx\mathrm dt\mathrm dy\mathrm ds\\
        = &\, \int_U\int_U |u - v|\Big[|K|\ast_x\varphi\Big]\rho_\e \mathrm dx\mathrm dt\mathrm dy\mathrm ds,
    \end{split}
\end{equation*}
where the third line holds by the substitution $(x,y)\mapsto(x+z,y+z)$ and the last by the symmetry of $z\mapsto|K(z)|$. By similar reasoning used to attain \eqref{eq: goingToTheDiagonalFirstPart}, we conclude
\begin{equation}\label{eq: goingToTheDiagonalLimitOfSecondPart}
    \begin{split}
        &\,\limsup_{\e\to0}\int_U\int_U\sgn(u-v) (K\ast_x u - K\ast_y v)\rho_\e\varphi \mathrm dx\mathrm dt\mathrm dy\mathrm ds\\
        \leq&\,\int_U|u(t,x)-v(t,x)|(|K|\ast\varphi) \mathrm dx\mathrm dt,
    \end{split}
\end{equation}
Combining \eqref{eq: integrateCombineddEntropyInequalitySpecialTestFunction} with \eqref{eq: goingToTheDiagonalLimitOfFirstPart} and \eqref{eq: goingToTheDiagonalLimitOfSecondPart}, yields the inequality
\begin{align}\label{eq: katoInequality}
    0\leq \int_U|u-v|\Big[\varphi_t+M|\varphi_x| + |K|\ast\varphi\Big] \mathrm dx\mathrm dt,
\end{align}
where both $u$ and $v$ are now functions in $(t,x)$. By density, we may extend \eqref{eq: katoInequality} to hold for all non-negative $\varphi\in W^{1,1}_0((0,T)\times\R)$. Thus, we can set $\varphi(t,x)=\theta(t)\phi(t,x)$ for two non-negative functions $\theta\in W^{1,1}_0((0,T))$ and $\phi\in W^{1,1}((0,T)\times \R)$ where we note that $\phi$ need not vanish at $t=0$ and $t=T$. In doing so, \eqref{eq: katoInequality} yields
\begin{align}\label{eq: katoInequalityTwo}
    0\leq \int_U|u-v|\theta'\phi\dd x\dd t + \int_U|u-v|\theta\Big[\phi_t+M|\phi_x| + |K|\ast\phi\Big] \mathrm dx\mathrm dt,
\end{align}
To rid ourselves of the second integral, we now construct a particular $\phi$ such that the square bracket in \eqref{eq: katoInequalityTwo} is non-positive in $(0,T)\times \R$. Let $f\colon\R\to[0,1]$ be smooth, non-increasing and satisfy $f(x)=1$ for $x\leq 0$ and $f(x)=0$ for sufficiently large $x$, and define
\begin{align}\label{eq: definitionOfG}
    g(t,x)=f(|x| + M (t-T)).
\end{align}
It is readily checked that $g\in C^\infty_c([0,T]\times\R)$.
We now define the function $\phi$ to be
\begin{equation}\label{eq: definitionOfPhi}
    \begin{split}
           \phi(t,x) =&\, \Big(e^{(T-t)|K|\ast}g(t,\cdot)\Big)(x),
    \end{split}
\end{equation}
where we used the operator defined in \eqref{eq: definitionOfExponentialConvolutionOperator}. Observe that $\phi$ is non-negative and smooth on $[0,T]\times \R$ with integrable derivatives; this last part follows when using \eqref{eq: boundnessOfExponentialConvolutionOperator}. That the square bracket in \eqref{eq: katoInequalityTwo} is non-positive, can be seen as follows: note first from \eqref{eq: definitionOfG} that
\begin{align*}
    g_t(t,x)=&\,M f'(|x|+M (t-T)),\\
    g_x(t,x)=&\, \sgn(x)f'(|x|+M (t-T)).
\end{align*}
As $f'$ is non-positive, we find $g_t=-M |g_x|$. Thus, using \eqref{eq: definitionOfPhi} we calculate for $t\in(0,T)$
\begin{align*}
    \phi_t + |K|\ast \phi =&\,  e^{(T-t)|K|\ast}g_t, \\
    =&\, - M\Big( e^{(T-t)|K|\ast}|g_x|\Big),\\
    \leq&\,  - M \Big|e^{(T-t)|K|\ast}g_x\Big|\\
    =&\,- M |\phi_x|,
\end{align*}
where the last equality holds as differentiation commutes with convolution. In conclusion, the second integral in \eqref{eq: katoInequalityTwo} is non-positive. Next, for a small parameter $\epsilon>0$ we set $\theta=\theta_\epsilon$ where $\theta_\epsilon$ is given by
\begin{equation}\label{eq: thetaWhichApproximatesInterval}
    \begin{split}
           \theta_\epsilon(t)=
    \begin{cases}
    t/\epsilon, &t\in(0,\epsilon),\\
    1, &t\in(\epsilon,T-\epsilon),\\
    (T-t)/\epsilon, &t\in(T-\epsilon,T).
    \end{cases}
    \end{split}
\end{equation} 
Inserting this in \eqref{eq: katoInequalityTwo}, removing the non-positive integral and letting $\epsilon\to0$, we conclude
\begin{equation}\label{eq: L1ContractionOnAverage}
    \begin{split}
        &\,\liminf_{\epsilon\to0}\int_{T-\epsilon}^T\bigg( \int_\R|u(t,x)-v(t,x)| \phi(t,x) \mathrm dx\bigg)\frac{\mathrm dt}{\epsilon}\\
        \leq&\, \limsup_{\epsilon\to0}\int_{0}^\epsilon\bigg( \int_\R|u(t,x)-v(t,x)| \phi(t,x) \mathrm dx\bigg)\frac{\mathrm dt}{\epsilon}\\
    \end{split}
\end{equation}
where we moved the negative term over to the left-hand side. As $u$ and $v$ are bounded on $(0,T)\times \R$ and continuous at $t=0$ in $L^1_\loc$ sense, it is easy to see that $|u(t,\cdot)-v(t,\cdot)|\phi(t,\cdot)\to |u_0(\cdot)-v_0(\cdot)|\phi(0,\cdot)$ in $L^1(\R)$ when $t\to0$ since the same is true for $\phi(t,x)$ and $\phi(0,x)$. Thus the right-hand side of \eqref{eq: L1ContractionOnAverage} is given by
\begin{align*}
    \limsup_{\epsilon\to0}\int_{0}^\epsilon\bigg( \int_\R|u(t,x)-v(t,x)| \phi(t,x) \mathrm dx\bigg)\frac{\mathrm dt}{\epsilon} = \int_{\R}|u_0-v_0|\phi(0,x)\dd x.
\end{align*}
 As for the left-hand side, we wish to apply the Lebesgue differentiation theorem so to get convergence for a.e. $T>0$, but this can not be directly done due to the implicit $T$-dependence of $\phi$. Instead, we observe from \eqref{eq: definitionOfG} and \eqref{eq: definitionOfPhi} that $\phi(T,x)=g(T,x)=f(|x|)$ where the latter function is independent of $T$. Since $\varphi(t,\cdot)\to f(|\cdot|)$ in $L^1(\R)$ as $t\to T$, the boundness of $u$ and $v$ means that $|u(t,\cdot)-v(t,\cdot)|(\varphi(t,\cdot)-f(|\cdot|))\to0$ in $L^1(\R)$ as $t\to T$ and so we may estimate
\begin{align*}
      &\,\limsup_{\epsilon\to0}\int_{T-\epsilon}^{T}\bigg( \int_\R|u(t,x)-v(t,x)| \phi(t,x)\mathrm dx\bigg)\frac{\mathrm dt}{\epsilon}\\
      =&\,\limsup_{\epsilon\to0}\int_{T-\epsilon}^{T}\bigg( \int_{\R}|u(t,x)-v(t,x)|f(|x|) \mathrm dx\bigg)\frac{\mathrm dt}{\epsilon}\\
      =&\, \int_{\R}|u(T,x)-v(T,x)| f(|x|)\mathrm dx,\quad \text{a.e. } T\geq 0,
\end{align*}
where the last equality used the Lebesgue differentiation theorem. Thus we conclude from \eqref{eq: L1ContractionOnAverage} that we for a.e. $T\geq 0$ have
\begin{equation}\label{eq: atTheEndOfL1Contraction}
    \begin{split}
          &\,\int_{\R}|u(T,x)-v(T,x)|f(|x|)\dd x\\
    \leq&\, \int_{\R}|u_0(x)-v_0(x)|\Big(e^{T|K|\ast}f(|\cdot| - MT)\Big)(x)\dd x,
    \end{split}
\end{equation}
where we inserted for $\phi(0,x)$ using \eqref{eq: definitionOfG} and \eqref{eq: definitionOfPhi}. As $f$ was any smooth, non-negative, non-increasing function satisfying $f(x)=1$ for $x\leq0$ and $f(x)=0$ for sufficiently large $x$, we may in \eqref{eq: atTheEndOfL1Contraction} set  $f=\mathbbm{1}_{(-\infty,r)}$ through a standard approximation argument. Doing this, we observe that $f(|x|-MT) =\chi_M^r(T,x)$ where the latter is defined in \eqref{eq: definitionOfTheCharacteristicFunction}, and so we obtain from \eqref{eq: atTheEndOfL1Contraction} exactly \eqref{eq: fullL1Contraction},  with $T$ substituting for $t$. This concludes the proof.
\end{proof}
While we in this paper are concerned with global entropy solutions, one may wish to study entropy solutions on a time-bounded domain $(0,T)\times \R$. Such solutions would be defined as in Def. \ref{def: entropySolution}, but with the test-functions in \eqref{eq: entropyInequality} restricted to $C^\infty_c((0,T)\times\R)$. Still, no new solutions are attained this way: the uniqueness of entropy solutions on a time-bounded domain follows from the same argument as above, and thus an entropy solution on $(0,T)\times \R$ is the restriction of a global one which the following section establishes the existence of.

\subsection{Existence of entropy solutions}\label{sec: existenceOfEntropySolutions}
In this subsection, we prove the existence of an entropy solution of \eqref{eq: theMainEquation} for arbitrary initial data $u_0\in L^\infty(\R)$.
The strategy goes as follows: we first introduce for a parameter $\e>0$ an approximate solution map $S_{\e,t}\colon L^\infty(\R)\to L^\infty(\R)$ whose key properties are collected in Proposition \ref{prop: propertiesOfTheApproximateSolutionMap}. Next, we show in Lemma \ref{lem: approximateSolutionSatisfiesApproximateEntropySolution} that when $S_{\e,t}$ is applied to sufficiently regular initial data $u_0$, we attain approximate entropy solutions. Further, in Proposition \ref{prop: solutionWithBVDataIsLimitOfApproximateSolutions} we establish the convergence (as $\e\to0$) of these approximations to an entropy solution, and the result is extended to general $L^\infty$ data in Corollary \ref{cor: generalSolutionsAreLimitsOfBVSolutions}.

By an operator splitting argument, we aim to build entropy solutions of \eqref{eq: theMainEquation} from those of Burgers' equation, $u_t+\tfrac{1}{2}(u^2)_x=0$, and the linear convolution equation, $u_t= K\ast u$. On that note, we introduce two families of operators $(S_t^B)_{t\geq0}$ and $(S^K_t)_{t\geq0}$ parameterized over $t\geq 0$. The operator $S^B_t\colon L^\infty(\R)\to L^\infty(\R)$ is the solution map for Burgers' equation restricted to $L^\infty$ data at time $t$; that is,
\begin{align}\label{eq: definitionOfSolutionMapOfBurgersEquation}
   S^B_t\colon f \mapsto u^f(t,\cdot),
\end{align}
where $(t,x)\mapsto u^f(t,x)$ is the unique bounded entropy solution for the problem
\begin{align*}
    \begin{cases}
    u_t+\tfrac{1}{2}(u^2)_x = 0,  & (t,x)\in\R^+\times\R,\\
    u(0,x)=f(x),  &x\in\R.
    \end{cases}
\end{align*}
As demonstrated in \cite{MR2574377}, this solution lies in $C([0,\infty), L^1_{\text{loc}}(\R))$, the space of functions $u\in L^1_\loc([0,\infty)\times \R)$ such that $t\mapsto u(t,\cdot)$ is continuous from $[0,\infty)$ to $L^1_\loc(\R)$.
Note that $S^B_t$ is a flow map in the sense that $S^B_{t_1}\circ S^B_{t_2}=S^B_{t_1+t_2}$ for all $t_1,t_2\geq 0$. The second map $S^K_t\colon L^\infty(\R)\to L^\infty(\R)$ is for $t\geq0$ defined by
\begin{align}\label{eq: definitionOfSolutionMapOfConvolutionEquation}
    S^K_t\colon f \mapsto f+ tK\ast f.
\end{align}
The actual solution map for the equation $u_t= K\ast u$ is the operator $e^{t K\ast}$ defined similarly to \eqref{eq: definitionOfExponentialConvolutionOperator}; the reason we have instead chosen $S^K_t$ as \eqref{eq: definitionOfSolutionMapOfConvolutionEquation} (which can be seen as a first order approximation of $e^{tK\ast}$) is for our calculations to be slightly tidier. Note however, $S^K_t$ is not a flow mapping. With these two families of operators, we build a third family of operators $S_{\e,t}$: for fixed parameters $\e>0$ and $t\geq 0$, pick the unique pair $n\in\N_0$ and $s\in[0,\e)$ such that $t=s+n\e$, and define
\begin{align}\label{eq: definitionOfApproximationOfSolutionMap}
    S_{\e,t} =&\, S^B_s\circ \Big[ S^K_\e\circ S^B_\e\Big]^{\circ n},
\end{align}
where the notation $\circ n$ implies that the square bracket is composed with itself $(n-1)$ times; if $n=0$, then the square bracket should be replaced by the identity. We shall demonstrate that as $\e\to0$ the map $S_{\e,t}$ converges in an appropriate sense to the solution map for entropy solutions of \eqref{eq: theMainEquation}. We begin by collecting a few properties of $S_{\e,t}$ when applied to the space $BV(\R)$; this subspace of $L^1(\R)$ is equipped with the norm $\norm{\cdot}{BV(\R)}=\norm{\cdot}{L^1(\R)}+|\cdot|_{TV}$, where the total variation seminorm $|\cdot|_{TV}$ coincides with $|\cdot|_{TV^1}$ as defined in \eqref{eq: definitionOfFractionalVariance}. A short and effective discussion of $BV(\R)$ can be found in either \cite{MR2574377} or \cite{HoldenRisebro}; we note that functions in $BV(\R)$ have essential right and left limits at each point, and their height is bounded by their total variation, thus $BV(\R)\hookrightarrow L^1\cap L^\infty(\R)$.
\begin{proposition}\label{prop: propertiesOfTheApproximateSolutionMap}
With $S_{\e,t}$ as defined in \eqref{eq: definitionOfApproximationOfSolutionMap}, we have for all $\e>0$, $t\geq\tilde t\geq 0$, $f\in BV(\R)$ and $p\in [1,\infty]$
\begin{align*}
      \norm{S_{\e,t}(f)}{L^p(\R)}\leq&\, e^{t\kappa}\norm{f}{L^p(\R)}, & &\,(\text{$L^p$ bound}),\\
     \norm{S_{\e,t}(f)}{TV}\leq&\, e^{t\kappa}\norm{f}{TV},& &\,(\text{$TV$ bound}),\\
     \norm{S_{\e,t}(f)-S_{\e,\tilde t}(f)}{L^1(\R)}\leq&\, (t-\tilde t + \e)C_f(t) , & &\,(\text{Approximate time continuity}),
\end{align*}
 where $\kappa\coloneqq \norm{K}{L^1(\R)}$ and where the factor $C_f(t)$ only depends on $f$ and $t$.
\end{proposition}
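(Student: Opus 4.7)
The plan is to establish each bound by first proving the analogous estimate for the two elementary building blocks $S^B_t$ and $S^K_\e$, and then iterating through the factorization \eqref{eq: definitionOfApproximationOfSolutionMap} using $t = s + n\e$ with $s\in[0,\e)$.

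\textbf{The $L^p$ and $TV$ bounds.} Entropy solutions of Burgers' equation are well known to be $L^p$-contractions and $TV$-decreasing, so $\norm{S^B_t(f)}{L^p(\R)}\leq \norm{f}{L^p(\R)}$ and $|S^B_t(f)|_{TV}\leq |f|_{TV}$. For the convolution map, Young's inequality immediately gives $\norm{S^K_\e(f)}{L^p(\R)}\leq (1+\e\kappa)\norm{f}{L^p(\R)}$, and since $(K\ast f)' = K\ast f'$ in the sense of measures, a second application of Young yields $|S^K_\e(f)|_{TV}\leq (1+\e\kappa)|f|_{TV}$. Composing the $n$ building blocks in \eqref{eq: definitionOfApproximationOfSolutionMap} therefore produces in each case a multiplicative factor $(1+\e\kappa)^n\leq e^{n\e\kappa}\leq e^{t\kappa}$, which delivers both bounds at once.

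\textbf{Approximate time continuity.} This is the most delicate of the three claims. The function $\tau\mapsto S_{\e,\tau}(f)$ is continuous on each open subinterval $(k\e, (k+1)\e)$ (where only the Burgers' flow is acting), and exhibits a single discontinuity at each endpoint $\tau = k\e$ of size exactly $\e\,K\ast S^B_\e(g_{k-1})$, where I set $g_k := [S^K_\e\circ S^B_\e]^{\circ k}(f)$. The standard time modulus for entropy solutions of Burgers' equation,
\begin{align*}
    \norm{S^B_{\tau_1}(g)-S^B_{\tau_2}(g)}{L^1(\R)}\leq |\tau_1-\tau_2|\,\norm{g}{L^\infty(\R)}|g|_{TV},
\end{align*}
which follows from $u_t = -\tfrac{1}{2}(u^2)_x$ together with $|u^2|_{TV}\leq 2\norm{u}{L^\infty(\R)}|u|_{TV}$, bounds the $L^1$-variation on each continuous piece uniformly thanks to the already-established $L^\infty$ and $TV$ bounds. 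Similarly, each jump is bounded in $L^1$-norm by $\e\kappa\,\norm{g_{k-1}}{L^1(\R)}\leq \e\kappa\,e^{t\kappa}\norm{f}{L^1(\R)}$ via Young's inequality combined with the $L^1$ bound. Since at most $(t-\tilde t)/\e + 1$ jumps fall inside $(\tilde t, t]$, summing the continuous and jump contributions gives
\begin{align*}
    \norm{S_{\e,t}(f)-S_{\e,\tilde t}(f)}{L^1(\R)}\leq (t-\tilde t + \e)\Big[\kappa\, e^{t\kappa}\norm{f}{L^1(\R)} + e^{2t\kappa}\norm{f}{L^\infty(\R)}|f|_{TV}\Big],
\end{align*}
which is precisely the claimed approximate time continuity with $C_f(t) := \kappa\,e^{t\kappa}\norm{f}{L^1(\R)} + e^{2t\kappa}\norm{f}{L^\infty(\R)}|f|_{TV}$.

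The main obstacle will be the bookkeeping around these jumps: one must carefully pin down the location and size of each discontinuity in the piecewise trajectory $\tau\mapsto S_{\e,\tau}(f)$, fix a left- or right-continuous convention so that the identity $S_{\e,k\e}(f)=g_k$ makes sense, and then count how many jumps fall in the half-open interval $(\tilde t, t]$ for arbitrary $\tilde t\leq t$ (not necessarily of the form $k\e$). Once this accounting is in place, the two bounds established in the first step control everything uniformly and the estimate falls out.
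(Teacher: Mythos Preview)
Your proposal is correct and follows essentially the same approach as the paper: both establish the elementary $L^p$, $TV$, and $L^1$-time-modulus estimates for $S^B_t$ and $S^K_\e$ separately, then iterate through the factorization $S_{\e,t}=S^B_s\circ[S^K_\e\circ S^B_\e]^{\circ n}$. The only organizational difference is that the paper first treats the case $t-\tilde t\leq \e$ explicitly (distinguishing whether or not a jump is crossed) and then extends by the triangle inequality, whereas you count jumps in $(\tilde t,t]$ directly; the resulting constants $C_f(t)$ agree up to the harmless replacement of $\norm{f}{L^\infty(\R)}|f|_{TV}$ by $|f|_{TV}^2$ via the embedding $\norm{f}{L^\infty(\R)}\leq |f|_{TV}$ for $f\in BV(\R)$.
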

\begin{proof}
Consider $\e>0$ fixed. We will be using the following properties of the mappings $S^B_t$ and $S^K_t$ 
\begin{align}\label{eq: propertyOneOfBurgerMap}
    \norm{S^B_t(f)}{L^p(\R)}\leq&\, \norm{f}{L^p(\R)}, & \norm{S^K_t(f)}{L^p(\R)}\leq&\, e^{t\k}\norm{f}{L^p(\R)},\\\label{eq: propertyThreeOfBurgerMap}
     |S^B_t(f)|_{TV}\leq&\, |f|_{TV}, & |S^K_t(f)|_{TV}\leq&\, e^{t\k}|f|_{TV},\\\label{eq: propertyFourOfBurgerMap}
     \norm{S^B_t(f)-f}{L^1(\R)}\leq&\, t|f|_{TV}^2, & \norm{S^K_t(f)-f}{L^1(\R)}\leq&\, t\k\norm{f}{L^1(\R)},
\end{align}
valid for all $t\geq0$, $p\in[1,\infty]$ and $f\in BV(\R)$. The inequalities involving $S^B_t$ are well known and can be found for example in \cite{HoldenRisebro}. As for the inequalities involving $S^K_t$, these estimates follow directly from the definition of $S^K_t$ \eqref{eq: definitionOfSolutionMapOfConvolutionEquation} together with Young's convolution inequality and $1+t\k\leq e^{t\k}$. We start by proving the $L^p$ and $TV$ bound of the proposition. For this we fix $t\geq0$ and pick $n\in \N_0$ and $s\in[0,\e)$ such that $t=s+n\e$, and pick an arbitrary $f\in BV(\R)$. By iteration of the two inequalities in \eqref{eq: propertyOneOfBurgerMap} we attain
\begin{align}\label{eq: L1BoundForFTilde}
     \norm{S_{\e,t} (f)}{L^p(\R)}=\norm{S_{s}^{B}\circ[S^K_\e\circ S^B_\e]^{\circ n} (f)}{L^p(\R)}\leq e^{ n\e\kappa}\norm{f}{L^p(\R)},
\end{align}
for all $p\in[1,\infty]$, and by iteration of the inequalities in \eqref{eq: propertyThreeOfBurgerMap} we similarly get
\begin{align}  \label{eq: TVBoundForFTilde}
    | S_{\e,t}(f)|_{TV}=|S_{s}^{B}\circ[S^K_\e\circ S^B_\e]^{\circ n} (f)|_{TV}\leq e^{ n\e\kappa}|f|_{TV}.
\end{align}
This gives the first two bounds of the proposition. For the time continuity, we pick $\tilde t\in[0,t]$ and $\tilde n\in \N$ and $\tilde s\in[0,\e)$ such that $\tilde t=\tilde s+\tilde n\e$. Suppose first that $t-\tilde t\leq \e$, and set $\tilde f = S_{\e,\tilde n\e}(f)$. Then either $S_{\e,t}(f)= S_{s-\tilde s}^B(\tilde f)$ or $S_{\e,t}(f)= S^B_s\circ S^K_\e\circ S_{\e-\tilde s}^B(\tilde f)$ corresponding to the two situations $n=\tilde n$ and $n=\tilde n+1$; we will only deal with the latter as the other case is dealt with similarly. By the triangle inequality we then have
\begin{align*}
     \norm{S_{\e,t}(f)-S_{\e,\tilde t}(f)}{L^1(\R)}
    \leq &\, \norm{S^B_s\circ S_{\e}^K\circ S^B_{\e-\tilde s}(\tilde f)-S_{\e}^K\circ S^B_{\e-\tilde s}(\tilde f)}{L^1(\R)}\\
    &\,+\norm{S_{\e}^K\circ S^B_{\e-\tilde s}(\tilde f)-S^B_{\e-\tilde s}(\tilde f)}{L^1(\R)} + \norm{ S^B_{\e-\tilde s}(\tilde f)-\tilde f}{L^1(\R)}.
\end{align*}
The three terms on the right-hand side can be directly dealt with using the two inequalities \eqref{eq: propertyFourOfBurgerMap} followed by the estimates \eqref{eq: L1BoundForFTilde} and \eqref{eq: TVBoundForFTilde}. Doing so in a straight forward manner results in the bound
\begin{align*}
    se^{2n\e\k}|f|_{TV}^2 + \e\k e^{\tilde n\e\k}\norm{f}{L^1(\R)} + (\e-\tilde s)e^{2\tilde n\e\k}|f|_{TV}^2  \leq \e e^{2t\k}(2|f|_{TV}^2+\k\norm{f}{L^1(\R)}).
\end{align*}
Thus, setting for example $C_f(t)=e^{2t\k}(2|f|_{TV}^2+\k\norm{f}{L^1(\R)})$ the time continuity estimate holds whenever $t-\tilde t\leq \e$. By breaking any large time step into steps of size no larger than $\e$, the general case follows by the triangle inequality. 
\end{proof}
The $L^p$ bound provided by the previous proposition was attained by applying Young's convolution inequality on the operator $K\ast$; in doing so, we miss possible cancellations that might take place as $K$, after all, is an odd function. While efficient $L^p$ bounds might not be feasible for general $p\geq 1$, these cancellations are easily exploited for the $L^2$ norm as seen from the following lemma. This $L^2$ control is crucial for the analysis of Section \ref{sec: longTermBounds}.
\begin{lemma}\label{lem: theApproximateSolutionMapConservesTheL2Norm}
With $S_{\e,t}$ as defined in \eqref{eq: definitionOfApproximationOfSolutionMap}, we have for all $\e>0$, $t\geq 0$ and $f\in L^2\cap L^\infty(\R)$
\begin{align*}
    \norm{S_{\e,t}(f)}{L^2(\R)}\leq&\, e^{\frac{1}{2}\e t \kappa^2}\norm{f}{L^2(\R)},
\end{align*}
where $\kappa\coloneqq \norm{K}{L^1(\R)}$.
\end{lemma}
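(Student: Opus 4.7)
The plan is to exploit a hidden cancellation: since $G$ is even, its weak derivative $K=G'$ is odd, and consequently the convolution operator $K\ast$ is skew-adjoint on $L^2(\R)$. This will make the first-order term in $\|S^K_\e(f)\|_{L^2}^2$ vanish, yielding an $\mathcal{O}(\e^2)$ contribution per $S^K_\e$-step rather than the naive $\mathcal{O}(\e)$ one gets from Young's inequality alone. Iterating across the $\lfloor t/\e\rfloor$ splitting steps then gives a bound of size $e^{\frac{1}{2}\e t \kappa^2}$ instead of $e^{t\kappa}$.

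More concretely, I would first recall that Burgers' solution map is an $L^2$-contraction, $\|S^B_t(f)\|_{L^2(\R)}\leq \|f\|_{L^2(\R)}$, which is already the $p=2$ case of the first inequality in \eqref{eq: propertyOneOfBurgerMap}. Next I analyze $S^K_\e$: using that $K(-x)=-K(x)$ a.e., Fubini gives
\begin{equation*}
\langle f, K\ast f\rangle_{L^2(\R)} = \int_\R\int_\R K(x-y)f(x)f(y)\,\dd y\dd x = 0,
\end{equation*}
by swapping $x\leftrightarrow y$. Expanding the square therefore yields
\begin{equation*}
\|S^K_\e(f)\|_{L^2(\R)}^2 = \|f\|_{L^2(\R)}^2 + 2\e\langle f, K\ast f\rangle + \e^2\|K\ast f\|_{L^2(\R)}^2 \leq (1+\e^2\kappa^2)\|f\|_{L^2(\R)}^2,
\end{equation*}
where Young's convolution inequality bounds $\|K\ast f\|_{L^2}\leq \kappa\|f\|_{L^2}$. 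Using $1+x\leq e^x$, this gives the per-step bound $\|S^K_\e(f)\|_{L^2(\R)}\leq e^{\frac{1}{2}\e^2\kappa^2}\|f\|_{L^2(\R)}$.

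Finally, writing $t=s+n\e$ with $s\in[0,\e)$ and $n\in\N_0$, I would compose: the $n$ copies of $S^B_\e$ and the single $S^B_s$ contribute nothing in $L^2$-norm, while the $n$ copies of $S^K_\e$ multiply the norm by at most $e^{\frac{n}{2}\e^2\kappa^2}$. Since $n\e\leq t$, this product is at most $e^{\frac{1}{2}\e t\kappa^2}\|f\|_{L^2(\R)}$, which is the claim. One minor technicality is making sure the intermediate iterates lie in $L^2(\R)$ so that the inner products and norms are finite; this follows from the $L^p$ bound of Proposition \ref{prop: propertiesOfTheApproximateSolutionMap} (applied with $p=2$ and to $BV$ data, then extended by approximation) together with the fact that $S^B_t$ preserves $L^\infty$ and $S^K_\e$ preserves both $L^2$ and $L^\infty$ on this class.

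The substantive step is the oddness cancellation; everything else is a direct iteration. The importance of the bound lies in the prefactor $\e$ inside the exponent: as $\e\to 0$ the right-hand side collapses to $\|f\|_{L^2(\R)}$, giving a uniform-in-$\e$ $L^2$ estimate which is what will make the splitting approximations converge to a genuine solution satisfying the non-increasing $L^2$ bound in \eqref{eq: boundsForWellPosednessThm}.
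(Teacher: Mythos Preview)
Your proof is correct and follows essentially the same route as the paper's: both exploit the oddness of $K$ to make $\langle f, K\ast f\rangle$ vanish, bound $\|S^K_\e(f)\|_{L^2}^2\leq(1+\e^2\kappa^2)\|f\|_{L^2}^2$, use $1+x\leq e^x$, and iterate together with the $L^2$-non-expansiveness of $S^B_\e$. Your closing technicality paragraph is slightly overcomplicated---since $f\in L^2\cap L^\infty$, both $S^B_t$ and $S^K_t$ map $L^2\cap L^\infty$ to itself directly (the latter by Young's inequality), so no detour through $BV$ data or approximation is needed.
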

\begin{proof}
Consider $\e>0$ and $t\geq 0$ fixed. As $K$ is odd, real valued and in $L^1(\R)$, it is readily checked that $K\ast$ is a skew-symmetric operator on $L^2(\R)$, and consequently $\cop{f,K\ast f}=0$ for all $f\in L^2(\R)$.
In particular,
\begin{align*}
    \norm{S^{K}_\e(f)}{L^2(\R)}^2
    = \cop{f,f} +\e^2 \cop{K\ast f, K\ast f}
    \leq &\, (1+\e^2\k^2)\norm{f}{L^2(\R)}^2.
\end{align*}
Combined with $1+\e^2\k^2\leq e^{\e^2\k^2}$ and the fact that $\norm{S^B_\e (f)}{L^2(\R)}\leq\norm{f}{L^2(\R)}$ (left-most inequality in \eqref{eq: propertyOneOfBurgerMap}), the result follows by iteration.
\end{proof}
When $u_0\in BV(\R)$, we can use $S_{\e,t}$ to construct a family of approximate entropy solutions of \eqref{eq: theMainEquation} as follows. For an arbitrary, but fixed,  $u_0\in BV(\R)$, let the family $(u^\e)_{\e>0}\subset L^\infty_{\text{loc}}([0,\infty),L^\infty(\R))$ be defined by
\begin{align}\label{eq: definitionOfFamilyOfApproximateSolutions}
    u^\e(t)=S_{\e,t}(u_0),
\end{align}
where $u^\e(t)$ is compact notation for $x\mapsto u^\e(t,x)$. Although $(u^\e)_{\e>0}$ is considered a family in $L^\infty_{\text{loc}}([0,\infty),L^\infty(\R))$, we stress that each member is for all $t\geq0$ well defined in $L^\infty(\R)$. For small $\e>0$ these functions are not far off from satisfying the entropy inequality \eqref{eq: entropyInequality}, as we now show.
\begin{lemma}\label{lem: approximateSolutionSatisfiesApproximateEntropySolution}
With $(u^\e)_{\e>0}$ as defined in \eqref{eq: definitionOfFamilyOfApproximateSolutions} for some $u_0\in BV(\R)$, we have for every entropy pair $(\eta,q)$ of \eqref{eq: theMainEquation} and non-negative $\varphi\in C^\infty_c(\R^+\times\R)$ the approximate entropy inequality
\begin{equation*}
    \begin{split}
    &\,\int_{0}^\infty \int_{\R}\eta(u^{\e})\varphi_t +q(u^{\e})\varphi_x +\eta'(u^\e)(K\ast u^\e)\varphi\mathrm{d}x\mathrm{d}t
    \geq  O(\e).
    \end{split}
\end{equation*}
\end{lemma}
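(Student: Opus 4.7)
The strategy is to exploit the operator splitting structure defining $u^\e$. On each slab $(n\e, (n+1)\e)\times \R$, $u^\e$ coincides with a BV entropy solution of Burgers' equation starting from $u^\e(n\e, \cdot)$, while at every breakpoint $t=m\e$ the $S^K_\e$ step produces the discrete jump
\[
u^\e(m\e, \cdot) - u^\e(m\e^-, \cdot) = \e\, K\ast u^\e(m\e^-, \cdot).
\]
I would split the left-hand side of the asserted inequality as $\sum_{n\geq 0}\int_{n\e}^{(n+1)\e}\!\int_\R$, apply Burgers' entropy inequality (in its BV-form, with time-boundary data) on each slab, and show that the accumulated jump contribution cancels the convolution term in \eqref{eq: entropyInequality} up to $O(\e)$.

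On each slab, $\partial_t \eta(u^\e) + \partial_x q(u^\e) \leq 0$ distributionally, so testing against $\varphi \geq 0$ and integrating by parts in $t$ yields
\[
\int_{n\e}^{(n+1)\e}\int_\R \eta(u^\e)\varphi_t + q(u^\e)\varphi_x \dd x\dd t \geq \int_\R \big[\eta(u^\e((n+1)\e^-))\varphi((n+1)\e) - \eta(u^\e(n\e))\varphi(n\e)\big]\dd x.
\]
Summing over $n$ and using $\varphi(0,\cdot)\equiv 0$, the boundary terms telescope (apart from the jumps at the breakpoints) into $\sum_{m\geq 1}\int_\R [\eta(u^\e(m\e^-)) - \eta(u^\e(m\e))]\varphi(m\e)\dd x$. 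Since $\eta$ is smooth and $u^\e$ is uniformly bounded by Proposition \ref{prop: propertiesOfTheApproximateSolutionMap}, a second-order Taylor expansion rewrites each summand as
\[
-\e\int_\R \eta'(u^\e(m\e^-))(K\ast u^\e(m\e^-))\varphi(m\e)\dd x
\]
plus a remainder of order $\e^2$ per breakpoint.

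In parallel, I would approximate the convolution integral by the right-endpoint Riemann sum
\[
\int_0^\infty\int_\R \eta'(u^\e)(K\ast u^\e)\varphi\dd x\dd t = \e\sum_{m\geq 1}\int_\R \eta'(u^\e(m\e^-))(K\ast u^\e(m\e^-))\varphi(m\e)\dd x + O(\e).
\]
The per-step error is $O(\e^2)$: the approximate $L^1$-time continuity in Proposition \ref{prop: propertiesOfTheApproximateSolutionMap} gives $\norm{u^\e(t)-u^\e(m\e^-)}{L^1(\R)} \leq C\e$ on $[(m-1)\e, m\e]$, which handles the locally Lipschitz $\eta'$-factor directly, while for the $K\ast u^\e$-factor the convolution is moved off the $L^1$-small difference via the duality $\int f(K\ast g)\dd x = -\int g(K\ast f)\dd x$ (using that $K$ is odd), so that $K \in L^1(\R)$ is enough. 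Combined with the telescoped jump contribution from the previous paragraph, the leading $\e$-sums cancel exactly, leaving $O(\e)$.

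The main obstacle is the error bookkeeping: one must verify that each local error is truly $O(\e^2)$, so that the $O(1/\e)$ breakpoints lying inside $\supp\varphi$ accumulate to $O(\e)$ in total. This rests on the uniform $L^\infty$ and $BV$ bounds on $u^\e$ (to control $\eta'$, $\eta''$ and $\norm{K\ast u^\e}{L^\infty}$ on a fixed range), the quantitative $L^1$-time continuity of Proposition \ref{prop: propertiesOfTheApproximateSolutionMap}, and $K \in L^1(\R)$ for the duality shift above.
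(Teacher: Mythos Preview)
Your proposal is correct and follows essentially the same approach as the paper's proof: decompose into Burgers slabs, extract boundary terms via the time continuity of the BV entropy solution on each slab, telescope to a jump sum at the breakpoints, Taylor-expand the jumps, and compare the resulting discrete sum to the convolution integral via the approximate time continuity of Proposition~\ref{prop: propertiesOfTheApproximateSolutionMap}. The only cosmetic difference is your duality shift $\int f(K\ast g)\,\dd x=-\int g(K\ast f)\,\dd x$ for the $K\ast u^\e$-factor; the paper instead (implicitly) uses Young's inequality $\norm{K\ast(u^\e(t)-u^\e(\tilde t))}{L^1}\leq\kappa\norm{u^\e(t)-u^\e(\tilde t)}{L^1}$, which is simpler and already sufficient since $\eta'(u^\e)$ and $\varphi$ are bounded.
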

\begin{proof}
Fixing $\e>0$, we observe from the definition of $S_{\e,t}$ \eqref{eq: definitionOfApproximationOfSolutionMap} that $u^\e$ is an entropy solution of Burgers' equation on the open sets $(t^\e_{n-1},t^\e_n)\times\R$ for $n\in\N$, where $t_n^\e=n\e$; thus
\begin{align}\label{eq: burgersEntropyEquation}
     \int_{t_{n-1}^\e}^{t_n^\e}\int_\R\eta(u^{\e})\varphi_t +q(u^{\e})\varphi_x \mathrm{d}x\mathrm{d}t\geq0,
\end{align}
for every non-negative $\varphi\in C_c^\infty((t^\e_{n-1},t^\e_n)\times\R)$ and every entropy pair $(\eta,q)$ of Burgers' equation, which coincides with the entropy pairs of \eqref{eq: theMainEquation} as the convection term of the two equations agree. Moreover, by the time continuity of $S_t^B$ \eqref{eq: propertyThreeOfBurgerMap} and the $TV$ bound from Proposition \ref{prop: propertiesOfTheApproximateSolutionMap}, we see that $u^\e\in C([t_{n-1}^\e,t_n^\e),L^1_{\loc}(\R))$; at $t=t_n^\e$ it is discontinuous from the left, as the left limit is given by $u^\e(t_n^\e-)=S^B_\e(u^\e(t_{n-1}^\e))$, while we have defined
\begin{align}\label{eq: correlationBetweenLeftAndRightLimitOfApproximateSolution}
    u^\e(t^\e_n)=u^\e(t^\e_n-) + \e K\ast u^{\e}(t^\e_n-).
\end{align}
The continuity in time allows us, by a similar trick used to attain \eqref{eq: L1ContractionOnAverage}, to extend \eqref{eq: burgersEntropyEquation} to
\begin{equation}\label{e: extendedBurgersEquation}
    \begin{split}
         \int_{t_{n-1}^\e}^{t_n^\e}\int_\R\eta(u^{\e})\varphi_t +q(u^{\e})\varphi_x \mathrm{d}x\mathrm{d}t\geq&\, \int_{\R}\eta(u^{\e}(t_n^\e-))\varphi(t_n^\e,x)\mathrm{d}x\\
    &\,- \int_{\R}\eta(u^{\e}(t_{n-1}^\e))\varphi(t_{n-1}^\e,x)\mathrm{d}x,
    \end{split}
\end{equation}
for all non-negative $\varphi\in C^\infty_c(\R^+\times\R)$. For the remainder of the proof, consider the entropy pair $(\eta,q)$ and $\varphi\in C^\infty_c(\R^+\times\R)$ fixed. Summing \eqref{e: extendedBurgersEquation} over $n\in\N$ and using $\varphi(0,x)=0$, we get
\begin{equation}\label{eq: almostEntropySolutionRaw}
    \begin{split}
    &\,\int_{\R^+\times\R}\eta(u^{\e})\varphi_t +q(u^{\e})\varphi_x \mathrm{d}x\mathrm{d}t\\
    \geq&\, \sum_{n=1}^\infty\int_{\R}\Big[\eta(u^{\e}(t_n^\e-))-\eta(u^{\e}(t_n^\e))\Big]\varphi(t_n^\e,x)\mathrm{d}x.
    \end{split}
\end{equation}
By Proposition \ref{prop: propertiesOfTheApproximateSolutionMap}, the family $(u^\e)_{\e>0}$ is uniformly bounded on the support of $\varphi$, and so we can assume without loss of generality that $|\eta'|,|\eta''|<C_1$ for some large $C_1$. Using the relation \eqref{eq: correlationBetweenLeftAndRightLimitOfApproximateSolution}, the square bracket from \eqref{eq: almostEntropySolutionRaw} can thus be estimated
\begin{equation*}
    \begin{split}
           &\,\eta(u^{\e}(t_n^\e-))-\eta(u^\e(t_n^\e))\\
   \geq &\,-\e\eta'(u^\e(t_n^{\e}-))\Big[K\ast u^\e(t_n^\e-)\Big] - \frac{C_1\e^2}{2}|K\ast u^\e(t^\e_n-)|^2,
    \end{split}
\end{equation*}
which, again by the uniform bound of $u^\e$ on the compact support of $\varphi$, further implies
\begin{equation}\label{eq: necessaryInequalityForApproximateSolution}
    \begin{split}
           &\,\int_{\R}\Big[\eta(u^{\e}(t_n^\e-))-\eta(u^\e(t_n^\e))\Big]\varphi(t_n^\e,x)\mathrm{d}x\\
   \geq &\,-\e\int_{\R}\eta'(u^\e(t_n^{\e}-))\Big[K\ast u^\e(t_n^\e-)\Big]\varphi(t_n,x)\mathrm{d}x - C_2\e^2,
    \end{split}
\end{equation}
for some $C_2>0$ independent of $n$ and $\e$.
Combining the uniform time regularity of Proposition \ref{prop: propertiesOfTheApproximateSolutionMap} and the compact support of $\varphi$, we see that the function
\begin{align}\label{eq: lipschitzOfTMappingToIntegral}
    g_\e(t)\coloneqq \int_\R\eta'(u^\e(t))\Big[K\ast u^\e(t)\Big]\varphi(t,x)\mathrm{d}x,
\end{align}
satisfies for all $t\geq \tilde t\geq 0$ an inequality $|g_\e(t)-g_\e(\tilde t)|\leq C_3(t-\tilde t + \e)$ for some sufficiently large $C_3$ independent of $\e$. Thus, the integral on the right-hand side of \eqref{eq: necessaryInequalityForApproximateSolution} can be bounded from below as such
\begin{equation}\label{eq: secondNecessaryInequalityForApproximateSolution}
    \begin{split}
    &\,-\e\int_{\R}\eta'(u^\e(t_n^{\e}-))\Big[K\ast u^\e(t_n^\e-)\Big]\varphi(t_n,x)\mathrm{d}x\\
    =&\,-\int_{t^{\e}_{n-1}}^{t^\e_n}\int_{\R}\eta'(u^\e(t_n^{\e}-))\Big[K\ast u^\e(t_n^\e-)\Big]\varphi(t_n,x)\mathrm{d}x\mathrm{d}t\\
    \geq &\,-\int_{t^{\e}_{n-1}}^{t^\e_n}\int_{\R}\eta'(u^\e(t))\Big[K\ast u^\e(t)\Big]\varphi(t,x)\mathrm{d}x\mathrm{d}t - 2C_3\e^2.
    \end{split}
\end{equation}
Picking the smallest $N(\e)\in\N$ such that $\supp\varphi \cap (\e N(\e),\infty)\times\R=\emptyset$, we combine \eqref{eq: almostEntropySolutionRaw}, \eqref{eq: necessaryInequalityForApproximateSolution} and \eqref{eq: secondNecessaryInequalityForApproximateSolution} to deduce
\begin{align*}
    &\,\int_{\R^+\times\R}\eta(u^{\e})\varphi_t +q(u^{\e})\varphi_x +\eta'(u^\e)(K\ast u^\e)\varphi\mathrm{d}x\mathrm{d}t
    \geq CN(\e)\e^2,
\end{align*}
for some sufficiently large $C>0$. And as $N(\e)\e^2 \sim \e$ the proof is complete.
\end{proof}

With the previous result at hand, it is natural to look for a limit function of $(u^\e)_{\e>0}$ as $\e\to0$; this would be a suitable candidate for an entropy solution of \eqref{eq: theMainEquation} with initial data $u_0\in BV(\R)$. In the next proposition, we do exactly this and collect a few properties about the resulting solution.
\begin{proposition}\label{prop: solutionWithBVDataIsLimitOfApproximateSolutions}
For any initial data $u_0\in BV(\R)$, let $(u^\e)_{\e>0}$ be as defined in \eqref{eq: definitionOfFamilyOfApproximateSolutions}. Then, for all $t\geq 0$ the following limit holds in $L^1_{\mathrm{loc}}(\R)$
\begin{align}\label{eq: solutionWithBVDataIsLimitOfApproximateSolutions}
    u^\e(t)\to u(t),\quad \e\to0,
\end{align}
where $u$ is an entropy solution of \eqref{eq: theMainEquation} with initial data $u_0$. Moreover, $u$ is an element of  $C([0,\infty), L^1(\R))\cap L^\infty_{\mathrm{loc}}([0,\infty),L^\infty(\R))$ and satisfies for all $t\geq0$
\begin{align}\label{eq: coarseSupBoundOfSolutionWithBVData}
          \norm{u(t)}{L^\infty(\R)}\leq&\, e^{t\k}\norm{u_0}{L^\infty(\R)},\\
\label{eq: L2BoundOfEntropySolutionWithBVData}
    \norm{u(t)}{L^2(\R)}\leq&\,\norm{u_0}{L^2(\R)},
\end{align}
where $\k\coloneqq\norm{K}{L^1(\R)}$.
\end{proposition}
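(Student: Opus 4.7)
The plan is a compactness-plus-uniqueness argument: extract a convergent subsequence from $(u^\e)_{\e>0}$ via the uniform bounds of Proposition \ref{prop: propertiesOfTheApproximateSolutionMap}, pass to the limit in the approximate entropy inequality of Lemma \ref{lem: approximateSolutionSatisfiesApproximateEntropySolution} to identify the limit as an entropy solution, and finally invoke Proposition \ref{prop: uniquenessAndL1Contraction} to promote subsequential convergence to convergence of the full family.

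\textbf{Compactness.} Fix $T>0$. Proposition \ref{prop: propertiesOfTheApproximateSolutionMap} bounds $(u^\e)_{\e\in(0,1]}$ uniformly in $L^\infty([0,T]; BV(\R)\cap L^\infty(\R))$ and gives the approximate Lipschitz estimate $\norm{u^\e(t)-u^\e(\tilde t)}{L^1(\R)}\leq (|t-\tilde t|+\e)C_{u_0}(T)$. At each fixed $t$, the uniform $TV$ bound together with Helly's selection theorem yields precompactness of $\{u^\e(t)\}_\e$ in $L^1_\loc(\R)$. A diagonal extraction over a countable dense set of times, combined with the equicontinuity from the approximate Lipschitz estimate, produces a subsequence $\e_k\to 0$ and a function $u$ with $u^{\e_k}\to u$ in $C([0,T], L^1_\loc(\R))$ for every $T>0$. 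Fatou's lemma applied to the uniform $L^1(\R)$ bound shows $u(t)\in L^1(\R)$, and applied to the time continuity estimate gives $\norm{u(t)-u(\tilde t)}{L^1(\R)}\leq |t-\tilde t| C_{u_0}(T)$, so $u\in C([0,\infty), L^1(\R))$.

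\textbf{Passing to the limit.} The $L^\infty$ bound \eqref{eq: coarseSupBoundOfSolutionWithBVData} follows from the uniform estimate of Proposition \ref{prop: propertiesOfTheApproximateSolutionMap} by lower semicontinuity along a further a.e.-convergent subsequence; the $L^2$ bound \eqref{eq: L2BoundOfEntropySolutionWithBVData} follows in the same way from Lemma \ref{lem: theApproximateSolutionMapConservesTheL2Norm} upon sending $\e\to 0$ in the prefactor $e^{\frac{1}{2}\e t \k^2}$. For the entropy inequality I send $\e_k\to 0$ in the estimate of Lemma \ref{lem: approximateSolutionSatisfiesApproximateEntropySolution} for a fixed entropy pair $(\eta,q)$ and non-negative $\varphi\in C_c^\infty(\R^+\times\R)$. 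Since $(u^{\e_k})$ is uniformly bounded in $L^\infty$ on the support of $\varphi$, Lipschitz continuity of $\eta, q, \eta'$ on the relevant bounded range turns $L^1_\loc$ convergence of $u^{\e_k}$ into convergence of $\eta(u^{\e_k}), q(u^{\e_k}), \eta'(u^{\e_k})$ in $L^1_\loc$. The nonlinear convolution term I split as $\eta'(u^{\e_k})(K\ast u^{\e_k})\varphi = [\eta'(u^{\e_k})-\eta'(u)](K\ast u^{\e_k})\varphi + \eta'(u)[K\ast(u^{\e_k}-u)]\varphi + \eta'(u)(K\ast u)\varphi$; the first summand vanishes using $L^1_\loc$ convergence of $\eta'(u^{\e_k})$ together with the uniform bound $\norm{K\ast u^{\e_k}}{L^\infty}\leq \k\sup_k\norm{u^{\e_k}}{L^\infty}$, while the second vanishes after applying Fubini to move the convolution onto the compactly supported bounded function $\eta'(u)\varphi$ and then using dominated convergence (on a further a.e.-convergent subsequence) thanks to $K\in L^1(\R)$ and the uniform $L^\infty$ bound on $u^{\e_k}-u$.

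The initial condition is immediate: $u\in C([0,\infty),L^1(\R))$ together with $\norm{u^\e(t)-u_0}{L^1(\R)}\leq (t+\e)C_{u_0}(t)$ forces $u(0)=u_0$ in $L^1_\loc$ sense, so condition (2) of Def. \ref{def: entropySolution} holds. Having verified that $u$ is an entropy solution with initial data $u_0$, Proposition \ref{prop: uniquenessAndL1Contraction} guarantees the limit is independent of the extracted subsequence, so the full family $u^\e\to u$ in $L^1_\loc(\R)$ for every $t\geq 0$, establishing \eqref{eq: solutionWithBVDataIsLimitOfApproximateSolutions}. The main technical obstacle I anticipate is the limit passage in the nonlinear convolution term $\eta'(u^{\e_k})(K\ast u^{\e_k})$, since neither factor converges in a single strong norm; the splitting above, combined with $K\in L^1(\R)$ and the uniform $L^\infty$ bound, handles it without further assumptions.
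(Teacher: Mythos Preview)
Your proof is correct and follows the same route as the paper: compactness from the uniform $BV$ and approximate time-Lipschitz bounds of Proposition \ref{prop: propertiesOfTheApproximateSolutionMap}, passage to the limit in Lemma \ref{lem: approximateSolutionSatisfiesApproximateEntropySolution}, and uniqueness from Proposition \ref{prop: uniquenessAndL1Contraction} to upgrade subsequential to full convergence. The only differences are cosmetic --- Helly in place of Kolmogorov--Riesz, and a more explicit splitting of the convolution term than the paper's one-line dominated-convergence argument on $\supp\varphi$ --- though note that the $+\e$ in the time estimate means the family is only \emph{approximately} equicontinuous, a point the paper addresses by redoing the Arzel\`a--Ascoli steps by hand (your diagonal extraction works for the same reason, since $\e_k\to0$ kills the defect).
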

\begin{proof}
We first prove the limit \eqref{eq: solutionWithBVDataIsLimitOfApproximateSolutions} for a special subsequence of $(u^\e)_{\e>0}$ and then generalize afterwards.
Fixing $t\geq 0$, we see from Proposition \ref{prop: propertiesOfTheApproximateSolutionMap} that the functions $(u^\e(t))_{\e>0}$ satisfy for any $p\in[1,\infty]$
\begin{align}\label{eq: uniformLpBoundUsedInProofOfExistenceForBVData}
    \norm{u^\e(t)}{L^p(\R)}\leq e^{t\kappa}\norm{u_0}{L^p(\R)},
\end{align}
and in particular, they are uniformly bounded in $L^1(\R)$. Moreover, they are equicontinuous with respect to translation
\begin{align*}
        \norm{u^\e(t,\cdot+h)-u^\e(t,\cdot)}{L^1(\R)}\leq h e^{t\kappa}|u_0|_{TV},
\end{align*}
for all $h>0$, and so by the Kolmogorov--Riesz compactness Theorem, any infinite subset of $(u^\e(t))_{\e>0}$ is relatively compact in $L^1_{\text{loc}}(\R)$; as we have skipped developing a tightness estimate for $(u^\e(t))_{\e>0}$, we can not claim the family to be relatively compact in $L^1(\R)$. The family $(u^\e)_{\e>0}$ is not equicontinuous in time and so we can not directly apply the Arzelà-Ascoli theorem, however, the family is for small $\e$ arbitrary close to be equicontinuous and so the proof of the theorem is still applicable; for clarity we perform the steps. By a standard diagonalization argument, we can select a sub-sequence $(u^{\e_{j}})_{j\in\N}\subset(u^\e)_{\e>0}$ such that $\lim_{j\to\infty}\e_j =0$ and  $u^{\e_j}(t)$ converges in $L^1_{\loc}(\R)$ for every $t\in E$ with $E$ being a countable dense subset of $\R^+$. Next, we claim that $u^{\e_j}(t)$ converges in $L^1_{\loc}(\R)$ for every $t\geq 0$. Indeed, fix $r>0$ for locality and pick $s\in E$ such that $|s-t|<\epsilon$ for some arbitrary $\epsilon>0$. By the time regularity estimate of Proposition \ref{prop: propertiesOfTheApproximateSolutionMap}, we have
\begin{align*}
    &\, \limsup_{j,i\to\infty}\int_{-r}^r|u^{\e_j}(t)-u^{\e_i}(t)|\mathrm{d}x\\
    \leq&\, \limsup_{j,i\to\infty}\int_{-r}^r|u^{\e_j}(t)-u^{\e_j}(s)| +|u^{\e_j}(s)-u^{\e_i}(s)|+|u^{\e_i}(s)-u^{\e_i}(t)|\mathrm{d}x\\
    \leq&\, \limsup_{j,i\to\infty}(2\epsilon +\e_j+\e_i)C_{u_0}(t+\epsilon)+\limsup_{j,i\to\infty}\int_{-r}^r|u^{\e_j}(s)-u^{\e_i}(s)|\mathrm{d}x\\
    =&\,2\epsilon C_{u_0}(t+\epsilon),
\end{align*}
and since $r$ and $\epsilon$ were arbitrary, we conclude that $u^{\e_j}(t)$ converges in $L^1_{\loc}(\R)$ to some $u(t)$. Moreover, as $u^{\e_j}(t)$ converges locally to $u(t)$, the bound \eqref{eq: uniformLpBoundUsedInProofOfExistenceForBVData} necessarily carries over to $u(t)$, and so in particular
\begin{align*}
    \norm{u(t)}{L^p(\R)}\leq e^{t\k}\norm{u_0}{L^p(\R)},
\end{align*}
and further by Fatou's lemma we infer for all $t\geq\tilde t\geq0$
\begin{equation}
\begin{split}\label{eq: timeContinuityOfEntropySolution}
    \norm{u(t)-u(\tilde t)}{L^1(\R)}\leq&\, \liminf_{j\to\infty} \norm{u^{\e_j}(t)-u^{\e_j}(\tilde t)}{L^1(\R)}\\ \leq&\,\liminf_{j\to\infty}(t-\tilde t+\e_j)C_{u_0}(t)\\
    =&\,(t-\tilde t)C_{u_0}(t).
\end{split}
\end{equation}
Thus $u\in C([0,\infty), L^1(\R))\cap L^\infty_{\loc}([0,\infty),L^\infty(\R))$.  Next, we prove that $u$ is, in accordance with Def. \ref{def: entropySolution}, an entropy solution of \eqref{eq: theMainEquation} with initial data $u_0$; the latter part follows from $u(0)=u_0$ and \eqref{eq: timeContinuityOfEntropySolution}. To see that $u$ satisfies the entropy inequalities \eqref{eq: entropyInequality}, we pick an arbitrary entropy pair $(\eta,q)$ of \eqref{eq: theMainEquation} and a non-negative $\varphi\in C^\infty_c(\R^+\times\R)$ and recall Lemma \ref{lem: approximateSolutionSatisfiesApproximateEntropySolution} to calculate
\begin{equation}\label{eq: entropyConditionSatisfiedForLimitOfApproximations}
    \begin{split}
        &\,\int_{0}^\infty \int_{\R}\eta(u)\varphi_t +q(u)\varphi_x +\eta'(u)(K\ast u)\varphi\mathrm{d}x\mathrm{d}t\\
    =&\,\lim_{j\to0}\int_{0}^\infty \int_{\R}\eta(u^{\e_j})\varphi_t +q(u^{\e_j})\varphi_x +\eta'(u^{\e_j})(K\ast u^{\e_j})\varphi\mathrm{d}x\mathrm{d}t\\
    \geq&\,  \lim_{j\to0}O(\e_j)=0,
    \end{split}
\end{equation}
where the second line holds as the integrand converges in $L^1(\R)$; after all, $(u^{\e_j})_{j\in\N}$ is uniformly bounded on the compact support of $\varphi$.
By Proposition \ref{prop: uniquenessAndL1Contraction} we conclude that $u$ is the unique entropy solution of \eqref{eq: theMainEquation} with $u_0$ as initial data. What remains to show, is the general limit \eqref{eq: solutionWithBVDataIsLimitOfApproximateSolutions} and the $L^2$ bound of $u$ \eqref{eq: L2BoundOfEntropySolutionWithBVData}; the latter follow by Lemma \ref{lem: theApproximateSolutionMapConservesTheL2Norm} and Fatou's lemma. We prove \eqref{eq: solutionWithBVDataIsLimitOfApproximateSolutions} by contradiction; if this limit does not exist, then there is a subsequence $(u^{\e_j})_{j\in\N}\subset (u^\e)_{\e>0}$, a $t> 0$ and an $r>0$ such that
\begin{align*}
    \liminf_{j\to\infty}\int_{-r}^r|u(t)-u^{\e_j}(t)|\mathrm{d}x>0.
\end{align*}
But as argued above, the infinite set $(u^{\e_j})_{j\in\N}$ must be precompact in $L^1_{\loc}(\R)$ for every $t\geq 0$, and thus we can pick a subsequence converging for every $t\geq 0$ in $L^1_{\loc}(\R)$ to the unique (Proposition \ref{prop: uniquenessAndL1Contraction}) entropy solution $u$ which contradicts the above limit inferior.
\end{proof}

The existence of entropy solutions for general $L^\infty$ data now follows from the previous proposition together with the weighted $L^1$-contraction provided by Proposition \ref{prop: uniquenessAndL1Contraction}. As entropy solutions with $BV$ data are $L^1$-continuous in time, said contraction extends to all $t\geq 0$. 
\begin{corollary}\label{cor: generalSolutionsAreLimitsOfBVSolutions}
For any initial data $u_0\in L^\infty(\R)$, there exists a corresponding entropy solution $u\in C([0,\infty), L^1_{\mathrm{loc}}(\R))$ of \eqref{eq: theMainEquation} satisfying for all $t\geq 0$
\begin{align}\label{eq: coarseSupBoundOfGeneralSolution}
          \norm{u(t)}{L^\infty(\R)}\leq&\, e^{t\k}\norm{u_0}{L^\infty(\R)},
\end{align}
where $\k\coloneqq \norm{K}{L^1(\R)}$. If $u_0\in L^2\cap L^\infty(\R)$, it also satisfies for all $t\geq 0$
\begin{align}
\label{eq: L2BoundOfEntropyGeneralSolution}
    \norm{u(t)}{L^2(\R)}\leq&\,\norm{u_0}{L^2(\R)}.
\end{align}
\end{corollary}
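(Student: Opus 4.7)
The plan is to obtain $u$ as a limit of the $BV$-data entropy solutions provided by Proposition \ref{prop: solutionWithBVDataIsLimitOfApproximateSolutions}, exploiting the weighted $L^1$-contraction of Proposition \ref{prop: uniquenessAndL1Contraction} to transfer existence to general $L^\infty$ data. First, I would approximate $u_0$ by a sequence $(u_{0,n})_{n\in\N}\subset BV(\R)$ with $\norm{u_{0,n}}{L^\infty(\R)}\leq\norm{u_0}{L^\infty(\R)}$ and $u_{0,n}\to u_0$ in $L^1_\loc(\R)$; a concrete choice is $u_{0,n}=(u_0\cdot\chi_{[-n,n]})\ast\rho_n$ for a standard mollifier $\rho_n$. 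When $u_0\in L^2\cap L^\infty(\R)$, this approximation also satisfies $u_{0,n}\to u_0$ in $L^2(\R)$ and $\norm{u_{0,n}}{L^2(\R)}\leq\norm{u_0}{L^2(\R)}$. By Proposition \ref{prop: solutionWithBVDataIsLimitOfApproximateSolutions}, each $u_{0,n}$ gives rise to an entropy solution $u_n\in C([0,\infty),L^1(\R))\cap L^\infty_\loc([0,\infty),L^\infty(\R))$ satisfying the bounds \eqref{eq: coarseSupBoundOfSolutionWithBVData} and, when applicable, \eqref{eq: L2BoundOfEntropySolutionWithBVData}.

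Next, fix $T,r>0$ and set $M=e^{T\k}\norm{u_0}{L^\infty(\R)}$, which uniformly dominates the quantity in \eqref{eq: definitionOfM} for every pair $u_n,u_m$ on $[0,T]\times\R$. Applying the contraction \eqref{eq: newFullL1Contraction} (extended to all $t\in[0,T]$ via the time continuity of BV-data solutions, as noted just before the corollary), we obtain for every $t\in[0,T]$
\begin{align*}
\int_{-r}^{r}|u_n(t,x)-u_m(t,x)|\dd x\leq \int_\R |u_{0,n}(x)-u_{0,m}(x)|w^r_M(T,x)\dd x.
\end{align*}
Since $w^r_M(T,\cdot)\in L^1\cap L^\infty(\R)$ by \eqref{eq: boundOnTheWeightForTheL1Contraction}, and $(u_{0,n})$ is Cauchy in $L^1_\loc(\R)$ with uniform $L^\infty$ bound, the right-hand side tends to zero as $n,m\to\infty$. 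Thus $(u_n)$ is Cauchy in $C([0,T],L^1((-r,r)))$, and a diagonal extraction over $T,r\to\infty$ produces a limit $u\in C([0,\infty),L^1_\loc(\R))$. The bound \eqref{eq: coarseSupBoundOfGeneralSolution} passes to $u$ since $\norm{u_n(t)}{L^\infty(\R)}\leq e^{t\k}\norm{u_0}{L^\infty(\R)}$ uniformly in $n$, and the initial datum is attained in $L^1_\loc$-sense by construction.

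To conclude that $u$ is an entropy solution, I would pass to the limit in the entropy inequality \eqref{eq: entropyInequality} for $u_n$. On the compact support of any test function $\varphi$, passing to an a.e.\ convergent subsequence and using the uniform $L^\infty$ bound gives $\eta(u_n)\to\eta(u)$ and $q(u_n)\to q(u)$ in $L^1$ by dominated convergence. The main technical step is the nonlocal term: writing
\begin{align*}
\int_{-R}^R|K\ast(u_n-u)(x)|\dd x\leq \int_\R |K(y)|\int_{-R}^R|u_n-u|(x-y)\dd x\dd y,
\end{align*}
the uniform $L^\infty$ bound together with $L^1_\loc$-convergence and the integrability of $K$ yield $K\ast u_n\to K\ast u$ in $L^1_\loc$ via dominated convergence in $y$; combined with the a.e.\ convergence of $\eta'(u_n)$ this closes the argument. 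Uniqueness of the obtained $u$ is immediate from Proposition \ref{prop: uniquenessAndL1Contraction}. Finally, when $u_0\in L^2\cap L^\infty(\R)$, the bound \eqref{eq: L2BoundOfEntropyGeneralSolution} follows from $\norm{u_n(t)}{L^2(\R)}\leq\norm{u_{0,n}}{L^2(\R)}\to\norm{u_0}{L^2(\R)}$ together with Fatou's lemma applied along an a.e.\ convergent subsequence. The main obstacle is precisely the handling of the nonlocal convolution term under mere $L^1_\loc$ convergence, which the combination of $K\in L^1$ and the uniform $L^\infty$ bound resolves.
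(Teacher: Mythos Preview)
Your proposal is correct and follows essentially the same approach as the paper: approximate $u_0$ by $BV$ data, use the weighted $L^1$-contraction \eqref{eq: newFullL1Contraction} with $M=e^{T\k}\norm{u_0}{L^\infty(\R)}$ to show the $BV$-data solutions are Cauchy in $C([0,\infty),L^1_\loc(\R))$, and pass to the limit in the entropy inequalities and the bounds. Your treatment of the nonlocal term is more explicit than the paper's (which simply refers back to the computation \eqref{eq: entropyConditionSatisfiedForLimitOfApproximations}), and the diagonal extraction you mention is unnecessary since the Cauchy property already yields a single limit in the Fr\'echet space $C([0,\infty),L^1_\loc(\R))$, but neither point affects correctness.
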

\begin{proof}
For $u_0\in L^\infty(\R)$, let $(u^j)_{j\in\N}$ be a sequence of entropy solutions of \eqref{eq: theMainEquation} whose corresponding initial data $(u_0^j)_{j\in\N}\subset BV(\R)$ satisfies $\sup_j\norm{u_0^j}{L^\infty(\R)}\leq \norm{u_0}{L^\infty(\R)}$ and $u^j_0\to u_0$ in $L^1_{\loc}(\R)$ as $j\to\infty$. For a fixed $T>0$, set
\begin{align*}
    M = e^{T\k}\norm{u_0}{L^\infty(\R)},
\end{align*}
and observe from \eqref{eq: coarseSupBoundOfSolutionWithBVData} that $\sup_j\norm{u^j(t)}{L^\infty(\R)}\leq M$ for all $t\in[0,T]$. Using \eqref{eq: fullL1Contraction}, we find for any $r>0$ 
\begin{align*}
    &\,\limsup_{j,i\to\infty}\sup_{0\leq t\leq T}\int_{-r}^r|u^j(t,x)-u^i(t,x)|\mathrm{d}x\\
    \leq &\,\limsup_{j,i\to\infty}\int_{\R}|u^j_0(x)-u^i_0(x)|w_M^r(T,x)\mathrm{d}x=0,
\end{align*}
where we used that $w_M^r$ is increasing in $t$. This shows that $(u^j)_{j\in\N}$ is Cauchy in the Fréchet space $C([0,\infty), L^1_{\loc}(\R))$ and so the sequence converges to some $u\in C([0,\infty), L^1_\loc(\R))$. Moreover,
\begin{align*}
    \norm{u(t)}{L^\infty(\R)}\leq \liminf_{j\to\infty} \norm{u^j(t)}{L^\infty(\R)}\leq e^{t\k}\norm{u_0}{L^\infty(\R)},
\end{align*}
by \eqref{eq: coarseSupBoundOfSolutionWithBVData}, and so $u\in L^\infty_{\loc}([0,\infty),L^\infty(\R))$ too. That $u$ takes $u_0$ as initial data in $L^1_\loc$-sense follows from the time-continuity of $u$ and $u(0)=\lim_{j\to\infty}u^j_0=u_0$ where the limit is taken in $L^1_\loc(\R)$. Moreover, as each member $(u^j)_{j\in \N}$ satisfies the entropy inequalities \eqref{eq: entropyInequality}, the same can be said for $u$ by a similar calculation as \eqref{eq: entropyConditionSatisfiedForLimitOfApproximations}. Thus the corollary is proved, save for the $L^2$ estimate; this is attained through Fatou's lemma and \eqref{eq: L2BoundOfEntropySolutionWithBVData} as we may assume $\sup_{j}\norm{u_0^j}{L^2(\R)}\leq \norm{u_0}{L^2(\R)}$.
\end{proof}

\subsection{$L^2$ continuity and stability of entropy solutions}\label{sec: L2ContinuityAndStability}
For clarity, we summarize what of Theorem \ref{thm: wellPosedness} has been proved so far and what remains to be proved. Combining  Proposition \ref{prop: uniquenessAndL1Contraction} and Corollary \ref{cor: generalSolutionsAreLimitsOfBVSolutions}, we conclude that there exists a unique entropy solution of \eqref{eq: theMainEquation} in accordance with Def. \ref{def: entropySolution} for every initial data $u_0\in L^\infty(\R)$ and thus also for $u_0\in L^2\cap L^\infty(\R)$. Furthermore, Corollary \ref{cor: generalSolutionsAreLimitsOfBVSolutions} guarantees that these solutions are continuous from $[0,\infty)$ to $L^1_\loc(\R)$ so that the restriction $u(t)\coloneqq u(t,\cdot)\in L^1_{\loc}(\R)$ makes sense for all $t\geq 0$. The same corollary also provides the bounds \eqref{eq: boundsForWellPosednessThm} of Theorem \ref{thm: wellPosedness}. 

It remains to prove that entropy solutions with $L^2\cap L^\infty$ data are continuous from $[0,\infty)$ to $L^2(\R)$ and that they satisfy the stability result of Theorem \ref{thm: wellPosedness}. To do so, we shall exploit the height bound of Corollary \ref{cor: decayingHeightBoundForEntropySolutions}. As explained at the beginning of Section \ref{sec: longTermBounds}, Corollary \ref{cor: decayingHeightBoundForEntropySolutions} can be proved for the case $u_0\in L^2\cap L^\infty(\R)$ independently of this subsection; thus we may here use the height bound \eqref{eq: heightBoundWhenSIsZero} for entropy solutions of \eqref{eq: theMainEquation} without risking a circular argument. From here til the end of the section, we take the above properties of entropy solutions for granted. We begin with a variant of Proposition \ref{prop: uniquenessAndL1Contraction} which makes use of the discussed height bound.

\begin{lemma}\label{lem: L1ContractionExploitingHeigthBound}
There is a function $\Psi\colon [0,\infty)^3\to[0,\infty)$, increasing in all arguments, such that for any pair of entropy solutions $u,v$ of \eqref{eq: theMainEquation} with respective initial data $u_0,v_0\in L^2\cap L^\infty(\R)$ one has for any $t,r\geq 0$ and $N\geq \max\{\norm{u_0}{L^2(\R)},\norm{v_0}{L^2(\R)}\}$ the inequality
\begin{align}\label{eq: L1ContractionExploitingHeigthBound}
    \norm{u(t)-v(t)}{L^1([-r,r])} \leq \Psi(t, N, r)\norm{u_0-v_0}{L^2(\R)}.
\end{align}
\end{lemma}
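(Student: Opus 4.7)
The approach is to strengthen the weighted $L^1$-contraction of Proposition \ref{prop: uniquenessAndL1Contraction} by replacing the constant $M$ bounding $(\|u\|_{L^\infty}+\|v\|_{L^\infty})/2$ on $[0,t]$ with a time-dependent bound $M_N(s)$ that depends on the initial data only through $N$, obtained from the height bound of Corollary \ref{cor: decayingHeightBoundForEntropySolutions}. The constant $M$ enters the proof of Proposition \ref{prop: uniquenessAndL1Contraction} in two places: the pointwise inequality $F(u,v) \leq M|u-v|$ (used where the test function is non-zero, i.e.\ in $(0,t)\times\R$ owing to the cutoff $\theta_\epsilon$) and the construction $g(t,x) = f(|x|+M(t-T))$ governing the propagation speed. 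Crucially, the first can be relaxed to a bound holding a.e.\ in $s>0$ that may blow up as $s\to 0^+$, and the second only uses the time-integral of $M$.

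Concretely, Corollary \ref{cor: decayingHeightBoundForEntropySolutions} gives for all $s>0$
\[ \tfrac{1}{2}\bigl(\|u(s)\|_{L^\infty(\R)} + \|v(s)\|_{L^\infty(\R)}\bigr) \leq M_N(s) := \tilde C_1 \|K\|_{L^1(\R)}^{1/3} N^{2/3} + \tilde C_2 N^{2/3} s^{-1/3}, \]
and since $s^{-1/3}$ is integrable at the origin,
\[ A(t,N) := \int_0^t M_N(s)\, ds = \tilde C_1 \|K\|_{L^1(\R)}^{1/3} N^{2/3} t + \tfrac{3}{2}\tilde C_2 N^{2/3} t^{2/3} \]
is finite for every $t\geq 0$. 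I then re-run the proof of Proposition \ref{prop: uniquenessAndL1Contraction} with $M$ replaced by $M_N(s)$: the estimate $F(u,v)\leq M_N(s)|u-v|$ now holds for a.e.\ $s\in(0,t)$, the auxiliary function of \eqref{eq: definitionOfG} is redefined as $g(t,x) = f\bigl(|x| - \int_t^T M_N(\sigma)\,d\sigma\bigr)$ so that $g_t = -M_N(t)|g_x|$, and the corresponding $\phi(t,x) = (e^{(T-t)|K|\ast}g(t,\cdot))(x)$ still satisfies $\phi_t + M_N(t)|\phi_x| + |K|\ast\phi \leq 0$ by the identical chain-rule computation. All remaining steps carry over verbatim.

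The outcome is a modified contraction
\[ \|u(t) - v(t)\|_{L^1([-r,r])} \leq \int_{\R} |u_0(x)-v_0(x)|\, \tilde w(x)\, dx \]
with $\tilde w = e^{t|K|\ast}\mathbbm{1}_{\{|x|<r+A(t,N)\}}$. Applying Cauchy--Schwarz together with the $L^2$ bound $\|\tilde w\|_{L^2(\R)} \leq e^{t\kappa}\sqrt{2(r+A(t,N))}$ from \eqref{eq: boundnessOfExponentialConvolutionOperator} yields \eqref{eq: L1ContractionExploitingHeigthBound} with the choice
\[ \Psi(t,N,r) := e^{t\kappa}\sqrt{2r+2A(t,N)}, \]
which is manifestly increasing in each of $t$, $N$, and $r$.

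The main obstacle is bookkeeping rather than conceptual: one must verify that the double limit $\rho_\epsilon\to\delta_0$ together with the temporal cutoff $\theta_\epsilon\to\mathbbm{1}_{(0,T)}$ in the proof of Proposition \ref{prop: uniquenessAndL1Contraction} still close cleanly when $M$ is merely integrable rather than bounded. Since $M_N$ ultimately appears only inside time-integrals — both in the crucial inequality $\phi_t + M_N(t)|\phi_x| + |K|\ast\phi \leq 0$ and when bounding the resulting weight in $L^2$ — the $s^{-1/3}$ singularity poses no real difficulty, but the dominating functions justifying each limit must be re-checked.
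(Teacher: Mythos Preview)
Your proposal is correct and follows essentially the same approach as the paper: both replace the constant $M$ in Proposition~\ref{prop: uniquenessAndL1Contraction} by the integrable-in-time height bound from Corollary~\ref{cor: decayingHeightBoundForEntropySolutions}, rebuild the test function with $g(t,x)=f\bigl(|x|-\int_t^T M_N(\sigma)\,d\sigma\bigr)$, and finish via Cauchy--Schwarz to obtain the same $\Psi(t,N,r)=e^{t\kappa}\sqrt{2r+2A(t,N)}$. The only technical difference is that the paper first passes to the diagonal keeping $F(u,v)\varphi_x$ intact (using local Lipschitz continuity of $F$ together with the global boundedness of $u,v$) and applies the time-dependent bound $m(t)$ afterward, whereas you insert $M_N$ during the doubling step; this is exactly the bookkeeping you flag, and it causes no trouble since at that stage $\varphi$ is compactly supported away from $t=0$ so $M_N$ is bounded and continuous there.
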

\begin{proof}
Let $u,v,u_0,v_0$ and $N$ be as described in the lemma. By \eqref{eq: heightBoundWhenSIsZero} from Corollary \ref{cor: decayingHeightBoundForEntropySolutions}, and the property of $N$, we have for all $t>0$
\begin{align}\label{eq: smallMWHichIsAverageOfHeight}
   \frac{\norm{u(t)}{L^\infty(\R)} + \norm{v(t)}{L^\infty(\R)}}{2}\leq C N^{\frac{2}{3}}\Big(1+\frac{1}{t^{\frac{1}{3}}}\Big)\eqqcolon m(t),
\end{align}
where $C\coloneqq \max\{2^{\frac{11}{12}}3^{\frac{1}{3}}\norm{K}{L^1(\R)}^{\frac{1}{3}},2^{\frac{5}{4}}\}$. With $F(u,v)\coloneqq\frac{1}{2}\sgn(u-v)(u^2-v^2)$, we have for any non-negative $\varphi\in C^\infty_c((0,\infty)\times\R)$ the inequality
\begin{align}\label{eq: firstPartOfL2Contraction}
    0\leq \int_0^\infty\int_\R |u-v|\varphi_t + F(u,v)\varphi_x + |u-v|(|K|\ast \varphi)\dd x \dd t.
\end{align}
This is attained by following the first half of the proof of Proposition \ref{prop: uniquenessAndL1Contraction} without using the bound $|F(u,v)|\leq M|u-v|$ as done in the first inequality of \eqref{eq: goingToTheDiagonalFirstPart}; one may instead, when `going to the diagonal', subtract $F(u(t,x),v(t,x))$ from $F(u(t,x),v(s,y))$ and use
\begin{align*}
    |F(u(t,x),v(s,y))-F(u(t,x),v(x,y))|\lesssim |v(s,y)-v(t,x)|,
\end{align*}
which follows from local Lipschitz continuity of $F$ and the fact that $u$ and $v$ are globally bounded (as pointed out after Corollary \ref{cor: decayingHeightBoundForEntropySolutions}). With \eqref{eq: firstPartOfL2Contraction} established, we may \textit{now} filter out $(u+v)/2$ from $F$ using the more precise bound \eqref{eq: smallMWHichIsAverageOfHeight}, that is
\begin{align*}
    |F(u(t,x),v(t,x))|\leq m(t)|u(t,x)-v(t,x)|.
\end{align*}
Doing so, and additionally setting $\varphi(t,x)=\theta(t)\phi(t,x)$ for two arbitrary non-negative functions $\theta\in C^\infty_c((0,T))$ and $\phi\in C_c^\infty((0,T)\times \R)$, with $T>0$ also arbitrary, we conclude from \eqref{eq: firstPartOfL2Contraction} that
\begin{align}\label{eq: secondPartOfL2Contraction}
    0\leq \int_0^T\int_\R|u-v|\theta'\phi\dd x\dd t + \int_0^T\int_\R |u-v|\theta\Big[\phi_t + m(t)|\phi_x| + |K|\ast \phi\Big]\dd x \dd t.
\end{align}
Observe that \eqref{eq: secondPartOfL2Contraction} resembles \eqref{eq: katoInequalityTwo}; for brevity, we skip minor details in the following steps due to their similarity of those following \eqref{eq: katoInequalityTwo}. Let $f\colon\R\to[0,1]$ be a smooth and non-increasing function satisfying $f(x)=1$ for $x\leq 0$ and $f(x)=0$ for sufficiently large $x$, and set
\begin{align*}
    g(t,x)\coloneqq f(|x| + M(t) -M(T)),
\end{align*}
where we have here defined $M(t)$ by
\begin{align*}
    M(t)\coloneqq \int_0^t m(s)\dd s = CN^{\frac{2}{3}}\Big(t + \tfrac{3}{2}t^{\frac{2}{3}}\Big).
\end{align*}
Analogous to \eqref{eq: definitionOfPhi}, we then set
\begin{equation}\label{eq: definitionOfPhiForL2Contraction}
    \begin{split}
           \phi(t,x) =&\, \Big(e^{(T-t)|K|\ast}g(t,\cdot)\Big)(x),
    \end{split}
\end{equation}
and while this $\phi$ is not of compact support, both it, and its derivatives, are integrable on $(0,T)\times \R$ and so by an approximation argument it can be used in \eqref{eq: secondPartOfL2Contraction}. By similar arguments as those following \eqref{eq: definitionOfPhi} we find also here that the second integral in \eqref{eq: secondPartOfL2Contraction} is non-positive, and so we may remove it. Letting then $\theta$ approximate $\mathbbm{1}_{(0,T)}$ in a similar (smooth) manner as done by the sequence \eqref{eq: thetaWhichApproximatesInterval}, we may from \eqref{eq: secondPartOfL2Contraction} conclude
\begin{align}\label{eq: thirdPartOfL2Contraction}
    \int_{\R}|u(T,x)-v(T,x)|\phi(T,x)\dd x\leq \int_{\R}|u_0(x)-v_0(x)|\phi(0,x)\dd x,
\end{align}
where we used that $t\mapsto |u(t,\cdot)-v(t,\cdot)|\phi(t,\cdot)$ is $L^1$-continuous which can be seen by a triangle inequality argument. Note that $\phi(0,x)=f(|x|)$, and so letting $f\to \mathbbm{1}_{(-\infty,r)}$ in $L^1$ sense, the left-hand side of \eqref{eq: thirdPartOfL2Contraction} becomes the left-hand side of \eqref{eq: L1ContractionExploitingHeigthBound}. When $f\to \mathbbm{1}_{(-\infty,r)}$ we also get from \eqref{eq: definitionOfPhiForL2Contraction} that
\begin{align}\label{eq: limitOfPhiForL2Contraction}
    \phi(0,x)\to \Big(e^{T|K|\ast }\mathbbm{1}_{(-\infty,r)}(|\cdot|-M(T))\Big)(x),
\end{align}
in $L^1$ sense. Denoting the right-hand side of \eqref{eq: limitOfPhiForL2Contraction} also by $\phi(0,x)$, it follows by Young's convolution inequality that
\begin{align}\label{eq: L2BoundForPhiInL2Contraction}
    \norm{\phi(0,x)}{L^2(\R)}\leq e^{T\k}[2r+2M(T)]^{\frac{1}{2}} = e^{T\k}\Big[2r+2CN^{\frac{2}{3}}\Big(T + \tfrac{3}{2}T^{\frac{2}{3}}\Big)\Big]^{\frac{1}{2}},
\end{align}
where $\k\coloneqq \norm{K}{L^1(\R)}$. Applying then the Cauchy--Schwarz inequality to the right-hand side of \eqref{eq: thirdPartOfL2Contraction}, and using the above $L^2$ bound for $\phi(0,x)$, we attain \eqref{eq: L1ContractionExploitingHeigthBound} (with $T$ substituting for $t$) for $\Psi(T,N,r)$ given by the right-hand side of \eqref{eq: L2BoundForPhiInL2Contraction}.
\end{proof}
We follow up with a tightness bound for entropy solutions with $L^2\cap L^\infty$ data.
\begin{lemma}\label{lem: tightnessOfL2Solutions}
There is a function $\Phi\colon[0,\infty)^2\times\R\to[0,\infty)$, increasing in all arguments, such that if $u$ is an entropy solution of \eqref{eq: theMainEquation} with initial data $u_0\in L^2\cap L^\infty(\R)$, then for any $t,r\geq 0$ and $N\geq \norm{u_0}{L^2(\R)}$
\begin{align}\label{eq: tightnessOfL2Solutions}
    \int_{|x|>r}u^2(t,x)\dd x \leq \int_\R u_0^2(x)\Phi(t,N,|x|-r)\dd x.
\end{align}
Moreover, 
\begin{align*}
    \lim_{\xi\to-\infty}\Phi(t,N,\xi)=&0, & \Phi(t,N,\xi)=e^{2t\k},\quad \xi>0,
\end{align*}
where $\k\coloneqq \norm{K}{L^1(\R)}$, and in particular, $\xi\mapsto \Phi(t,M,\xi)$ is a bounded function.
\end{lemma}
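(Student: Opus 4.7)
The plan is to follow the template of Lemma \ref{lem: L1ContractionExploitingHeigthBound}, now using the entropy pair $(\eta,q) = (u^2, \tfrac{2}{3}u^3)$. The key structural fact is that since $K$ is odd, $|K|$ is even, so the elementary inequality $2|ab|\leq a^2+b^2$ combined with Fubini yields, for any non-negative smooth $\varphi$,
\begin{align*}
\bigg|\int_\R 2u(K\ast u)\varphi\,\dd x\bigg| \leq \int_\R u^2\big(\k\varphi + |K|\ast\varphi\big)\,\dd x.
\end{align*}
Combining this with the flux bound $|\tfrac{2}{3}u^3\varphi_x|\leq \tfrac{2}{3}m(t)u^2|\varphi_x|$, where $m(t)=CN^{\frac{2}{3}}(1+t^{-\frac{1}{3}})$ is the height bound from \eqref{eq: smallMWHichIsAverageOfHeight} (valid by Corollary \ref{cor: decayingHeightBoundForEntropySolutions}), and splitting $\varphi=\theta\phi$ exactly as in the previous lemma, the entropy inequality becomes
\begin{align*}
0\leq \int_0^T\!\!\int_\R u^2\theta'\phi\,\dd x\,\dd t + \int_0^T\!\!\int_\R u^2\theta\big[\phi_t + \tfrac{2}{3}m(t)|\phi_x| + \k\phi + |K|\ast\phi\big]\,\dd x\,\dd t.
\end{align*}

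Next I construct $\phi$ so that the bracket is non-positive. Set $M(t)\coloneqq\int_0^t \tfrac{2}{3}m(s)\,\dd s$ and, for smooth non-decreasing $f\colon\R\to[0,1]$ with $f(\xi)=0$ for $\xi\leq 0$ and $f(\xi)=1$ for sufficiently large $\xi$, define
\begin{align*}
g(t,x) = f\big(|x|-r+M(T)-M(t)\big),\qquad \phi(t,x) = e^{\k(T-t)}\big[e^{(T-t)|K|\ast}g(t,\cdot)\big](x).
\end{align*}
A direct computation gives $\phi_t + \k\phi + |K|\ast\phi = e^{\k(T-t)}e^{(T-t)|K|\ast}g_t$; since $g_t = -\tfrac{2}{3}m(t)f'\leq 0$ and $|\phi_x|\leq e^{\k(T-t)}e^{(T-t)|K|\ast}f'$ (triangle inequality combined with $|g_x|=f'$), the bracket is indeed non-positive. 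Letting $\theta$ approximate $\mathbbm{1}_{(0,T)}$ as in \eqref{eq: thetaWhichApproximatesInterval}, and $f$ approximate $\mathbbm{1}_{(0,\infty)}$, the argument culminating in \eqref{eq: thirdPartOfL2Contraction} yields
\begin{align*}
\int_{|x|>r} u^2(T,x)\,\dd x \leq \int_\R u_0^2(x)\phi(0,x)\,\dd x,\qquad \phi(0,x) = e^{\k T}\big[e^{T|K|\ast}\mathbbm{1}_{|\cdot|>r-M(T)}\big](x).
\end{align*}

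It remains to rewrite $\phi(0,x)$ as a function of $\xi\coloneqq|x|-r$. Since $|K|$ is even and $\mathbbm{1}_{|\cdot|>r-M(T)}$ is symmetric about $0$, $\phi(0,x)$ depends on $x$ only through $|x|$; moreover, when $|x|<r-M(T)$ the support constraint in $\int|K|^{\ast n}(x-y)\mathbbm{1}_{|y|>r-M(T)}\,\dd y$ forces $|x-y|>-\xi-M(T)$. One may therefore set
\begin{align*}
\Phi(t,N,\xi) = \begin{cases} e^{2t\k},& \xi\geq -M(t),\\[2pt] e^{t\k}\displaystyle\sum_{n=0}^\infty \frac{t^n}{n!}\int_{|z|>-\xi-M(t)}|K|^{\ast n}(z)\,\dd z,& \xi<-M(t),\end{cases}
\end{align*}
which is monotone in each of $t,N,\xi$ (raising any argument either enlarges $M$ or the integration region); in particular $\Phi(t,N,\xi)=e^{2t\k}$ for $\xi>0$, and $\Phi(t,N,\xi)\to 0$ as $\xi\to-\infty$ follows by dominated convergence, each summand vanishing while dominated by $\k^n$ with summable total $e^{t\k}$.

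The principal obstacle is the oddness of $K$: without the resulting evenness of $|K|$, the dispersive contribution would not close into a self-contained inequality in $u^2$, and the subsolution construction underlying the entire dual-equation strategy would break down. Beyond this, the proof is a careful adaptation of the computations already performed in Lemma \ref{lem: L1ContractionExploitingHeigthBound}.
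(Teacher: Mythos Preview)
Your approach mirrors the paper's almost exactly: same entropy pair $(u^2,\tfrac23 u^3)$, same pointwise estimate $2u(K\ast u)\varphi\leq u^2(\k\varphi+|K|\ast\varphi)$, same dual-subsolution $\phi$ built from a non-decreasing profile $f$ composed with $|x|+M(T)-M(t)$, and essentially the same final weight $\Phi$ (the paper packages $e^{\k(T-t)}e^{(T-t)|K|\ast}$ as $e^{(T-t)\mathcal K\ast}$ with $\mathcal K=\k\delta+|K|$, but this is cosmetic).

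There is, however, one genuine gap. You invoke ``the argument culminating in \eqref{eq: thirdPartOfL2Contraction}'' to pass from the entropy inequality with compactly supported $\varphi$ to the inequality with your $\phi$. In Lemma~\ref{lem: L1ContractionExploitingHeigthBound} that step worked because the $\phi$ there (built from a non-\emph{increasing} $f$) was integrable together with its derivatives, so it could be approximated in $W^{1,1}$ by compactly supported functions. Your $\phi$, being built from a non-\emph{decreasing} $f$, satisfies $\phi(t,x)\to e^{2\k(T-t)}>0$ as $|x|\to\infty$; it is bounded but \emph{not} integrable, and the approximation argument from the previous lemma does not apply directly. The paper handles this by inserting an additional spatial cutoff $\rho(x)=\tilde\rho(x/N)$, obtaining the analogue of \eqref{eq: thirdPartOfL2Contraction} with $\rho\phi$ in place of $\phi$, and only then letting $N\to\infty$ via dominated convergence --- the dominator being $u^2$ (integrable, since $\|u(t)\|_{L^2}\leq\|u_0\|_{L^2}$) times the bounded function $\phi$. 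You should add this step.

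A minor remark: your closing comment overstates the role of the oddness of $K$. The inequality $\int 2u(K\ast u)\varphi\,\dd x\leq\int u^2(\k\varphi+|K|\ast\varphi)\,\dd x$ and the subsequent construction go through with any $K\in L^1$; if $|K|$ were not even one would simply get the reflected kernel $|K(-\cdot)|$ in place of $|K|$ at one point, which changes nothing of substance. Evenness of $|K|$ is a notational convenience here, not a structural obstacle.
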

\begin{proof}
Pick arbitrary initial data $u_0\in L^2\cap L^\infty (\R)$ and let $u$ denote the corresponding entropy solution of \eqref{eq: theMainEquation}. Writing out the entropy inequality \eqref{eq: entropyInequality} for $u$ using the entropy pair $(\eta(u),q(u))=(u^2,\tfrac{2}{3}u^3)$ and a non-negative test function $\varphi\in C^\infty_c((0,T)\times \R)$, with $T\in(0,\infty)$ fixed, we get
\begin{equation}\label{eq: squareEntropyInequality}
    \begin{split}
        0\leq \int_0^T\int_{\R} u^2\varphi_t+\tfrac{2}{3}u^3\varphi_x +2u (K\ast u)\varphi \,\mathrm dx\mathrm dt.
    \end{split}
\end{equation}
By the height bound \eqref{eq: heightBoundWhenSIsZero} of Corollary \ref{cor: decayingHeightBoundForEntropySolutions}, we have $\norm{u(t)}{L^\infty(\R)}\leq m(t)$ where $m(t)$ is as defined in \eqref{eq: smallMWHichIsAverageOfHeight}, and so the second term of the above integrand satisfies 
\begin{align*}
    \tfrac{2}{3}u^3\varphi_x\leq u^2\Big[ \tfrac{2}{3} m(t) |\varphi_x|\Big].
\end{align*}
Additionally, the third part of the integrand satisfies
\begin{align*}
    \int_\R 2u (K\ast u)\varphi \,\mathrm dx =&\, \int_\R\int_\R 2u(t,x)u(t,y) K(x-y) \varphi(t,x) \,\dd y\mathrm dx \\
    \leq &\, \int_\R\int_\R \Big[|u(t,x)|^2 + |u(t,y)|^2\Big] |K(x-y)| \varphi(t,x)\,\dd y\mathrm dx \\
    = &\, \int_\R u^2 \Big[\k \varphi + |K|\ast \varphi\Big]\dd x .
\end{align*}
Inserting these two bounds in \eqref{eq: squareEntropyInequality} we get for any non-negative $\varphi\in C_c^\infty((0,T)\times \R)$
\begin{equation}\label{eq: squareEntropyInequalityTwo}
    \begin{split}
        0\leq \int_0^T\int_{\R} u^2\Big[\varphi_t+\tfrac{2}{3}m(t)|\varphi_x| + \mathcal{K}\ast \varphi\Big] \,\mathrm dx\mathrm dt,
    \end{split}
\end{equation}
where we introduced the measure $\mathcal{K}\coloneqq \k\delta + |K|$, where $\d$ is the Dirac measure. Like in the previous proof, we proceed in a manner similar to the second half of the proof of Proposition \ref{prop: uniquenessAndL1Contraction}, though some necessary changes are made. We set $\varphi(t,x)=\theta(t)\rho(x)\phi(t,x)$ for three smooth non-negative functions on $[0,T]\times \R$ with $\theta$ and $\rho$ having compact support in $(0,T)$ and $\R$ respectively. Additionally, while $\phi$ need not be compactly supported, we require $\phi$ and its derivatives to be bounded. Inserting this in \eqref{eq: squareEntropyInequalityTwo} we get
\begin{equation}\label{eq: squareEntropyInequalityThree}
    \begin{split}
        0\leq \int_0^T\int_{\R} u^2\theta'\rho\phi\dd x\dd t + \int_0^\infty\int_{\R} u^2\theta\Big[\rho\phi_t+\tfrac{2}{3}m(t)|(\rho\phi)_x| + \mathcal{K}\ast (\rho\phi)\Big] \,\mathrm dx\mathrm dt.
    \end{split}
\end{equation}
 Letting $\theta$ approximate $\mathbbm{1}_{(0,T)}$ in a similar (smooth) manner as done by the sequence \eqref{eq: thetaWhichApproximatesInterval}, we may from \eqref{eq: squareEntropyInequalityThree} conclude that 
\begin{equation}\label{eq: squareEntropyInequalityFour}
    \begin{split}
        \int_{\R}u^2(T,x)\rho(x) \phi(T,x)\dd x\leq&\, \int_{\R} u_0^2(x)\rho(x)\phi(0,x)\dd x\\
        &\,+ \int_0^\infty\int_{\R} u^2\Big[\rho\phi_t+\tfrac{2}{3}m(t)|(\rho\phi)_x| + \mathcal{K}\ast (\rho\phi)\Big] \,\mathrm dx\mathrm dt,
    \end{split}
\end{equation}
where we used that $t\mapsto u^2(t,\cdot)\rho(\cdot)\phi(t,\cdot)$ is $L^1$-continuous which can be seen by a triangle inequality argument. Next, we set $\rho(x)=\tilde{\rho}(x/N)$ where $\tilde{\rho}\in C^\infty_c(\R)$ is non-negative and satisfies $\tilde{\rho}(0)=1$. Letting $N\to\infty$, \eqref{eq: squareEntropyInequalityFour} yields by the dominated convergence theorem
\begin{equation}\label{eq: squareEntropyInequalityFive}
    \begin{split}
        \int_{\R}u^2(T,x) \phi(T,x)\dd x\leq&\, \int_{\R} u_0^2(x)\phi(0,x)\dd x\\
        &\,+ \int_0^\infty\int_{\R} u^2\Big[\phi_t+\tfrac{2}{3}m(t)|\phi_x| + \mathcal{K}\ast \phi\Big] \,\mathrm dx\mathrm dt,
    \end{split}
\end{equation}
where the convergence of the integrals follows from the boundness of $\phi$ (and its derivatives) combined with $\norm{u(t)}{L^2(\R)}\leq \norm{u_0}{L^2(\R)}$ for all $t\in[0,T]$. To rid ourselves of the last integral in \eqref{eq: squareEntropyInequalityFive}, we perform a similar trick as done for \eqref{eq: katoInequalityTwo} and \eqref{eq: secondPartOfL2Contraction}, but with a different $f$; we here let $f\colon\R\to[0,1]$ be a \textit{non-decreasing} function with bounded derivatives. Define further $g$ by
\begin{align*}
    g(t,x)\coloneqq f(|x| + M(T) -M(t)),
\end{align*}
where $M(t)$ denotes 
\begin{align}\label{eq: definitionOfLargeMForTightnessBound}
    M(t)\coloneqq \int_0^t \tfrac{2}{3}m(s)\dd s = CN^{\frac{2}{3}}\Big(\tfrac{2}{3}t + t^{\frac{2}{3}}\Big),
\end{align}
and analogues to \eqref{eq: definitionOfPhi}, we set $\phi$ to be
\begin{align*}
    \phi(t,x) = \Big(e^{(T-t)\mathcal{K}\ast}g(t,\cdot)\Big)(x).
\end{align*}
We conclude by similar arguments as those following \eqref{eq: definitionOfPhi} that the square bracket in \eqref{eq: squareEntropyInequalityFive} is non-positive. Thus, removing the non-positive integral in \eqref{eq: squareEntropyInequalityFive} we get
\begin{equation}\label{eq: squareEntropyInequalitySix}
        \int_{\R}u^2(T,x) f(|x|)\dd x\leq \int_{\R} u_0^2(x)\Big(e^{T\mathcal{K}\ast} f(|\cdot| + M(T))\Big)(x)\dd x,
\end{equation}
where we used the explicit expressions for $\phi(T,x)$ and $\phi(0,x)$. Letting $f\to \mathbbm{1}_{(r, \infty)}$ pointwise a.e. it is clear that the left-hand side of \eqref{eq: squareEntropyInequalitySix} converges to $\int_{|x|>r}u^2(T)\dd x$. As for the right-hand side, we get the cumbersome term $e^{T\mathcal{K}\ast}\mathbbm{1}_{(r,\infty)}(|\cdot|+MT)$ which we now simplify. Let the Borel measure $\nu_T$ be defined by the relation $\nu_T\ast = e^{T\mathcal{K}\ast}$ and observe that we for $x\in\R$ have 
\begin{align}\label{eq: simplerExpressionForTailWeight}
    \Big(\nu_T\ast \mathbbm{1}_{(r,\infty)}(|\cdot| + M(T))\Big)(x) =&\, \int_{|x-y|+M(T) > r}\dd\nu_T(y) \leq \int_{|x|-r+M(T) > -|y|}\dd\nu_T(y).
\end{align}
Thus, we define $\Phi(T,N,|x|-r)$ to be the latter expression after substituting for $M(T)$ using \eqref{eq: definitionOfLargeMForTightnessBound}. Inserting this in \eqref{eq: squareEntropyInequalitySix} we get exactly \eqref{eq: tightnessOfL2Solutions} with $T$ substituting for $t$. The properties of $\Phi$ stated in the lemma can be read directly from \eqref{eq: simplerExpressionForTailWeight} when setting $\xi= |x|-r$ together with the fact that $T\mapsto \nu_T$ is increasing (in the canonical sense) and $\int_{\R}d\nu_T = e^{T\mathcal{K}\ast}1=e^{2T\k}$. 
\end{proof}
We may now prove the remaining part of Theorem \ref{thm: wellPosedness}.
\begin{proposition}\label{prop: stabilityOfEntropySolutionsWithL2Data}
 Let two sequences $(t_k)_{k\in\N}\subset [0,\infty)$ and $(u_{0,k})_{k\in\N}\subset L^2\cap L^\infty(\R)$ admit limits
\begin{align*}
    \lim_{k\to\infty}|t_k-t|=&0, & \lim_{k\to\infty}\norm{u_{0,k}-u_0}{L^2(\R)}=0,
\end{align*}
with $t\in [0,\infty)$ and $u_0\in L^2\cap L^\infty(\R)$. Letting $(u_k)_{k\in\N}$ and $u$ denote the entropy solutions of \eqref{eq: theMainEquation} corresponding to the initial data $(u_{0,k})_{k\in\N}$ and $u_0$ respectively, we have
\begin{align*}
    \lim_{k\to\infty}\norm{u_k(t_k)-u(t)}{L^2(\R)}=0.
\end{align*}
In particular, entropy solutions of \eqref{eq: theMainEquation} with $L^2\cap L^\infty$ data are continuous from $[0,\infty)$ to $L^2(\R)$.
\end{proposition}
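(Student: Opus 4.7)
My plan is to handle the $L^2$ convergence via the triangle inequality
\begin{align*}
\norm{u_k(t_k) - u(t)}{L^2(\R)} \leq \norm{u_k(t_k) - u(t_k)}{L^2(\R)} + \norm{u(t_k) - u(t)}{L^2(\R)},
\end{align*}
estimating the two pieces by different means, with the endpoint case $t = 0$ treated separately. Fix $T$ so that $t_k \in [0,T]$ eventually, and set $N = \sup_k\norm{u_{0,k}}{L^2(\R)} + \norm{u_0}{L^2(\R)}$.

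For the first term in the generic case $t > 0$, I would split it spatially over $[-r,r]$ and its complement. The local part is bounded by
\begin{align*}
\norm{u_k(t_k) - u(t_k)}{L^2([-r,r])}^2 \leq \bigl(\norm{u_k(t_k)}{L^\infty(\R)} + \norm{u(t_k)}{L^\infty(\R)}\bigr)\norm{u_k(t_k) - u(t_k)}{L^1([-r,r])},
\end{align*}
where the $L^\infty$ norms are controlled uniformly in $k$ by the height bound of Corollary \ref{cor: decayingHeightBoundForEntropySolutions} (valid once $t_k \geq t/2$, which holds eventually), and the $L^1$ norm vanishes as $k\to\infty$ by Lemma \ref{lem: L1ContractionExploitingHeigthBound}. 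The tail is handled via Lemma \ref{lem: tightnessOfL2Solutions} together with the $L^2$-tightness of $(u_{0,k})_{k\in\N}$ (automatic from $L^2$ convergence) and the bounded/decaying nature of $\Phi$, yielding tails that vanish as $r \to \infty$ uniformly in $k$; analogously for $u(t_k)$.

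For the second term I would first establish the auxiliary fact that $u \in C([0,\infty), L^2(\R))$. The $L^1_{\loc}$-time-continuity from Corollary \ref{cor: generalSolutionsAreLimitsOfBVSolutions} together with the uniform $L^2$ bound $\norm{u(s)}{L^2(\R)} \leq \norm{u_0}{L^2(\R)}$ gives weak $L^2$ continuity at every $t$, and the non-increasing character of $s\mapsto \norm{u(s)}{L^2(\R)}$ combined with weak lower semicontinuity of the $L^2$ norm forces $\norm{u(s)}{L^2(\R)} \to \norm{u(t)}{L^2(\R)}$ as $s\to t$. Weak plus norm convergence then upgrades to strong $L^2$ convergence, finishing the case $t > 0$.

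The main obstacle is the case $t = 0$ with $t_k \to 0^+$, since then the height bound degenerates and the local $L^2$ estimate above fails. To handle it, I would exploit that entropy solutions are weak solutions (via the pairs $(\pm u, \pm u^2/2)$): for $\phi \in C_c^\infty(\R)$, inserting time-cutoff test functions into the weak formulation yields
\begin{align*}
\inr{u_k(t_k) - u_{0,k}}{\phi} = \int_0^{t_k}\int_\R \bigl[\tfrac{1}{2} u_k^2 \phi'(x) - (G \ast u_k)\phi'(x)\bigr]\dd x\dd s,
\end{align*}
and Young's convolution inequality with the uniform $L^2$ bound $\norm{u_k(s)}{L^2(\R)} \leq N$ give $|\inr{u_k(t_k) - u_{0,k}}{\phi}|\leq t_k\cdot C(\phi, G, N) \to 0$. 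Together with $u_{0,k}\to u_0$ in $L^2(\R)$ and a density argument (using the uniform $L^2$ bound), this yields $u_k(t_k)\rightharpoonup u_0$ weakly in $L^2(\R)$. Combining with the norm convergence $\norm{u_k(t_k)}{L^2(\R)} \to \norm{u_0}{L^2(\R)}$ (from $\norm{u_k(t_k)}{L^2(\R)}\leq \norm{u_{0,k}}{L^2(\R)}$ and weak lower semicontinuity) upgrades this to strong $L^2$ convergence. The ``in particular'' claim then follows by specializing $u_{0,k} = u_0$.
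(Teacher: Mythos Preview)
Your overall architecture---split into $t>0$ and $t=0$, use the height bound and Lemma~\ref{lem: L1ContractionExploitingHeigthBound} for the local $L^2$ control, Lemma~\ref{lem: tightnessOfL2Solutions} for tails, and weak-plus-norm convergence at $t=0$---matches the paper's. Your $t=0$ argument via the weak formulation is a clean variant of the paper's (which instead routes weak convergence through the local $L^1$ limit coming from Lemma~\ref{lem: L1ContractionExploitingHeigthBound}); both are fine.

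There is, however, a genuine gap in your treatment of the second term $\norm{u(t_k)-u(t)}{L^2(\R)}$ when $t>0$. You claim that the non-increasing character of $s\mapsto\norm{u(s)}{L^2(\R)}$ together with weak lower semicontinuity forces $\norm{u(s)}{L^2(\R)}\to\norm{u(t)}{L^2(\R)}$. This works only from the right: for $s\to t^+$ monotonicity gives $\limsup\norm{u(s)}{L^2}\le\norm{u(t)}{L^2}$, which combines with weak lsc to yield norm convergence. From the left, monotonicity gives $\norm{u(s)}{L^2}\ge\norm{u(t)}{L^2}$ and weak lsc gives $\liminf\norm{u(s)}{L^2}\ge\norm{u(t)}{L^2}$; these inequalities point the same way and do not rule out a jump. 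A non-increasing function can have a downward jump at $t$, and nothing you have written excludes this.

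The fix is already in your hands: treat the second term exactly as you treat the first. For $s$ near $t>0$, apply Lemma~\ref{lem: tightnessOfL2Solutions} to $u$ itself to make $\int_{|x|>r}u(s)^2\,\dd x$ small uniformly in $s$, and on $[-r,r]$ combine the height bound of Corollary~\ref{cor: decayingHeightBoundForEntropySolutions} (uniform for $s\ge t/2$) with the $L^1_{\loc}$-continuity of $u$ from Corollary~\ref{cor: generalSolutionsAreLimitsOfBVSolutions}. This is precisely what the paper does: it never separates the two terms at the $L^2$ level but instead bounds $\norm{u_k(t_k)-u(t)}{L^2([-r,r])}^2$ by the common $L^\infty$ bound times $\norm{u_k(t_k)-u(t)}{L^1([-r,r])}$, and then splits in $L^1([-r,r])$, where the time-continuity of $u$ is already available two-sidedly.
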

\begin{proof}
Suppose first that $t>0$. As $t_k\to t$ there is a $T\in(0,\infty)$ such that $(t_k)_{k\in\N}\subset [0,T]$. Similarly, there is an $N$ such that $N\geq \norm{v_0}{L^2(\R)}$ for every $v_0\in \{u_{0,1},u_{0,2},\dots,u_0\}$; observe that such an $N$ satisfies $N\geq \norm{v(t)}{L^2}$ for all $t\in[0,T]$ and $v$ ranging over the corresponding entropy solutions. As the function $\Phi$ from Lemma \ref{lem: tightnessOfL2Solutions} was increasing in its arguments, we infer for all $k\in\N$ and $r>0$ that
\begin{align*}
    \int_{|x|>r}u_k^2(t_k,x)\dd x \leq \int_\R u_{0,k}^2(x)\Phi(T,N,|x|-r).
\end{align*}
Furthermore, as $\xi\mapsto \Phi(T,M,\xi)$ is bounded while $u_{0,k}^2\to u_0^2$ in $L^1(\R)$ as $k\to\infty$, it follows that
\begin{align}\label{eq: uniformTightnessOfL2Sequence}
    \limsup_{k\to\infty}\int_{|x|>r}u_k^2(t_k,x)\dd x\leq \int_\R u_0^2(x)\Phi(T,M,|x|-r),
\end{align}
for any $r>0$. Since $u_0^2$ is integrable and $\lim_{\xi\to-\infty}\Phi(T,M,\xi)=0$, we may for any $\e>0$ pick a sufficiently large $r>0$ such that the right-hand side of \eqref{eq: uniformTightnessOfL2Sequence} is smaller than $\e^2$. For such a couple of constants $\e,r>0$ we find
\begin{align}\label{eq: L2DifferenceBetweenSequenceAndLimit}
    \limsup_{k\to\infty}\norm{u_k(t_k)-u(t)}{L^2(\R)}\leq 2\e + \limsup_{k\to\infty}\norm{u_k(t_k)-u(t)}{L^2([-r,r])}.
\end{align}
To deal with the rightmost term in \eqref{eq: L2DifferenceBetweenSequenceAndLimit}, we yet again let $m$ be the function defined in \eqref{eq: smallMWHichIsAverageOfHeight} using the above $N$. As $t>0$, there are only a finite number of elements in $(t_k)_{k\in\N}$ smaller than $t/2$; without loss of generality, we shall assume there are none. By the height bound \eqref{eq: heightBoundWhenSIsZero} from Corollary \ref{cor: decayingHeightBoundForEntropySolutions} and $m$ being decreasing in $t$, it then follows that $\norm{v}{L^\infty(\R)}\leq m(t/2)$ for every $v\in \{u_{1}(t_1),u_{2}(t_2),\dots,u(t)\}$. Thus, 
\begin{align*}
    \norm{u_k(t_k)-u(t)}{L^2([-r,r])}^2\leq&\, 2m(t/2)\norm{u_k(t_k)-u(t)}{L^1([-r,r])},
\end{align*}
and by the triangle inequality, we further have
\begin{align*}
    \norm{u_k(t_k)-u(t)}{L^1([-r,r])}\leq \norm{u_k(t_k)-u(t_k)}{L^1([-r,r])} +\norm{u(t_k)-u(t)}{L^1([-r,r])}\to0,
\end{align*}
as $k\to0$. Here we used the $L^1_{\loc}$-continuity of $y\mapsto u(t)$ and Lemma \ref{lem: L1ContractionExploitingHeigthBound}. Thus, the last term of \eqref{eq: L2DifferenceBetweenSequenceAndLimit} is zero, and as $\e>0$ was arbitrary, we conclude $\limsup_{k\to\infty}\norm{u_k(t_k)-u(t)}{L^2(\R)}=0$.

Suppose next $t=0$. As above, $u_k(t_k)$ converges to $u(0)=u_0$ in $L^1_{\loc}(\R)$, and in particular, the convergence holds in the sense of distributions. Moreover, we have norm convergence as 
\begin{align*}
  \limsup_{k\to\infty} \norm{u_k(t_k)}{L^2(\R)}\leq \limsup_{k\to\infty} \norm{u_{0,k}}{L^2(\R)}=\norm{u_0}{L^2(\R)},
\end{align*}
while $\norm{u_0}{L^2(\R)}\leq \liminf_{k\to\infty} \norm{u_k(t_k)}{L^2(\R)}$ follows from Fatou's lemma. Thus, we conclude $\norm{u_k(t_k)-u_0}{L^2(\R)}\to 0$ as $k\to\infty$. With the stability result proved, the $L^2$-continuity of $t\mapsto u(t)$ follows by setting $u_{0,k}=u_0$ for all $k\in\N$. 
\end{proof}
We end the section by proving Corollary \ref{cor: extensionToPureL2Data}.
\begin{proof}[Proof of Corollary \ref{cor: extensionToPureL2Data}]
The solution mapping $S$ is by Proposition \ref{prop: stabilityOfEntropySolutionsWithL2Data} jointly continuous from $[0,\infty)\times (L^2\cap L^\infty(\R))^*$ to $L^2(\R)$, where $(L^2\cap L^\infty(\R))^*$ denotes the set $L^2\cap L^\infty(\R)$ equipped with its $L^2$ subspace-topology. Seeking to extend $S$ to all of $[0,\infty)\times L^2(\R)$ in a continuous manner, we note that we have only one choice: whenever a sequence $(u_{0,k})_{k\in \N}\in L^2\cap L^\infty(\R)$ converges in $L^2(\R)$, it follows from Lemma \ref{lem: L1ContractionExploitingHeigthBound} that the corresponding entropy solutions $(u_k)_{k\in\N}$ form a Cauchy sequence in the Fréchet space $C([0,\infty),L^1_{\loc}(\R))$, and thus they converge to a unique element $u\in C([0,\infty),L^1_{\loc}(\R))$ in the appropriate topology. We now argue that $u$ inherits all the nice properties of entropy solutions of \eqref{eq: theMainEquation} established so far, apart from being bounded at $t=0$. Denoting $u_0\in L^2(\R)$ for the $L^2$ limit of $(u_{0,k})_{k\in \N}$, we have by Fatou's lemma
\begin{align*}
    \norm{u(t)}{L^2(\R)}\leq \liminf_{k\to\infty}\norm{u_k(t)}{L^2(\R)}\leq\liminf_{k\to\infty}\norm{u_{0,k}}{L^2(\R)}= \norm{u_0}{L^2(\R)}.
\end{align*}
Moreover, as each $u_{k}$ satisfy the height bound \eqref{eq: heightBoundWhenSIsZero} this bound also carries over to $u$, and thus $u$ is locally bounded in $(0,\infty)\times\R$. Similarly, as each $u_k$ satisfy the entropy inequalities \eqref{eq: entropyInequality}, the same is true for $u$ by a limit argument exploiting the uniform bound of $(u_k)_{k\in\N}$ on the support of $\varphi$ and the fact that $\eta$ and $q$ are smooth; in particular, $u$ is a weak solution of \eqref{eq: theMainEquation}. Even Lemma \ref{lem: L1ContractionExploitingHeigthBound} and Lemma \ref{lem: tightnessOfL2Solutions} carries over to $u$ by approximation. In conclusion, $u$ -- and all other weak solutions obtained this way -- satisfy every property used for entropy solutions in the proof of Proposition \ref{prop: stabilityOfEntropySolutionsWithL2Data}, and so the proposition extends to these weak solutions. Consequently, $S$ is continuous on the larger set $[0,\infty)\times L^2(\R)$, and the proof is complete.
\end{proof}

\section{One-sided Hölder regularity for entropy solutions}\label{sec: longTermBounds}
In this section we show that entropy solutions of \eqref{eq: theMainEquation} with $L^2\cap L^\infty$ data satisfy one-sided Hölder conditions with time-decreasing coefficients. As Subsection \ref{sec: L2ContinuityAndStability} exploits Corollary \ref{cor: decayingHeightBoundForEntropySolutions}, which is proved using the results established here, we stress that the coming analysis will only depend on the results of Subsection \ref{sec: uniquenessOfEntropySolutions} and \ref{sec: existenceOfEntropySolutions}, thus avoiding a circular argument. In Subsection \ref{sec: preliminaryResultsForLongTermBounds} we introduce the necessary building blocks for Subsection \ref{sec: derivingAModulusOfGrowth} where the Hölder conditions are constructed; Theorem \ref{thm: oneSidedHolderRegularity} is proved at the very end of this section. Central in this section is the following object, which in classical terms can be described as a modulus of right upper semi-continuity.
\begin{definition}\label{def: modulusOfGrowth}
We say that a smooth and strictly increasing function $\omega\colon(0,\infty)\to(0,\infty)$ is a \textit{modulus of growth} for $v\colon\R\to\R$ if for all $h>0$
\begin{align*}
    \esssup_{x\in\R}\Big[v(x+h)-v(x)\Big]\leq \omega(h).
\end{align*}
\end{definition}
The requirement that $\omega$ be smooth and strictly increasing is for technical convenience. Note also that we did not require $\omega(0+)=0$; this is to include the expression \eqref{eq: generalEvolutionOfOleinikEstimateUnderConvolutionSolutionMap} when $s=0$.

\subsection{Preliminary results}\label{sec: preliminaryResultsForLongTermBounds}
The classical Ole\v{ı}nik estimate \cite{MR2574377} for entropy solutions of Burgers' equation is for $(t,x)\in \R^+\times \R$ and $h\geq0$ given by
\begin{align}\label{eq: classicalOleinikEstimateForBurgersSolutions}
    u(t,x+h)-u(t,x)\leq \frac{h}{t}.
\end{align}
For a fixed $t>0$, this one-sided Lipschitz condition (or modulus of growth) restricts how fast $x\mapsto u(t,x)$ can grow, but not how fast it can decrease, thus allowing for jump discontinuities (shocks) whose left limit is above the right. Interestingly, when the initial data of Burgers' equation satisfies $u_0\in L^p(\R)$ for some $p\in [1,\infty)$, one can for the corresponding entropy solution $u$ use \eqref{eq: classicalOleinikEstimateForBurgersSolutions} to attain
\begin{align}\label{eq: decayOfSolutionsOfBurgersEquation}
    \norm{u(t)}{L^\infty(\R)}^{p+1}\leq\tfrac{p+1}{t}\norm{u(t)}{L^p(\R)}^p\leq \tfrac{p+1}{t}\norm{u_0}{L^p(\R)}^p,
\end{align}
where the rightmost inequality is just the classical $L^p$ bound for Burgers' equation, and thus, the height of $u(t)=u(t,\cdot)$ tends to zero as $t\to\infty$. We omit the proof of \eqref{eq: decayOfSolutionsOfBurgersEquation}, which is similar to that of the next lemma where we provide a general method for bounding the height of a function $u\in L^2(\R)$ admitting a modulus of growth $\omega$. We focus on $L^2(\R)$ because other $L^p$ norms might fail to be non-increasing for entropy solutions of \eqref{eq: theMainEquation}; the generalization of  \eqref{eq: classicalOleinikEstimateForBurgersSolutions} will require a generalization of \eqref{eq: decayOfSolutionsOfBurgersEquation}, so $p=2$ is the natural choice as $\norm{u(t)}{L^2(\R)}\leq \norm{u_0}{L^2(\R)}$ for entropy solutions of \eqref{eq: theMainEquation}. In the coming lemma we also provide for later convenience a bound on the following seminorm defined for $v\in L^\infty(\R)$ by
\begin{align}\label{eq: definitionOfSeminormHeight}
    |v|_\infty \coloneqq \esssup_{x,y\in \R}\frac{v(x) - v(y)}{2}.
\end{align}
As $|v|_{\infty}\leq \norm{v}{L^\infty(\R)}$, any bound on $\norm{v}{L^\infty(\R)}$ obviously carries over to $|v|_{\infty}$. Note however, that the next lemma bounds $|v|_{\infty}$ sharper than it does $\norm{v}{L^\infty(\R)}$.
Finally, we mention that the extra assumptions posed on $\omega$ in the lemma are only for technical simplicity, as the lemma holds more generally.

\begin{lemma}\label{lem: theBoundOnSupFromL2Norm}
Let $v\in L^2 (\R)$ admit a modulus of growth $\omega$ that satisfies $\omega(0+)=0$ and $\omega(\infty)=\infty$. Then $v\in L^2\cap L^\infty(\R)$ and moreover
\begin{align}\label{eq: theBoundOnSupFromL2Norm}
    \norm{v}{L^2(\R)}^2\geq&\, F\Big(\norm{v}{L^\infty(\R)}\Big),\\ \label{eq: theBoundOnDoubleSupFromL2Norm}
      \tfrac{1}{2}\norm{v}{L^2(\R)}^2\geq&\, F\Big(|v|_{\infty}\Big),
\end{align}
where $F$ is the strictly increasing and convex function
\begin{align}\label{eq: definitionOfFOfInverseOmega}
    F(y)\coloneqq 2\int_0^{y}\int_0^{y_1} \omega^{-1}(y_2)\mathrm{d}y_2\mathrm{d}y_1.
\end{align}
\end{lemma}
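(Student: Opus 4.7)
The plan is to convert the one-sided growth bound on $v$ into a lower bound on the Lebesgue measure of its super- and sub-level sets, and then recover $\norm{v}{L^2(\R)}^2$ via the layer-cake formula. A Fubini rewrite of \eqref{eq: definitionOfFOfInverseOmega} gives $F(y) = 2\int_0^y (y - s)\,\omega^{-1}(s)\,\dd s$, which is the form that will line up exactly with the resulting integrals.

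First I would show $v\in L^\infty(\R)$. Write $M := \esssup_{x\in\R} v(x)$ and $m := \essinf_{x\in\R} v(x)$. If $M = +\infty$, then for every large $N > 0$ the set $\{v > N\}$ has positive measure, and the propagation argument of the next paragraph (which does not require $v$ to be essentially bounded) gives $|\{v > N/2\}| \geq \omega^{-1}(N/2)$; since $\omega(\infty) = \infty$, this forces $\int_\R v^2 \geq (N/2)^2\,\omega^{-1}(N/2) \to \infty$ as $N \to \infty$, contradicting $v \in L^2(\R)$. The mirror argument rules out $m = -\infty$. The same $L^2$-consideration further forces $M \geq 0 \geq m$, since otherwise $v^2$ would be bounded below by a positive constant on a set of infinite measure; in particular $\norm{v}{L^\infty(\R)} = \max(M, -m)$.

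The heart of the proof is the level-set bound: for every $\tau \in (m, M)$,
\[
|\{v > \tau\}| \geq \omega^{-1}(M - \tau), \qquad |\{v < \tau\}| \geq \omega^{-1}(\tau - m).
\]
To prove the first, fix such a $\tau$, choose $\delta \in (0, M - \tau)$, and set $h_0 := \omega^{-1}(M - \delta - \tau)$ and $A_\delta := \{v > M - \delta\}$; the latter has positive measure by the definition of $M$. For each fixed $h \in (0, h_0)$, the substitution $y = x + h$ in the modulus inequality yields $v(y) - v(y - h) \leq \omega(h) < M - \delta - \tau$ for a.e.\ $y$, so the slice $\{y \in A_\delta : v(y - h) \leq \tau\}$ has measure zero. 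Fubini on $A_\delta \times (0, h_0)$ then produces a single $y \in A_\delta$ at which $v(y - h) > \tau$ for a.e.\ $h \in (0, h_0)$, giving $|\{v > \tau\}| \geq h_0$; letting $\delta \to 0$ yields the claim (using continuity of $\omega^{-1}$). The second estimate is obtained symmetrically by propagating to the right from near-minimum points.

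Putting the pieces together, the layer-cake formula combined with the level-set bound gives $\int_\R v_+^2\,\dd x \geq \int_0^M 2t\,\omega^{-1}(M - t)\,\dd t$, and the substitution $u = M - t$ together with the Fubini form of $F$ identifies this with $F(M)$; symmetrically $\int_\R v_-^2\,\dd x \geq F(-m)$. Monotonicity and non-negativity of $F$ then yield \eqref{eq: theBoundOnSupFromL2Norm} via $\norm{v}{L^2(\R)}^2 \geq \max\{F(M), F(-m)\} = F(\max(M,-m)) = F(\norm{v}{L^\infty(\R)})$. For \eqref{eq: theBoundOnDoubleSupFromL2Norm}, the identity $|v|_\infty = \tfrac{1}{2}(M + (-m))$, the non-negativity of both $M$ and $-m$, and the convexity of $F$ (which follows from $F'' = 2\omega^{-1} \geq 0$) give $\norm{v}{L^2(\R)}^2 \geq F(M) + F(-m) \geq 2F\!\left(\tfrac{M - m}{2}\right) = 2 F(|v|_\infty)$. \textbf{The main obstacle} is the measure-theoretic propagation in the level-set bound: the modulus is an a.e.-in-$x$ statement for each fixed $h$, so extending it to hold along the entire translate $(y - h_0, y)$ of a single point $y$ requires the Fubini selection above; every subsequent step is mechanical.
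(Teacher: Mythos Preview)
Your proof is correct and reaches the same conclusions, but the route differs from the paper's. The paper first invokes the appendix Lemma~\ref{lem: modulusOfGrowthImpliesLeftAndRightLimits} to pass to a left-continuous representative of $v$, so that $v(x)$ is well defined at every point; then for a fixed $x$ with, say, $v(x)\geq 0$, it uses the modulus of growth directly to get $v(x-h)\geq v(x)-\omega(h)\geq 0$ on $h\in(0,\omega^{-1}(v(x))]$, squares, integrates in $h$, and identifies the result with $F(|v(x)|)$ after the change of variables $h=\omega^{-1}(y)$ and two integrations by parts. Taking the supremum in $x$ gives \eqref{eq: theBoundOnSupFromL2Norm}, and the second inequality follows by applying the first to $v_+$ and $-v_-$ together with Jensen's inequality for the convex $F$.

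Your argument instead works entirely at the level of measurable representatives: you establish the level-set bounds $|\{v>\tau\}|\geq \omega^{-1}(M-\tau)$ via a Fubini selection (which is exactly what substitutes for the pointwise representative), feed these into the layer-cake formula, and identify the outcome with the Fubini form $F(y)=2\int_0^y(y-s)\omega^{-1}(s)\,\dd s$. What this buys you is that you never need the existence of one-sided limits (Lemma~\ref{lem: modulusOfGrowthImpliesLeftAndRightLimits}); the price is the extra Fubini step, which you correctly flag as the only delicate point. The paper's approach is shorter and more direct once the left-continuous representative is in hand; yours is self-contained. The final step---splitting $\norm{v}{L^2}^2=\int v_+^2+\int v_-^2\geq F(M)+F(-m)$ and applying midpoint convexity of $F$---is essentially identical in both proofs.
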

\begin{proof}
By Lemma \ref{lem: modulusOfGrowthImpliesLeftAndRightLimits} from the appendix we may assume $v$ to be left-continuous, and in particular, well defined at every point. Then, for all $x\in\R$ such that $v(x)\geq0$ we have for $h\in (0,\omega^{-1}(v(x))]$
\begin{align*}
v(x-h)\geq v(x)-\omega(h)\geq 0,
\end{align*}
and similarly, for all $x\in \R$ such that $v(x)<0$ we have for $h\in (0,\omega^{-1}(-v(x))]$
\begin{align*}
   v(x+h)\leq v(x)+\omega(h)\leq 0.
\end{align*}
Squaring each of these inequalities (the bottom one would flip direction) and integrating over $h\in(0,\omega^{-1}(|v(x)|)]$, yields in both cases
\begin{align}\label{eq: quarterWayAtProvingBoundOnSupFromL2Norm}
    \norm{v}{L^2(\R)}^2\geq \int_0^{\omega^{-1}(|v(x)|)} (|v(x)|-\omega(h))^2 \mathrm{d}h,
\end{align}
where the left-hand side has been replaced by the upper bound $\norm{v}{L^2(\R)}^2$. Performing the change of variables $h=\omega^{-1}(y)$ the right-hand side of \eqref{eq: quarterWayAtProvingBoundOnSupFromL2Norm} can further be written
\begin{align*}
    \int_0^{|v(x)|} (|v(x)|-y)^2 \mathrm{d}\omega^{-1}(y) =&\,  2\int_0^{|v(x)|} (|v(x)|-y) \omega^{-1}(y)\mathrm{d}y\\
    =&\,  2\int_0^{|v(x)|}\int_0^y \omega^{-1}(z)\mathrm{d}z\mathrm{d}y,
\end{align*}
where we integrated by parts twice. This last expression is exactly $F(|v(x)|)$, and so letting this replace the right-hand side of \eqref{eq: quarterWayAtProvingBoundOnSupFromL2Norm} followed by taking the supremum with respect to $x\in\R$ yields \eqref{eq: theBoundOnSupFromL2Norm}. For \eqref{eq: theBoundOnDoubleSupFromL2Norm}, we write $v_+$ and $v_-$ for the positive and negative part of $v$ respectively, and observe that $v\in L^2\cap L^\infty(\R)$ implies $|v|_\infty=\frac{1}{2}(\norm{v_+}{L^\infty(\R)} + \norm{v_-}{L^\infty(\R)})$
and $\norm{v}{L^2(\R)}^2 = \norm{v_+}{L^2(\R)}^2+\norm{v_-}{L^2(\R)}^2$. Furthermore, as both $v_+$ and $-v_-$ admit $\omega$ as a modulus of growth, we can use \eqref{eq: theBoundOnSupFromL2Norm} followed by Jensen's inequality to calculate
\begin{align*}
   \tfrac{1}{2}\norm{v}{L^2(\R)}^2=&\, \tfrac{1}{2}\Big[\norm{v_+}{L^2(\R)}^2 + \norm{v_-}{L^2(\R)}^2\Big]\\
   \geq &\,\tfrac{1}{2}\Big[F\Big(\norm{v_+}{L^\infty(\R)}\Big) + F\Big(\norm{v_-}{L^\infty(\R)}\Big)\Big]\\
   \geq&\,  F\Big(\tfrac{1}{2}\Big[\norm{v_+}{L^\infty(\R)} + \norm{v_-}{L^\infty(\R)}\Big]\Big)\\
   =&\,F\Big(|v|_{\infty}\Big).
\end{align*}
\end{proof}

 The calculations of the next subsection, where Theorem \ref{thm: oneSidedHolderRegularity} is proved, can be boiled down to the three lemmas of this subsection (Lemma \ref{lem: theBoundOnSupFromL2Norm} being the first). The remaining Lemma \ref{lem: generalEvolutionOfOleinikEstimateUnderBurgersSolutionMap} and Lemma \ref{lem: generalEvolutionOfOleinikEstimateUnderConvolutionSolutionMap}, induce a natural evolution of a modulus of growth from the mappings $S^B_t$ and $S^K_t$, introduced in \eqref{eq: definitionOfSolutionMapOfBurgersEquation} and \eqref{eq: definitionOfSolutionMapOfConvolutionEquation}. The relevance of these results should come as no surprise; the previous section showed that entropy solutions could be approximated by repeated compositions of said mappings.

\begin{lemma}\label{lem: generalEvolutionOfOleinikEstimateUnderBurgersSolutionMap}
Suppose $v\in BV(\R)$ admits a concave modulus of growth $\omega$. Then for any $\e>0$, the function $w=S^B_\e( v)$, admits the modulus of growth
\begin{align}\label{eq: generalEvolutionOfOleinikEstimateUnderBurgersSolutionMap}
    h\mapsto \frac{\omega(h)}{1+\e\omega'(h)}.
\end{align}
\end{lemma}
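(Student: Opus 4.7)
The plan is to exploit the characteristic representation of $w = S^B_\e(v)$ provided by the Hopf--Lax formula, and then use the concavity of $\omega$ to obtain the stated expression. Fixing a locally Lipschitz primitive $V$ of $v$, the Hopf--Lax formula for the Hamilton--Jacobi equation governing a primitive of $w$ asserts that the entropy solution $w$ of Burgers' equation at time $\e$ satisfies, for a.e.\ $x \in \R$,
\begin{align*}
    w(x) = \frac{x - y(x)}{\e},\qquad v(y(x)-)\leq w(x) \leq v(y(x)+),
\end{align*}
where $y(x)$ is a minimizer in $z$ of $V(z) + (x-z)^2/(2\e)$. This minimizer is non-decreasing in $x$ (by a standard cyclic-monotonicity/rearrangement argument), and the one-sided sandwich bound is the subdifferential optimality condition at $z = y(x)$.

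Given this representation, fix $x_1 < x_2$ at which $y$ is well-defined and set $h = x_2 - x_1$, $k = y(x_2) - y(x_1) \geq 0$, and $j = w(x_2) - w(x_1)$. The displayed identity yields the affine relation $h = k + \e j$. Combining the sandwich bound with the modulus of growth of $v$, then taking one-sided limits and using continuity of $\omega$ (together with the jump bound $v(y+) - v(y-) \leq \omega(0+)$ that handles the limiting case $k=0$), one obtains
\begin{align*}
    j \;\leq\; v(y(x_2)+) - v(y(x_1)-) \;\leq\; \omega(k),
\end{align*}
where we set $\omega(0) := \omega(0+)$.

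Concavity of $\omega$ now supplies the tangent-line bound $\omega(k) = \omega(h - \e j) \leq \omega(h) - \e j\, \omega'(h)$. Plugging this into $j \leq \omega(k)$ gives $j\bigl(1 + \e\omega'(h)\bigr) \leq \omega(h)$, and since $1 + \e\omega'(h) > 0$ we can divide through to deduce
\begin{align*}
    j \;\leq\; \frac{\omega(h)}{1 + \e\omega'(h)}.
\end{align*}
Taking the essential supremum over all $x_1 < x_2$ with $x_2 - x_1 = h$ yields the claimed modulus of growth for $w$; smoothness and strict monotonicity in $h$ are inherited from $\omega$.

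\textbf{Expected obstacle.} The main work is justifying the Hopf--Lax representation together with the one-sided sandwich bound $v(y(x)-) \leq w(x) \leq v(y(x)+)$ for BV data; these are classical but require a careful appeal to either the Hopf--Lax formalism or Dafermos' theory of generalized characteristics, and special attention is needed at shocks and rarefactions (where the $k=0$ case forces $w$ to be essentially Lipschitz with constant $1/\e$ on a rarefaction fan, which is precisely bounded by $\omega(0+)$). A more elementary route avoiding Hopf--Lax is to approximate $v$ by piecewise-constant data (which still admits $\omega$ as modulus of growth by concavity/subadditivity), verify the estimate directly via front tracking by checking it separately on shock fronts and rarefaction fronts, and pass to the limit using the $L^1$-contraction of $S^B_\e$ on $BV(\R)$; once the characteristic structure is in hand, the calculation above is a short consequence of concavity.
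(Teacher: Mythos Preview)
Your proposal is correct and follows essentially the same route as the paper: the Hopf--Lax minimizer $y(x)$ you introduce is precisely the foot of the minimal backward characteristic, your relation $h=k+\e j$ is the paper's $\xi_2(0)-\xi_1(0)=h-\e[w(x+h)-w(x)]$, and the concavity step is identical. The paper phrases the sandwich bound $v(y(x)-)\leq w(x)\leq v(y(x)+)$ via Dafermos' theory of generalized characteristics (Theorem~11.1.3 in \cite{MR2574377}) rather than Hopf--Lax, and it uses the classical Ole\v{\i}nik estimate in place of your monotonicity-of-minimizers argument to ensure $k\geq 0$, but these are equivalent ingredients for Burgers' equation.
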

\begin{proof}
As $S^B_t$ maps $BV$ to itself, both $v$ and $w$ admits essential left and right limits at point. Thus, we assume without loss of generality that they are left continuous. For $x\in\R$, $h> 0$ and $t\in[0,\e]$, introduce the two (minimal) backward characteristics of $S^B_t(v)$ emanating from $(\e,x)$ and $(\e,x+h)$ respectively
\begin{align*}
    \xi_1(t)&\,= x + (t-\e)w(x),\\
     \xi_2(t)&\,= x+h + (t-\e)w(x+h).
\end{align*}
As $v$ and $w$ are left continuous, it follows from  Theorem 11.1.3. in \cite{MR2574377} that
\begin{align*}
    v(\xi_1(0))\leq &\, w(x), &
    w(x+h)\leq &\, v(\xi_2(0)+).
\end{align*}
Moreover, by the Ole\v{ı}nik estimate of $w$  \eqref{eq: classicalOleinikEstimateForBurgersSolutions}, we find
\begin{align*}
    \xi_2(0)-\xi_1(0)=&\,h-\e[w(x+h)-w(x)]\geq 0,
\end{align*}
and so exploiting $\omega$ we can calculate
\begin{equation}\label{eq: tracingTheCharacteristicsBackwards}
    \begin{split}
        w(x+h)-w(x)
    &\,\leq v( \xi_2(0)+)-v( \xi_1(0))\\
    &\,\leq \omega(h-\e[w(x+h)-w(x)])\\
    &\,\leq \omega(h) -\e\omega'(h)(w(x+h)-w(x)),
    \end{split}
\end{equation}
where the last inequality holds as $\omega$ is concave. We conclude that
\begin{align*}
    w(x+h)-w(x)\leq \frac{\omega(h)}{1+\e\omega'(h)},
\end{align*}
for all $x\in\R$ and $h> 0$. That \eqref{eq: generalEvolutionOfOleinikEstimateUnderBurgersSolutionMap} is positive, smooth and strictly increasing follows from $\omega$ being positive, smooth, strictly increasing and concave.
\end{proof}

We follow immediately with a similar result for the operator $S^K_t$, which will depend on the fractional variation $|K|_{TV^s}$ as defined in \eqref{eq: definitionOfFractionalVariance} and the seminorm  $|\cdot|_{\infty}$ defined in \eqref{eq: definitionOfSeminormHeight}.

\begin{lemma}\label{lem: generalEvolutionOfOleinikEstimateUnderConvolutionSolutionMap}
Let $s\in[0,1]$ and assume $\snorm{K}{TV^s}<\infty$. Suppose $v\in L^\infty(\R)$ admits a modulus of growth $\omega$. Then for any $\e>0$, the function $w=S^K_\e (v)$ admits the modulus of growth
\begin{align}\label{eq: generalEvolutionOfOleinikEstimateUnderConvolutionSolutionMap}
    h\mapsto  \omega(h) + \e\snorm{K}{TV^s}|v|_{\infty}h^s.
\end{align}
\end{lemma}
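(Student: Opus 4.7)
The plan is to decompose $w(x+h)-w(x)$ into the contribution from $v$ and the contribution from $\e K\ast v$, bound each separately, and then verify the technical smoothness/monotonicity required of a modulus of growth.

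First I would write, using the definition $w = v + \e K\ast v$,
\begin{align*}
    w(x+h)-w(x) = [v(x+h)-v(x)] + \e\bigl[(K\ast v)(x+h) - (K\ast v)(x)\bigr].
\end{align*}
The first bracket is at most $\omega(h)$ directly from $\omega$ being a modulus of growth for $v$. For the second bracket I would change variables $z=x-y$ to rewrite it as
\begin{align*}
    (K\ast v)(x+h)-(K\ast v)(x) = \int_\R [K(z+h)-K(z)]\,v(x-z)\,\dd z.
\end{align*}

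The main step, and what I expect to be the crux of the argument, is to exploit the cancellation $\int_\R [K(z+h)-K(z)]\,\dd z = 0$, valid since $K\in L^1(\R)$. This lets me subtract any constant $c\in\R$ from $v(x-z)$ inside the integrand without changing its value. The natural choice is $c = \tfrac{1}{2}(\esssup v + \essinf v)$, because then the definition \eqref{eq: definitionOfSeminormHeight} of $|v|_\infty$ gives $|v(y)-c|\leq |v|_\infty$ for a.e. $y\in\R$. With this choice,
\begin{align*}
     \left|(K\ast v)(x+h)-(K\ast v)(x)\right|
     \leq |v|_\infty\int_\R |K(z+h)-K(z)|\,\dd z
     \leq |v|_\infty\,|K|_{TV^s}\,h^s,
\end{align*}
where the final inequality is simply the definition \eqref{eq: definitionOfFractionalVariance} of $|K|_{TV^s}$. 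Combining this with the bound on the first bracket yields
\begin{align*}
    w(x+h)-w(x) \leq \omega(h) + \e\,|K|_{TV^s}\,|v|_\infty\,h^s
\end{align*}
for a.e.\ $x\in\R$ and all $h>0$, which is the claimed pointwise estimate.

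Finally I would check that the candidate function in \eqref{eq: generalEvolutionOfOleinikEstimateUnderConvolutionSolutionMap} qualifies as a modulus of growth in the sense of Def.\ \ref{def: modulusOfGrowth}: it is positive (the added term is non-negative and $\omega>0$), smooth on $(0,\infty)$ (the term $h^s$ is smooth there even when $s=0$, where it reduces to a constant $2\e\norm{K}{L^1(\R)}|v|_\infty$), and strictly increasing, since $\omega$ is already strictly increasing and the added term is non-decreasing in $h$. This verifies all the technical requirements and completes the proof.
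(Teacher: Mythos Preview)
Your proof is correct and follows essentially the same approach as the paper: decompose $w(x+h)-w(x)$ into the $v$-part and the $\e K\ast v$-part, then exploit the cancellation $\int_\R[K(\cdot+h)-K]=0$ to recenter $v$ at its midpoint so that $\|v-c\|_{L^\infty}=|v|_\infty$, and finish with the definition of $|K|_{TV^s}$. Your explicit verification that the resulting expression satisfies the smoothness and monotonicity requirements of Def.~\ref{def: modulusOfGrowth} is a detail the paper omits.
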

\begin{proof}
For simple notation, we introduce the shift operator $T_h\colon f\mapsto f(\cdot + h)$. As shifts commute with convolution, and since $\int_{\R}T_h K - K\mathrm{d}x = 0$, we start by noting that for any $k\in\R$
\begin{align*}
    (T_h-1) (K\ast v)&\,=[(T_h-1)K] \ast (v-k).
\end{align*}
Next, we introduce $\overline v=\esssup_x v(x)$ and $\underline v=\essinf_x v(x)$, and we observe that
\begin{align*}
    \norm{v-k}{L^\infty(\R)}=\max\{\overline v -k,k-\underline v\}.
\end{align*}
Thus, we minimize by setting $k=\frac{1}{2}(\overline v + \underline v)$ and get $ \norm{v-k}{L^\infty(\R)}= \frac{1}{2}(\overline{v}-\underline{v}) = |v|_{\infty}$.
By Young's convolution inequality and the above calculations we infer
\begin{align*}
        \norm{(T_h-1) (K\ast v)}{L^\infty(\R)}&\,\leq\norm{K(\cdot + h)-K}{L^1(\R)}\norm{v-k}{L^\infty(\R)}\\
    &\,\leq \snorm{K}{TV^s}|v|_{\infty} h^s.
\end{align*}
For any $h> 0$ we then conclude
\begin{align*}
    (T_h-1)w =&\, (T_h-1)v + \e(T_h-1)(K\ast v)
    \leq
    \omega(h) +  \e\snorm{K}{TV^s}|v|_\infty h^s,
\end{align*}
where the last inequality holds pointwise a.e. in $\R$.
\end{proof}

\subsection{Deriving a modulus of growth for entropy solutions.}\label{sec: derivingAModulusOfGrowth}
Throughout this subsection we consider $s\in[0,1]$ fixed and assume that $|K|_{TV^s}$ is finite. Further, we let $\mu,\k_s\in (0,\infty)$ denote arbitrary fixed values, though we impose the requirement $\k_s\geq |K|_{TV^s}$. The role of $\mu$ and $\k_s$ will essentially be that of placeholders for the $L^2$ norm of the initial data and of $|K|_{TV^s}$ respectively, but note that $\mu$ and $\k_s$ are strictly positive (even if the quantities they represent might be zero). This positivity is for technical convenience as some of the coming expressions would otherwise need a limit sense interpretation.

We shall for an arbitrary entropy solution $u$ of \eqref{eq: theMainEquation} with $L^2\cap L^\infty$ data, seek an expression $a(t)$ such that $h\mapsto a(t)h^{\frac{1+s}{2}}$ serves as a modulus of growth (Def. \ref{def: modulusOfGrowth}) for $x\mapsto u(t,x)$.  We begin with an important result, which among other things rephrases Lemma \ref{lem: theBoundOnSupFromL2Norm} for the more explicit case $\omega(h)=ah^{\frac{1+s}{2}}$. For this purpose, we introduce the constant
\begin{align}\label{eq: definitionOfCs}
 c_s=&\,\Bigg[\frac{(2+s)(3+s)}{2(1+s)^2}\Bigg]^{\frac{1+s}{4+2s}},
\end{align}
and the function
\begin{align}\label{eq: definitionOfHOfA}
    H(a)=&\,\frac{(2c_s)^{\frac{2}{1+s}}\mu^{\frac{2}{2+s}}}{a^{\frac{2}{2+s}}},
\end{align}
defined for all $a>0$. We also recall definition \eqref{eq: definitionOfSeminormHeight} of the seminorm $|\cdot|_{\infty}$. The essential part of the next lemma is in allowing us to extend the domain for which a homogeneous modulus of growth is valid. This will be vital when proving the following proposition.

\begin{lemma}\label{lem: explicitBoundOnSupFromExplicitOmegaFunction}
With fixed $a>0$, define $\omega(h)=ah^{\frac{1+s}{2}}$. Suppose $v\in L^2(\R)$ satisfies $\norm{v}{L^2(\R)}\leq \mu$ and admits $\omega$ as a modulus of growth for $h\in (0,H(a))$.
Then $v$ admits $\omega$ as a modulus of growth for all $h\in(0,\infty)$ and moreover
\begin{align}\label{eq: explicitBoundOnSupFromExplicitOmegaFunction}
    \norm{v}{L^\infty(\R)}\leq&\, 2^{\frac{1+s}{4+2s}}c_s \mu^{\frac{1+s}{2+s}}a^{\frac{1}{2+s}},\\\label{eq: explicitBoundOnDoubleSupFromExplicitOmegaFunction}
    |v|_\infty\leq &\, c_s \mu^{\frac{1+s}{2+s}}a^{\frac{1}{2+s}}.
\end{align}
\end{lemma}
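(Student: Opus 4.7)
The plan is to adapt the argument of Lemma \ref{lem: theBoundOnSupFromL2Norm} while carefully accounting for the restricted range $(0,H(a))$ on which $\omega$ serves as a modulus of growth. After replacing $v$ by its left-continuous representative (via Lemma \ref{lem: modulusOfGrowthImpliesLeftAndRightLimits}), I proceed in three stages: first establish a preliminary pointwise bound $|v(x)| \leq \omega(H(a))$ via a contradiction argument tuned to the specific constants $c_s$ and $H(a)$; then upgrade this to the sharp bounds \eqref{eq: explicitBoundOnSupFromExplicitOmegaFunction} and \eqref{eq: explicitBoundOnDoubleSupFromExplicitOmegaFunction} using the explicit form of $F$ from Lemma \ref{lem: theBoundOnSupFromL2Norm}; and finally deduce from \eqref{eq: explicitBoundOnDoubleSupFromExplicitOmegaFunction} that $\omega$ extends to a modulus of growth on all of $(0,\infty)$.

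For the first stage, suppose for contradiction that $v(x_0) > \omega(H(a))$ at some point $x_0$. The restricted modulus of growth yields $v(x_0-h) \geq v(x_0) - \omega(h) > 0$ for all $h \in (0, H(a))$; squaring and integrating against $\|v\|_{L^2(\R)}^2 \leq \mu^2$, then using $\omega(h) = a h^{(1+s)/2}$ and the change of variable $h = H(a) r$, gives
\[
H(a)\,\omega(H(a))^2 \Bigl[\lambda^2 - \tfrac{4\lambda}{3+s} + \tfrac{1}{2+s}\Bigr] \leq \mu^2,
\]
where $\lambda \coloneqq v(x_0)/\omega(H(a)) > 1$. A direct computation from \eqref{eq: definitionOfCs} and \eqref{eq: definitionOfHOfA} shows that $\mu^2/(H(a)\,\omega(H(a))^2) = 2^{-(3+s)/(1+s)}(1+s)^2/((2+s)(3+s))$, which is strictly below the value $(1+s)^2/((2+s)(3+s))$ that the bracketed quadratic takes at $\lambda = 1$; since its vertex lies at $2/(3+s) < 1$, no $\lambda > 1$ can satisfy the inequality, a contradiction. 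The same reasoning applied to $-v$ gives $\|v\|_{L^\infty(\R)} \leq \omega(H(a))$, and this closed-form verification — which relies on the precisely calibrated margin built into \eqref{eq: definitionOfCs} — is the main technical obstacle.

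For the second and third stages, decompose $v = v_+ - v_-$ into its positive and negative parts; both $v_+$ and $-v_-$ inherit the restricted modulus of growth and are now bounded by $\omega(H(a))$, so $\omega^{-1}(v_\pm(x)) \leq H(a)$ and the argument of Lemma \ref{lem: theBoundOnSupFromL2Norm} applies verbatim to each, yielding $F(v_\pm(x)) \leq \|v_\pm\|_{L^2(\R)}^2$ pointwise, with $F(y) = \tfrac{(1+s)^2}{(2+s)(3+s)} a^{-2/(1+s)} y^{2(2+s)/(1+s)}$. Taking essential suprema and inverting $F$ produces \eqref{eq: explicitBoundOnSupFromExplicitOmegaFunction} via the identity $2^{(1+s)/(4+2s)} c_s = [(2+s)(3+s)/(1+s)^2]^{(1+s)/(2(2+s))}$, while Jensen's inequality applied to the convex $F$ — using $|v|_\infty = \tfrac{1}{2}(\|v_+\|_{L^\infty(\R)} + \|v_-\|_{L^\infty(\R)})$ together with $\|v_+\|_{L^2(\R)}^2 + \|v_-\|_{L^2(\R)}^2 \leq \mu^2$ — gives $F(|v|_\infty) \leq \mu^2/2$, which inverts precisely to \eqref{eq: explicitBoundOnDoubleSupFromExplicitOmegaFunction}. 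Finally, for any $h \geq H(a)$ and a.e. $x$,
\[
v(x+h) - v(x) \leq 2|v|_\infty \leq 2c_s \mu^{(1+s)/(2+s)} a^{1/(2+s)} = \omega(H(a)) \leq \omega(h),
\]
which extends the modulus of growth to all of $(0, \infty)$ and closes the proof.
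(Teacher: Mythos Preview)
Your proof is correct, and the overall architecture (use the $L^2$ bound to control the height, then feed this into the argument of Lemma~\ref{lem: theBoundOnSupFromL2Norm}, then extend the modulus to $h\ge H(a)$ via $2|v|_\infty=\omega(H(a))$) matches the paper. The third stage is identical to the paper's.

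Where you differ is in how you handle the restricted range $(0,H(a))$. The paper never proves a preliminary bound $|v|\le\omega(H(a))$ directly; instead it reruns the proof of Lemma~\ref{lem: theBoundOnSupFromL2Norm} with the truncated inverse $y\mapsto\min\{a^{-2/(1+s)}y^{2/(1+s)},H(a)\}$, obtaining bounds in terms of a modified $\tilde F$, and then observes that $\tilde F$ and $F$ agree on $(0,\omega(H(a)))$ while $F^{-1}(\mu^2)<\omega(H(a))$ by the numerical margin $2^{(1+s)/(4+2s)}<2$. You instead establish $|v|\le\omega(H(a))$ up front by a contradiction argument that reduces to the explicit quadratic inequality in $\lambda$, and only then invoke Lemma~\ref{lem: theBoundOnSupFromL2Norm} verbatim (which is now legitimate because $\omega^{-1}(|v(x)|)\le H(a)$). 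Your route is more concrete and self-contained---the key numerical check $2^{-(3+s)/(1+s)}<1$ plays the same role as the paper's $2^{(1+s)/(4+2s)}<2$, and both exploit the deliberate factor of $2$ built into the definition of $H(a)$. The paper's truncation device is slightly more abstract but avoids the separate polynomial analysis. Either way works; your decomposition into $v_\pm$ in the second stage is also a minor but valid variation on the paper's direct use of \eqref{eq: theBoundOnSupFromL2Norm}--\eqref{eq: theBoundOnDoubleSupFromL2Norm}.
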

\begin{proof}
We begin by proving the two inequalities, so let us assume for now that $v$ admits $\omega$ as a modulus of growth for all $h\in(0,\infty)$. Since $\omega^{-1}(y)=a^{-\frac{2}{1+s}}y^{\frac{2}{1+s}}$ the function $F$ from \eqref{eq: definitionOfFOfInverseOmega} can here be written
\begin{align*}
    F(y)=\Bigg[\frac{2(1+s)^2}{(3+s)(4+2s)}\Bigg]\frac{y^{\frac{4+2s}{1+s}}}{a^{\frac{2}{1+s}}} = \frac{1}{2}\Bigg(\frac{y}{c_s a^{\frac{1}{2+s}}}\Bigg)^{\frac{4+2s}{1+s}},
    \end{align*}
with inverse
\begin{align*}
    F^{-1}(y)=2^{\frac{1+s}{4+2s}}c_sa^{\frac{1}{2+s}}y^{\frac{1+s}{4+2s}}.
    \end{align*}
Combined with $\norm{v}{L^2(\R)}\leq \mu$, \eqref{eq: theBoundOnSupFromL2Norm} and \eqref{eq: theBoundOnDoubleSupFromL2Norm} give $\norm{v}{L^\infty(\R)}\leq F^{-1}(\mu^2)$ and $|v|_{\infty}\leq F^{-1}(\frac{1}{2}\mu^2)$, which coincides with \eqref{eq: explicitBoundOnSupFromExplicitOmegaFunction} and \eqref{eq: explicitBoundOnDoubleSupFromExplicitOmegaFunction} respectively. Next, assume we only know that $v$ admits $\omega$ as a modulus of growth for $h\in(0,H(a))$. The steps in the proof of Lemma \ref{lem: theBoundOnSupFromL2Norm} can still be carried out if one lets the role of $\omega^{-1}(y)=a^{-\frac{2}{1+s}}y^{\frac{2}{1+s}}$ be taken by the truncated version
\begin{align*}
    y\mapsto \min\Big\{a^{-\frac{2}{1+s}}y^{\frac{2}{1+s}},H(a)\Big\},
\end{align*}
to yield the inequalities $\norm{v}{L^\infty(\R)}\leq \tilde{F}^{-1}(\mu^2)$ and $|v|_{\infty}\leq \tilde{F}^{-1}(\frac{1}{2}\mu^2)$ with
\begin{align*}
     \tilde{F}(y)\coloneqq 2\int_0^{y}\int_0^{y_1} \min\Big\{a^{-\frac{2}{1+s}}y_2^{\frac{2}{1+s}},H(a)\Big\}\mathrm{d}y_2\mathrm{d}y_1.
\end{align*}
As $\tilde{F}$ is strictly increasing and agrees with $F$ on $(0,aH(a)^{\frac{1+s}{2}})$, we necessarily have both $\tilde{F}^{-1}(\mu^2)=F^{-1}(\mu^2)$ and $\tilde{F}^{-1}(\frac{1}{2}\mu^2)=F^{-1}(\frac{1}{2}\mu^2)$ provided $F^{-1}(\mu^2)<aH(a)^{\frac{1+s}{2}}$. As $F^{-1}(\mu^2)$ is exactly the right-hand side of \eqref{eq: explicitBoundOnSupFromExplicitOmegaFunction}, we see that the latter inequality holds since
\begin{align*}
    F^{-1}(\mu^2)= 2^{\frac{1+s}{4+2s}}c_s\mu^{\frac{1+s}{2+s}}a^{\frac{1}{2+s}} < 2c_s\mu^{\frac{1+s}{2+s}}a^{\frac{1}{2+s}}=aH(a)^{\frac{1+s}{2}}.
\end{align*}
Thus, the bounds for $\norm{v}{L^\infty(\R)}$ and $|v|_{\infty}$ attained now coincides again with \eqref{eq: explicitBoundOnSupFromExplicitOmegaFunction} and \eqref{eq: explicitBoundOnDoubleSupFromExplicitOmegaFunction}. It then follows that $v$ admits $\omega$ as a modulus of growth for all $h\in(0,\infty)$; indeed, for any $h\in[H(a),\infty)$ we have the two trivial inequalities
\begin{align*}
    \esssup_{x\in\R} \Big[v(x+h)-v(x)\Big]\leq&\, 2|v|_\infty, & aH(a)^{\frac{1+s}{2}}\leq ah^{\frac{1+s}{2}},
\end{align*}
and so we would be done if $2|v|_\infty\leq aH(a)^{\frac{1+s}{2}}$, which is precisely the already established inequality \eqref{eq: explicitBoundOnDoubleSupFromExplicitOmegaFunction} multiplied by two.
\end{proof}
The next proposition combines Lemma \ref{lem: generalEvolutionOfOleinikEstimateUnderBurgersSolutionMap} and \ref{lem: generalEvolutionOfOleinikEstimateUnderConvolutionSolutionMap} to attain a corresponding result for the operator $S^B_\e\circ S^K_\e$. While it in Section \ref{sec: wellPosedness} was natural to work with iterations of $S^K_\e\circ S^B_\e$, it will here be easier to work with its counterpart $S^B_\e\circ S^K_\e$. We now introduce the useful limit value $\underline{a}$ defined by
\begin{align}\label{eq: limitValueA}
    \underline{a} =&\,\bigg(\frac{2c_s\kappa_s}{1+s}\bigg)^{\frac{2+s}{3+2s}}\mu^{\frac{1+s}{3+2s}}.
\end{align}
This quantity will naturally occur in our calculations to come; it relates to the sought coefficient $a(t)$ through the relation $\lim_{t\to\infty}a(t)=\underline{a}$.

\begin{proposition}\label{prop: newHolderCoefficientAfterBothSolutionMapsHaveBeenAdded}
For every $A>\underline{a}$, there are constants $C_A, \e_A>0$ such that: if $v\in BV(\R)$ satisfies $\norm{v}{L^2(\R)}\leq \mu$ and admits the modulus of growth $h\mapsto ah^{\frac{1+s}{2}}$ for some $a\in[\underline{a}, A]$, then for every $\e\in(0,\e_A]$ the function $w=S^B_\e\circ S^K_\e(v)$ admits the modulus of growth 
\begin{align}
    h\mapsto \Big(a-\e f(a) + \e^2 C_A\Big)h^{\frac{1+s}{2}},
\end{align}
where $f(a)\geq 0$ is given by
\begin{align}\label{eq: definitionOfFunctionF}
          f(a)= \Bigg[\frac{(1+s) a^{\frac{2-s}{2+s}}}{2^{\frac{2}{1+s}}c_s^{\frac{1-s}{1+s}}\mu^{\frac{1-s}{2+s}}}\Bigg]\bigg[a^{\frac{3+2s}{2+s}}-\underline{a}^{\frac{3+2s}{2+s}}\bigg] .
\end{align}
\end{proposition}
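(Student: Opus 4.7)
The plan is to apply the two evolution lemmas in sequence, compare the resulting modulus of growth with the target form $a_1 h^{(1+s)/2}$ pointwise on the critical interval $(0,H(a_1))$, and then extend to all $h>0$ via Lemma~\ref{lem: explicitBoundOnSupFromExplicitOmegaFunction}. Writing $\alpha \coloneqq (1+s)/2$, Lemma~\ref{lem: explicitBoundOnSupFromExplicitOmegaFunction} applied to $v$ first yields $|v|_{\infty}\leq c_s \mu^{(1+s)/(2+s)} a^{1/(2+s)}$, and Lemma~\ref{lem: generalEvolutionOfOleinikEstimateUnderConvolutionSolutionMap} then supplies
\begin{align*}
    \omega_1(h) \coloneqq a h^{\alpha}+\epsilon B h^s, \qquad B \coloneqq \kappa_s c_s \mu^{(1+s)/(2+s)} a^{1/(2+s)},
\end{align*}
as a modulus of growth for $S^K_\epsilon(v)$. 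Since $\alpha,s\in[0,1]$ make both $h\mapsto h^\alpha$ and $h\mapsto h^s$ concave on $(0,\infty)$, $\omega_1$ itself is concave, smooth, and strictly increasing; Lemma~\ref{lem: generalEvolutionOfOleinikEstimateUnderBurgersSolutionMap} therefore gives $\omega_2(h)\coloneqq\omega_1(h)/(1+\epsilon \omega_1'(h))$ as a modulus of growth for $w$.

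The crux is the pointwise bound $\omega_2(h)\leq a_1 h^\alpha$ on $(0,H(a_1))$, with $a_1\coloneqq a-\epsilon f(a)+\epsilon^2 C_A$. Multiplying through by $1+\epsilon\omega_1'(h)$, invoking the identity $2\alpha-1=s$, and collecting terms, the inequality becomes
\begin{align*}
    f(a)\, h^\alpha + (B - a^2\alpha)\, h^s \leq \epsilon C_A h^\alpha - \epsilon a f(a)\alpha\, h^s + \epsilon a_1 B s\, h^{\alpha+s-1} + \epsilon^2(\text{remainder}).
\end{align*}
At leading order in $\epsilon$, this reduces to $f(a)\leq (a^2\alpha-B)h^{(s-1)/2}$; for $s<1$ the right-hand side decreases in $h$, so the binding case sits at $h=H(a_1)\approx H(a)$, and I would \emph{define} $f(a)$ by enforcing equality there. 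Substituting the explicit formulas \eqref{eq: definitionOfHOfA} and \eqref{eq: definitionOfCs} then reproduces exactly \eqref{eq: definitionOfFunctionF}; in particular $f(a)\geq 0$ follows precisely from $a\geq\underline a$, since the bracket $a^{(3+2s)/(2+s)}-\underline a^{(3+2s)/(2+s)}$ vanishes exactly when $a^2\alpha=B$, which by \eqref{eq: limitValueA} is the defining relation $a=\underline a$.

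The remaining subleading contributions---higher-order Taylor terms from $(1+\epsilon\omega_1')^{-1}$, the gap $H(a_1)-H(a)$, and the cross-term $\epsilon a_1 Bs h^{\alpha+s-1}$---are $O(\epsilon^2)$ uniformly over the compact range $a\in[\underline a,A]$ and $h\in(0,H(a_1))$, and are absorbed by choosing $C_A$ sufficiently large and $\epsilon_A$ sufficiently small. Once the pointwise bound is established on $(0,H(a_1))$, Lemma~\ref{lem: explicitBoundOnSupFromExplicitOmegaFunction} extends it to all $h>0$; the minor loss $\|w\|_{L^2}^2 \leq (1+\epsilon^2\kappa^2)\mu^2$, obtained by composing Lemma~\ref{lem: theApproximateSolutionMapConservesTheL2Norm}'s estimate on $S^K_\epsilon$ with the $L^2$-nonexpansivity of $S^B_\epsilon$, perturbs the effective $H(a_1)$ by another $O(\epsilon^2)$ which is likewise absorbed into $C_A$.

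The main obstacle is the bookkeeping of the remainders: propagating the two-term structure of $\omega_1$ through the nonlinear map $\omega\mapsto\omega/(1+\epsilon\omega')$, organizing every error as a clean power of $\epsilon$ and $h$, and ensuring uniform $O(\epsilon^2)$ control despite the divergence of $\omega_1'(h)$ as $h\to 0^+$ (when $s<1$), which prevents a naive global Taylor expansion; one handles the very small $h$ regime by the separate observation that $\omega_2(h)/h^\alpha$ is small there, so the target inequality holds trivially, leaving the careful analysis for the mid-range $h$ where the expansion is valid.
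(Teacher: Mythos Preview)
Your outline follows the paper's strategy: apply the two evolution lemmas in sequence, compare the resulting modulus with $a_1 h^{(1+s)/2}$ on a bounded $h$-interval, and extend via Lemma~\ref{lem: explicitBoundOnSupFromExplicitOmegaFunction}. The paper's execution is cleaner because of one simplification you miss: instead of carrying the full $\omega_1'(h)=\alpha a h^{\alpha-1}+\e Bs h^{s-1}$ through the denominator from Lemma~\ref{lem: generalEvolutionOfOleinikEstimateUnderBurgersSolutionMap}, it uses only the lower bound $\omega_1'(h)>\tfrac{1+s}{2}\,a\,h^{(s-1)/2}$. This yields the factored form
\[
\omega(h)=\bigl[a-\e\,B(a,h,\e)\bigr]h^{(1+s)/2},
\]
with $B(a,h,\e)$ explicitly monotone non-increasing in $h$ (since $a\ge\underline a$). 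The monotonicity immediately gives the pointwise bound on $(0,\bar h)$ by evaluating $B$ at $\bar h$, replacing your order-by-order remainder analysis with a single clean step; one then chooses $\bar h=H(a)+\e D_A$ and uses smoothness of $B$ on a compact set.

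Two small inaccuracies in your sketch are worth flagging. First, the gap $H(a_1)-H(a)$ is $O(\e)$, not $O(\e^2)$; its \emph{contribution} to the target inequality is $O(\e^2)$ only after accounting for the overall $\e$-prefactor. Second, your small-$h$ claim that $\omega_2(h)/h^\alpha\to 0$ fails at $s=0$, where the limit is $2B/a=\underline a^{3/2}a^{-1/2}>0$; the target inequality still holds there (since $a\ge\underline a$), but not for the reason you give. In fact, after multiplying through by $1+\e\omega_1'$ as you do, no Taylor expansion is needed and no small-$h$ divergence arises---the inequality is polynomial in $h$-powers and the leading term $(B-\alpha a^2)h^s$ dominates near $h=0$, so your separate small-$h$ treatment is unnecessary. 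The paper handles the $L^2$-excess by rescaling $w\mapsto\rho^{-1}w$ rather than perturbing $\mu$ inside $H$; the two are equivalent, but rescaling avoids reopening Lemma~\ref{lem: explicitBoundOnSupFromExplicitOmegaFunction} with a modified $\mu$.
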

\begin{proof}
 For fixed $A>\underline{a}$, let $v\in BV(\R)$ and $a\in[\underline{a},A]$ be as described in the lemma. We fix the pair $v$ and $a$ for convenience, but it should be clear from the proof that the construction of $C_A$ and $\e_A$ do not in fact depend on said pair. Introduce for $\e>0$ the auxiliary function $\tilde{v}= S^K_\e(v)$. Combining Lemma \ref{lem: generalEvolutionOfOleinikEstimateUnderConvolutionSolutionMap} and \eqref{eq: explicitBoundOnDoubleSupFromExplicitOmegaFunction}, $\tilde{v}$ admits the concave modulus of growth
\begin{align*}
    \tilde\omega(h)=ah^{\frac{1+s}{2}} + \e c_s\kappa_sa^{\frac{1}{2+s}}\mu^{\frac{1+s}{2+s}}h^s,
\end{align*}
where $|K|_{TV^s}$ was replaced by the larger $\k_s$ introduced at the beginning of this subsection.
And since $\tilde{v}\in BV(\R)$, as follows from \eqref{eq: propertyOneOfBurgerMap} and \eqref{eq: propertyThreeOfBurgerMap}, we can further apply Lemma \ref{lem: generalEvolutionOfOleinikEstimateUnderBurgersSolutionMap} to $w= S^B_\e(\tilde{v})$, which combined with $ \tilde\omega'(h)> (\tfrac{1+s}{2})ah^{\frac{s-1}{2}}$, allows us to conclude that $w$ admits the modulus of growth
\begin{equation}\label{eq: aRepresentationOfTildeOmegaV}
    \begin{split}
            \omega(h)=&\, \frac{a h^{\frac{1+s}{2}} + \e c_s\kappa_s a^{\frac{1}{2+s}}\mu^{\frac{1+s}{2+s}}h^s}{1+\e (\tfrac{1+s}{2})a h^{\frac{s-1}{2}}}\\
    =&\, ah^{\frac{1+s}{2}} + \frac{-\e (\frac{1+s}{2}) a^2 h^{s} + \e c_s\kappa_s a^{\frac{1}{2+s}}\mu^{\frac{1+s}{2+s}}h^s}{1+\e ({\tfrac{1+s}{2}})a h^{\frac{s-1}{2}}}\\
    =&\, ah^{\frac{1+s}{2}} - \e \underbrace{\Bigg[\frac{ (1+s)a^2-2c_s\kappa_sa^{\frac{1}{2+s}}\mu^{\frac{1+s}{2+s}}}{2h^{\frac{1-s}{2}}+\e (1+s)a}\Bigg]}_{B(a,h,\e)} h^{\frac{1+s}{2}},
    \end{split}
\end{equation}
where $B(a,h,\e)$ denotes the square bracket. With $\underline{a}$ as given by \eqref{eq: limitValueA}, this square bracket can further be factored
\begin{align}\label{eq: definitionOfB}
    B(a,h,\e)=\Bigg[\frac{(1+s)a^{\frac{1}{2+s}}}{2h^{\frac{1-s}{2}}+\e(1+s)a}\Bigg]\bigg[a^{\frac{3+2s}{2+s}}-\underline{a}^{\frac{3+2s}{2+s}}\bigg].
\end{align}
Since $a\geq\underline{a}$ it follows that $B(a,h,\e)$ is non-negative and thus non-increasing in $h>0$. Consequently, we read from \eqref{eq: aRepresentationOfTildeOmegaV} the inequality
\begin{align}\label{eq: modulusOfGrowthBoundedByHomogeneousOneOnBoundedIntervals}
    \omega(h)\leq&\, \Big(a-\e B(a,\overline{h},\e)\Big)h^{\frac{1+s}{2}}, & 0<h<&\, \overline{h}.
\end{align}
Since \eqref{eq: modulusOfGrowthBoundedByHomogeneousOneOnBoundedIntervals} can be viewed as implying that $w$ admits a homogeneous modulus of growth on bounded intervals, we would like to make use of Lemma \ref{lem: explicitBoundOnSupFromExplicitOmegaFunction}; however, we do not necessarily have $\norm{w}{L^2(\R)}\leq \mu$ (as is assumed by said lemma). We deal with this small inconvenience as follows: define $\tilde w$ by
\begin{align}\label{eq: renormalizedV}
    \tilde{w} \coloneqq &\,\rho^{-1} w, & \rho\coloneqq \, \max\Big\{1,\mu^{-1}\norm{w}{L^2(\R)}\Big\},
\end{align}
that is, $\tilde{w}$ is the renormalized version of $w$ if the $L^2$ norm of $w$ exceeds $\mu$. We proceed by proving the proposition for $\tilde w$ and then extend the result to $w$. Observe that $\omega$ must serve as a modulus of growth also for $\tilde w$ since $\rho\geq 1$, and consequently by \eqref{eq: modulusOfGrowthBoundedByHomogeneousOneOnBoundedIntervals}, $\tilde w$ further admits for any fixed $\overline{h}>0$ the modulus of growth
\begin{align}\label{eq: modulusOfGrowthOnFixedIntervals}
    h\mapsto&\,\Big(a-\e B(a,\overline{h},\e)\Big)h^{\frac{1+s}{2}},
\end{align}
for the restricted values $h\in (0,\overline{h})$. Lemma \ref{lem: explicitBoundOnSupFromExplicitOmegaFunction} then tells us that $\tilde w$ must additionally admit \eqref{eq: modulusOfGrowthOnFixedIntervals} as a modulus of growth for all $h>0$ provided 
\begin{align}\label{eq: necessaryInequalityForExtensionOfModulusOfGrowth}
    H\Big(a-\e B(a,\overline{h},\e)\Big)\leq \overline{h},
\end{align}
where the function $H$ is as defined by \eqref{eq: definitionOfHOfA}. We now show that there is an appropriate constant $D_A$ so that $\overline{h}=H(a)+\e D_A$ satisfies \eqref{eq: necessaryInequalityForExtensionOfModulusOfGrowth}. To do so, we start by introducing the closed set of points $(a,h,\e)$ defined by
\begin{align*}
    S_A = [\underline{a},A]\times[H(A),\infty)\times[0,\infty),
\end{align*}
where we abuse notation slightly by reusing $a$ as a dummy variable for referring to elements in $[\underline{a},A]$ (although the original $a\in [\underline{a},A]$ is fixed).
From \eqref{eq: definitionOfB} we see that both $(a,h,\e)\mapsto B(a,h,\e)$ and its partial derivatives are bounded on the set $S_A$. We exploit the additional smoothness of $B$ later; for now we need only $\norm{B}{L^\infty(S_A)}<\infty$. Pick $\e_A>0$ such that 
\begin{align*}
    \e_A\norm{B}{L^\infty(S_A)}\leq \tfrac{1}{2}\underline{a},
\end{align*}
and observe that the argument of $H$ in \eqref{eq: necessaryInequalityForExtensionOfModulusOfGrowth} must then lie in  $[\frac{1}{2}\underline{a},A]$ for all $(a,\overline{h},\e)\in [\underline{a},A]\times [H(a),\infty)\times [0,\e_A]\subset S_A$. Moreover, as $H$ is smooth on $[\frac{1}{2}\underline{a},A]$ we conclude for any such triplet $(a,\overline{h},\e)$ that 
\begin{align*}
     H\Big(a-\e B(a,\overline{h},\e)\Big)\leq H(a) + \e\norm{H'}{L^\infty([\frac{1}{2}\underline{a}, A])}\norm{B}{L^\infty(S_A)}\eqqcolon H(a)+\e D_A.
\end{align*}
Thus, the choice $\overline{h}\coloneqq H(a)+\e D_A$ satisfies \eqref{eq: necessaryInequalityForExtensionOfModulusOfGrowth} for every $a\in[\underline{a},A]$ and $\e\in(0,\e_A]$, and so substituting for $\overline{h}$ in \eqref{eq: modulusOfGrowthOnFixedIntervals}, we conclude that $\tilde w$ admits the modulus of growth
\begin{align}\label{eq: explicitNewModulusOfGrowthForVTilde}
    h\mapsto \Big(a-\e B(a,H(a) + \e D_A,\e)\Big)h^{\frac{1+s}{2}},
\end{align}
for all $h>0$, provided $\e\in(0,\e_A]$ and $a\in[\underline{a},A]$ (the latter already assumed). Recalling that the partial derivatives of $B$ are bounded on $S_A$, we can write
\begin{align}\label{eq: exploitingThatThePartialDerivativesOfBAreBoundedOnSA}
    B(a,H(a)+\e D_A,\e)\geq  B(a, H(a),0) - \e \Big[ D_A\norm{\tfrac{\partial B}{\partial h}}{L^\infty(S_A)} + \norm{\tfrac{\partial B}{\partial \e}}{L^\infty(S_A)}\Big],
\end{align}
and so letting $C_A$ denote a constant no smaller than the square bracket in \eqref{eq: exploitingThatThePartialDerivativesOfBAreBoundedOnSA}, we combine this inequality with \eqref{eq: explicitNewModulusOfGrowthForVTilde} to further conclude that
\begin{align}\label{eq: explicitsecondNewModulusOfGrowthForVTilde}
    h\mapsto \Big(a-\e B(a,H(a),0)+ \e^2 C_A\Big)h^{\frac{1+s}{2}},
\end{align}
also serves as a modulus of growth for $\tilde w$, again with $\e\in(0,\e_A]$ and $a\in[\underline{a},A]$. Using the explicit expressions \eqref{eq: definitionOfB} and \eqref{eq: definitionOfHOfA} one attains the identity $B(a,H(a),0)=f(a)$, where $f$ is defined in \eqref{eq: definitionOfFunctionF}, and so the proposition has been proved for the renormalized function $\tilde w$. It remains to extend the result to $w$; assume from here on out that $\e\in(0,\e_A]$. Introducing $\tilde a=(a-\e f(a) + \e^2 C_A)$ for brevity, it is clear from the relation $w = \rho \tilde w$, where $\rho$ is as defined in \eqref{eq: renormalizedV}, that $w$ admits $h\mapsto \rho \tilde a h^{\frac{1+s}{2}}$ as a modulus of growth, as the same can be said for $\tilde w$ and $h\mapsto \tilde a h^{\frac{1+s}{2}}$. Moreover, by a similar and coarser calculation as in the proof of Lemma \ref{lem: theApproximateSolutionMapConservesTheL2Norm}, we have $\norm{w}{L^2(\R)}\leq (1+\e^2\k^2)\norm{u}{L^2(\R)}$ where $\k=\norm{K}{L^1(\R)}$, and so $\rho\leq 1+\e^2\k^2$. 
Thus
\begin{align*}
    \rho\tilde{a}\leq (1+\e^2\kappa^2)\tilde a = a-\e f(a)+\e^2[C_A + \k^2\tilde a]\leq a-\e f(a) + \e^2\tilde{C}_A,
\end{align*}
where $\tilde{C}_A\coloneqq [C_A+\k^2(A+\e_A^2C_A)]$, and so this calculation shows that the proposition also holds for $w$ after choosing a larger constant $C_A$. 
\end{proof}

Together with a few results from Section \ref{sec: wellPosedness}, the previous proposition equips us with all we need to construct moduli of growth for entropy solutions of \eqref{eq: theMainEquation}. Roughly speaking, we can for small $\e>0$ iterate Proposition \ref{prop: newHolderCoefficientAfterBothSolutionMapsHaveBeenAdded} repeatedly to construct a modulus of growth for an approximate entropy solution \eqref{eq: definitionOfFamilyOfApproximateSolutions}, and further letting $\e\to0$ this construction carries over to the entropy solution itself. To formalize, we shall introduce some notation and assume from here on that a pair of constants $\e_A, C_A$, as described by Proposition \ref{prop: newHolderCoefficientAfterBothSolutionMapsHaveBeenAdded}, has been chosen for each $A>\underline{a}$. Define the function 
\begin{align}\label{eq: simplerNotationOfFatIterationFunction}
   g_{A}^\e(a)\coloneqq a- \e f(a) + \e^2C_A,
\end{align}
which is parameterized over $A>\underline{a}$ and $\e\in (0,\e_A]$ and where
\begin{align}\label{eq: simplerNotationOfIteratingFunction}
    f(a)=&\, \gamma a^{\frac{2-s}{2+s}}\Big(a^{\frac{3+2s}{2+s}}-\underline{a}^{\frac{3+2s}{2+s}}\Big), &
    \gamma=&\, \frac{1+s }{2^{\frac{2}{1+s}}c_s^{\frac{1-s}{1+s}}\mu^{\frac{1-s}{2+s}}}.
\end{align}
The function $f$ in \eqref{eq: simplerNotationOfIteratingFunction} is indeed the same as in \eqref{eq: definitionOfFunctionF}, and so $g_A^\e(a)$ is the `new Hölder coefficient' that Proposition \ref{prop: newHolderCoefficientAfterBothSolutionMapsHaveBeenAdded} provides. In the coming proposition, we carry out the above sketched argument consisting in part of repeated iterations of Proposition \ref{prop: newHolderCoefficientAfterBothSolutionMapsHaveBeenAdded}, and consequently, we will encounter repeated compositions of $g_A^\e$. We point out two relevant facts about $g_A^\e$. First off, for any $A>\underline{a}$ and sufficiently small $\e>0$, the function $g_A^\e$ maps $[\underline{a},A]$ to itself. To see this, note from \eqref{eq: simplerNotationOfFatIterationFunction} that $(g_A^\e)'$ is strictly positive on $[\underline{a},A]$ for small $\e>0$. Moreover, we have
\begin{align*}
    g_A^\e(\underline{a})=&\,\underline{a}, &  g_A^\e(A)=&\,A - \e f(A) + \e^2 C_A, 
\end{align*}
and since $f(A)>0$, it is clear that $\e>0$ can be made sufficiently small such that
\begin{align}\label{eq: gMapsIntervallToItself}
    \underline{a}=g_A^\e(\underline{a})\leq g_A^\e(a)\leq g_A^\e(A)\leq A,
\end{align}
for all $a\in [\underline{a},A]$. Our second fact, rigorously justified in the coming proposition, is that  repeated compositions of $g_A^\e$ applied to the starting value $a=A$ will, as $\e\to0$, result in a smooth function $a_A\colon[0,\infty)\to(\underline{a},A]$, implicitly defined by
\begin{align}\label{eq: implicitDefintionOfa_A(t)}
    t=\int_{a_A(t)}^A \frac{\mathrm{d}a}{f(a)}.
\end{align}
That \eqref{eq: implicitDefintionOfa_A(t)} yields a unique value $a_A(t)\in (\underline{a},A]$ for each $t\in[0,\infty)$ follows as the positive integrand has a non-integrable singularity at $a=\underline{a}$.
Alternatively, the function $a_A$ can be viewed as the solution of the differential equation
\begin{align}\label{eq: preciseReducedEquationWithoutH}
    \begin{cases}
           a'(t)= - f(a(t)),\quad t>0,\\
           a(0)=A.
    \end{cases}
\end{align}
For the next proposition, we shall exploit the two constants
\begin{align}\label{eq: twoUsefulConstants}
    M_{A} =&\, \max_{a\in[\underline{a},A]}|f'(a)|, & \tilde{M}_{A} =&\, \max_{a\in[\underline{a},A]}|f(a)f'(a)|,
\end{align}
both well defined as $f$ is smooth on $\R^+$. Note that the latter serves as a bound on $(a_A)''=f(a_A)f'(a_A)$, and so by Taylor expansion, we infer
\begin{align}\label{eq: approximatingTheSmootha_AWithDiscreteSteps}
    |a_A(t+\e) - a_A(t) + \e f(a_A(t))|\leq &\, \frac{\e^2}{2}\tilde{M}_A,
\end{align}
for all $t\geq 0$ and $\e\geq0$.

\begin{proposition}\label{prop: preciseModulusOfGrowthForEntropySolution}
Let $u$ be an entropy solution of \eqref{eq: theMainEquation}, whose initial data $u_0\in BV(\R)$ satisfies $\norm{u_0}{L^2(\R)}\leq \mu$ and admits a modulus of growth $h\mapsto A h^{\frac{1+s}{2}}$ for some $A>\underline{a}$. Then for all $t>0$, the function $x\mapsto u(t,x)$ admits the modulus of growth
\begin{align*}
    h\mapsto a_A(t)h^{\frac{1+s}{2}},
\end{align*}
with $a_A$ given by \eqref{eq: implicitDefintionOfa_A(t)}.
\end{proposition}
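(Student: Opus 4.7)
The plan is to exhibit $u(t)$ as an $L^1_{\loc}$-limit of operator-splitting approximations of \eqref{eq: theMainEquation}, propagate a homogeneous modulus of growth by iterating Proposition \ref{prop: newHolderCoefficientAfterBothSolutionMapsHaveBeenAdded} along the splitting, and pass to the limit at the end.

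For small $\e > 0$ and $k \in \N_0$, set $\tilde u^\e_k \coloneqq [S^B_\e \circ S^K_\e]^{\circ k}(u_0)$. This is the same type of splitting as $S_{\e,t}$ in Section \ref{sec: existenceOfEntropySolutions}, but with the order of the atomic flows reversed so as to match the order used by Proposition \ref{prop: newHolderCoefficientAfterBothSolutionMapsHaveBeenAdded}; the proofs of Proposition \ref{prop: propertiesOfTheApproximateSolutionMap}, Lemma \ref{lem: theApproximateSolutionMapConservesTheL2Norm}, Lemma \ref{lem: approximateSolutionSatisfiesApproximateEntropySolution} and Proposition \ref{prop: solutionWithBVDataIsLimitOfApproximateSolutions} carry over without essential change, and the uniqueness of Proposition \ref{prop: uniquenessAndL1Contraction} forces $\tilde u^\e_{\lfloor t/\e\rfloor} \to u(t)$ in $L^1_{\loc}(\R)$ as $\e \to 0$. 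Fixing $T > t$ once and for all, the adapted Lemma \ref{lem: theApproximateSolutionMapConservesTheL2Norm} supplies $\norm{\tilde u^\e_k}{L^2(\R)} \leq \mu_T \coloneqq \mu\, e^{T\kappa^2/2}$ uniformly for $k\e\leq T$ and $\e$ small enough.

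Starting from $a_0 \coloneqq A$, iterate $a_{k+1} \coloneqq g_A^\e(a_k)$; by \eqref{eq: gMapsIntervallToItself} each $a_k$ stays in $[\underline a, A]$ provided $\e$ is small. An induction using Proposition \ref{prop: newHolderCoefficientAfterBothSolutionMapsHaveBeenAdded} — applied with $\mu_T$ in place of $\mu$, which only perturbs the constants $\e_A$ and $C_A$ — shows that each $\tilde u^\e_k$ admits $h \mapsto a_k h^{(1+s)/2}$ as a modulus of growth. The heart of the argument is then the ODE convergence $a_n \to a_A(t)$ as $\e \to 0$ with $n = \lfloor t/\e\rfloor$: the recursion $a_{k+1} = a_k - \e f(a_k) + \e^2 C_A$ is a perturbed explicit Euler scheme for \eqref{eq: preciseReducedEquationWithoutH}, and combining the one-step consistency bound \eqref{eq: approximatingTheSmootha_AWithDiscreteSteps} with $M_A$-Lipschitz continuity of $f$ on $[\underline a, A]$ yields, via standard discrete Gronwall,
\begin{align*}
    |a_k - a_A(k\e)| \leq \e \bigl(C_A + \tfrac{1}{2}\tilde M_A\bigr)\cdot \frac{e^{k\e M_A}-1}{M_A};
\end{align*}
setting $k = n$ and using uniform continuity of $a_A$ on $[0,T]$ gives the claimed limit.

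To conclude, pass to the limit in the one-sided inequality $\tilde u^\e_n(\cdot + h) - \tilde u^\e_n(\cdot) \leq a_n h^{(1+s)/2}$: test it against an arbitrary non-negative $\varphi \in C_c(\R)$ with $\int \varphi = 1$, use the $L^1_{\loc}$-convergence of $\tilde u^\e_n$ to $u(t)$ together with $a_n \to a_A(t)$, and invoke the Lebesgue differentiation theorem to obtain the a.e. bound $u(t,x+h) - u(t,x) \leq a_A(t)h^{(1+s)/2}$. The main technical obstacle is coordinating three asymptotic scales — the splitting parameter $\e$, the mild $L^2$-norm drift built into Lemma \ref{lem: theApproximateSolutionMapConservesTheL2Norm}, and the Euler discretization error — but each is $O(\e)$ and vanishes uniformly on $[0,T]$, which suffices.
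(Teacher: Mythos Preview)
Your overall strategy---operator splitting, iterated application of Proposition~\ref{prop: newHolderCoefficientAfterBothSolutionMapsHaveBeenAdded}, Euler-scheme convergence to $a_A(t)$, and passage to the $L^1_{\loc}$-limit---is the same as the paper's. The convergence argument for $a_n\to a_A(t)$ and the limit passage via testing against $\varphi$ are both fine and match the paper closely.

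There is, however, a genuine gap in how you handle the $L^2$-norm drift. You write that applying Proposition~\ref{prop: newHolderCoefficientAfterBothSolutionMapsHaveBeenAdded} ``with $\mu_T$ in place of $\mu$ \ldots\ only perturbs the constants $\e_A$ and $C_A$''. This is not correct: the parameter $\mu$ enters the \emph{function} $f$ in \eqref{eq: definitionOfFunctionF} directly, and it also enters $\underline a$ via \eqref{eq: limitValueA}. Replacing $\mu$ by $\mu_T$ therefore replaces $f$ by a different function $f_{\mu_T}$ and shifts the equilibrium $\underline a$, so the recursion $a_{k+1}=a_k-\e f_{\mu_T}(a_k)+\e^2 C_A$ converges not to $a_A(t)$ but to the solution of $\dot a=-f_{\mu_T}(a)$, $a(0)=A$, which is strictly larger. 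You would thus prove the proposition only with a weaker Hölder coefficient than claimed. (Separately, your expression $\mu_T=\mu e^{T\kappa^2/2}$ misreads Lemma~\ref{lem: theApproximateSolutionMapConservesTheL2Norm}: the bound is $e^{\frac12 \e t\kappa^2}$, so the correct drift factor is $e^{\frac12 \e T\kappa^2}$, which depends on $\e$ and tends to $1$.)

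The paper sidesteps this cleanly: it first assumes \emph{strict} inequality $\norm{u_0}{L^2}<\mu$, so that for $n$ large the bound $e^{\frac{t}{2n}\kappa^2}\norm{u_0}{L^2}\leq\mu$ holds and Proposition~\ref{prop: newHolderCoefficientAfterBothSolutionMapsHaveBeenAdded} can be applied with the \emph{original} $\mu$ at every step; the equality case $\norm{u_0}{L^2}=\mu$ is then recovered at the end by continuity of $a_A(t)$ in $\mu$. Adopting this device (or, alternatively, carefully tracking the $O(\e)$ perturbation of $f$ through the Gronwall estimate) closes the gap.
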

\begin{proof}
Consider $t>0$ fixed, and assume without loss of generality that $\norm{u_0}{L^2(\R)}<\mu$; if the proposition holds in this case, it necessarily also holds in the case $\norm{u_0}{L^2(\R)}\leq \mu$ as the implicit $\mu$-dependence of $a_A(t)$ is a continuous one. Pick a large $n\in\N$, set $\e=\tfrac{t}{n}$ and consider the family of functions $u_n^k\in BV(\R)$ defined inductively by
\begin{align*}
    \begin{cases}
    u_n^0 =S^B_\e(u_0),\\
    u_n^k= S^B_\e\circ S^K_\e(u^{k-1}_n),\quad k=1,2,\dots,n,
    \end{cases}
\end{align*}
As $u_0$ admits $h\mapsto Ah^{\frac{1+s}{2}}$ as a modulus of growth, so does $u_n^0$ by Lemma \ref{lem: generalEvolutionOfOleinikEstimateUnderBurgersSolutionMap}. Observe also that each $u^k_n\in BV(\R)$ as follows by induction and the properties of $S^B_\e$ and $S^K_\e$ listed at the very beginning in the proof of Proposition \ref{prop: propertiesOfTheApproximateSolutionMap}. Moreover, by similar reasoning as in the proof of Lemma \ref{lem: theApproximateSolutionMapConservesTheL2Norm}, we have
\begin{align*}
    \norm{u^k_n}{L^2(\R)}\leq e^{\frac{k}{2}\e^2\k^2}\norm{u_0}{L^2(\R)}\leq e^{\frac{t}{2n}\k^2}\norm{u_0}{L^2(\R)},\quad k=0,1,\dots,n,
\end{align*}
where $\k=\norm{K}{L^1(\R)}$. Since we have a strict inequality $\norm{u_0}{L^2(\R)}<\mu$, we can assume $n$ large enough such that $\norm{u^k_n}{L^2(\R)}\leq \mu$ for every $k$. We define further the coefficients $a_n^k$ inductively by
\begin{align*}
    \begin{cases}
    a_n^0 =A,\\
    a_n^k= g^\e_A(a_n^{k-1}),\quad k=1,2,\dots,n,
    \end{cases}
\end{align*}
where $g_A^\e$ is given by \eqref{eq: simplerNotationOfFatIterationFunction}. We assume $n$ large enough such that $\e=\frac{t}{n}$ is both less than $\e_A>0$ and small enough such that $g_A^\e$ maps $[\underline{a},A]$ to itself (see the discussion leading up to \eqref{eq: gMapsIntervallToItself}). In particular, each $a^k_n$ is in $[\underline{a},A]$. We may now apply Proposition \ref{prop: newHolderCoefficientAfterBothSolutionMapsHaveBeenAdded} inductively to each pair $(u_n^k,a_n^k)$, starting with $(u_n^0,a_n^0)$. As $u_n^0$ admits $h\mapsto a_n^0 h^{\frac{1+s}{2}}$ as a modulus of growth,
Proposition \ref{prop: newHolderCoefficientAfterBothSolutionMapsHaveBeenAdded} infers the same relationship for the pair $(u^1_n,a_n^1)$, and by repeating the argument, the same can be said for all pairs $(u_n^k,a_n^k)$. Most importantly, $u_n^n$ admits $h\mapsto a_n^n h^{\frac{1+s}{2}}$ as a modulus of growth. The proposition will now follow if we can, as $n\to\infty$, establish the limits
\begin{align}\label{eq: discreteModulusOfGrowthIsCloseToContinuous}
    a_n^n\to&\, a_A(t),\\\label{eq: discreteEntropySolutionIsCloseToContinuous}
     u_n^n\to&\, u(t),
\end{align}
where $u(t)=u(t,\cdot)$ and the latter limit is taken in $L^1_{\loc}(\R)$. Indeed, in this scenario we can let $\varphi$ denote any non-negative smooth function of compact support that satisfies $\int_\R \varphi \mathrm{d}x=1$ so to calculate for $h>0$
\begin{equation}\label{eq: localConvergenceImpliesModulusOfGrowthIsPreserved}
    \begin{split}
        \esssup_{x\in R}\Big[u(t,x+h)-u(t,x)\Big]=&\,\sup_{\varphi}\cop{u(t,\cdot+h)-u(t,\cdot),\varphi}\\
    =&\,\sup_{\varphi}\lim_{n\to\infty}\cop{u_n^n(\cdot+h)-u_n^n,\varphi}\\
    \leq &\,\sup_{\varphi}\lim_{n\to\infty}a_n^n h^{\frac{1+s}{2}}\\
    =&\, a_A(t)h^{\frac{1+s}{2}}.
    \end{split}
\end{equation}
We first prove \eqref{eq: discreteModulusOfGrowthIsCloseToContinuous}. Using the explicit form \eqref{eq: simplerNotationOfFatIterationFunction} of $g_A^\e$ with $\e=\frac{t}{n}$, the constants \eqref{eq: twoUsefulConstants} and the inequality \eqref{eq: approximatingTheSmootha_AWithDiscreteSteps} we can calculate for $k\geq1$,
\begin{equation}\label{eq: generalIterativeErrorOnSmoothAndDiscretea_A}
    \begin{split}
          &\,\Big|a_n^k- a_A\Big(\tfrac{kt}{n}\Big)\Big|\\
          = &\, \Big|g_A^\e\Big(a_n^{k-1}\Big) - a_A\Big(\tfrac{(k-1)t}{n}+\tfrac{t}{n}\Big)\Big|\\
          \leq&\,\Big|a_n^{k-1}-a_A\Big(\tfrac{(k-1)t}{n}\Big)\Big| + (\tfrac{t}{n})\Big|f\Big(a_n^{k-1}\Big)-f\Big(a_A\Big(\tfrac{(k-1)t}{n}\Big)\Big)\Big| + (\tfrac{t}{n})^2\Big(C_A + \tfrac{1}{2}\tilde{M}_A\Big)\\
    \leq &\, \Big[1+(\tfrac{t}{n})M_A\Big]\Big|a_n^{k-1}- a_A\Big(\tfrac{(k-1)t}{n}\Big)\Big| +  (\tfrac{t}{n})^2D_A,
    \end{split}
\end{equation}
with $D_A\coloneqq C_A + \frac{1}{2}\tilde{M}_A$. By repeated use of \eqref{eq: generalIterativeErrorOnSmoothAndDiscretea_A}, and the fact that $a_n^0=a_A(0)=A$, we conclude
\begin{align*}
 |a_n^n-a_A(t)|
 \leq  (\tfrac{t}{n})^2D_A\sum_{k=0}^{n-1}\Big[1+(\tfrac{t}{n})M_A\Big]^k
 \leq \tfrac{1}{n}\Big[t^2D_A e^{tM_A}\Big],
\end{align*}
and thus \eqref{eq: discreteModulusOfGrowthIsCloseToContinuous} is established. To prove \eqref{eq: discreteEntropySolutionIsCloseToContinuous}, we recall definition \eqref{eq: definitionOfApproximationOfSolutionMap} of the approximate solution map $S_{\e,t}$ and observe the relation
\begin{align}\label{eq: relationBetweenUnnAndUEpsilon}
    u_n^n=S^B_\e\circ S_{\e,t}(u_0)\eqqcolon S^B_\e(u^\e(t)),
\end{align}
where the definition of $u^\e$ coincides with \eqref{eq: definitionOfFamilyOfApproximateSolutions}, although we now work with a particular $u_0$ and $\e=\tfrac{t}{n}$. As Proposition \ref{prop: solutionWithBVDataIsLimitOfApproximateSolutions} ensures that $\lim_{\e\to 0} u^\e(t)=u(t)$ in $L^{1}_{\loc}(\R)$, the same limit then carries over to $u_n^n$ (as $n\to\infty$) by \eqref{eq: relationBetweenUnnAndUEpsilon} and the time continuity of the map $S^B_\e$ \eqref{eq: propertyFourOfBurgerMap} together with the $TV$ bound of $u^\e$ provided by Proposition \ref{prop: propertiesOfTheApproximateSolutionMap}.
With the two limits \eqref{eq: discreteModulusOfGrowthIsCloseToContinuous} and \eqref{eq: discreteEntropySolutionIsCloseToContinuous} established, the proof is complete.
\end{proof}

We may now prove Theorem \ref{thm: oneSidedHolderRegularity}.

\begin{proof}[Proof of Theorem \ref{thm: oneSidedHolderRegularity}]
We prove the theorem first for $u_0\in C_c^\infty(\R)$, and without loss of generality we assume $u_0\neq 0$. As the positive constant $\mu$ (introduced at the beginning of the subsection) was arbitrary, we may assume $\mu= \norm{u_0}{L^2(\R)}$. As $u_0\in C^\infty_c(\R)$, we infer from  Proposition \ref{prop: preciseModulusOfGrowthForEntropySolution} the existence of a sufficiently large $A$ such that $u$, the entropy solution of \eqref{eq: theMainEquation} corresponding to $u_0$, admits $h\mapsto a_A(t)h^{\frac{1+s}{2}}$ as a modulus of growth for all $t>0$. 

Observe that we have the following elementary inequality if $a-\underline{a}\geq 0$
\begin{align*}
   \Big(a-\underline{a}\Big)^{\frac{5+s}{2+s}} =    \Big(a-\underline{a}\Big)^{\frac{2-s}{2+s}}    \Big(a-\underline{a}\Big)^{\frac{3+2s}{2+s}} \leq  a^{\frac{2-s}{2+s}}\Big(a^{\frac{3+2s}{2+s}}-\underline{a}^{\frac{3+2s}{2+s}}\Big),
\end{align*}
where we for the second factor used that $x\mapsto x^p$ is super-additive when $x\geq 0$ and $p\geq 1$ (giving the desired conclusion for $x=a-\underline{a}$ and $p=\frac{3+2s}{2+s}$). Using this in \eqref{eq: implicitDefintionOfa_A(t)} gives
\begin{align}\label{eq: explicitExpressionFora_A}
   t\leq &\,\int_{a_A(t)}^A\frac{\dd a}{\gamma\Big(a-\underline{a}\Big)^{\frac{5+s}{2+s}}}
    =\frac{2+s}{3\gamma\Big(a_A(t)-\underline{a}\Big)^{\frac{3}{2+s}}}-\frac{2+s}{3\gamma\Big(A-\underline{a}\Big)^{\frac{3}{2+s}}}.
\end{align}
Removing the negative term on the right hand side and then rewriting \eqref{eq: explicitExpressionFora_A}, further gives 
\begin{align*}
a_A(t)\leq \underline{a} + \bigg(\frac{2+s}{3\gamma}\bigg)^{\frac{2+s}{3}}\frac{1}{t^{\frac{2+s}{3}}}\eqqcolon a(t).
\end{align*}
In particular, $u$ must also admit $h\mapsto a(t)h^{\frac{1+s}{2}}$ as a modulus of growth. By Lemma \ref{lem: relationBetweenQuantities}, we see that $a(t)$ may equivalently be written
\begin{align}\label{eq: theVerySimpleHolderCoefficient}
   a(t) = C_1(s) \kappa_s^{\frac{2+s}{3+2s}}\mu^{\frac{1+s}{3+2s}} + C_2(s)\frac{\mu^{\frac{1-s}{3}}}{t^{\frac{2+s}{3}}},
\end{align}
where $C_1(s)$ and $C_2(s)$ are given by \eqref{eq: C1AndC2}. This expression is exactly \eqref{eq: holderCoefficient} save for the fact that we have required $\k_s$ (introduced at the beginning of the subsection) to be greater or equal to $|K|_{TV^s}$ \textit{and} positive. Thus, we may not directly set $\kappa_s=|K|_{TV^s}$ if $K=0$. However, by $a(t)$'s continuous dependence on $\kappa_s$, it is clear that no problem may occur. Thus, Theorem \ref{thm: oneSidedHolderRegularity} follows for $C^\infty_c$-initial data.

Next, consider $u_0\in L^2\cap L^\infty(\R)$ and let $u$ denote the corresponding entropy solution of \eqref{eq: theMainEquation}. Pick a sequence of entropy solutions $(u_k)_{k\in\N}$ whose initial data $(u_{0,k})_{k\in\N}\subset C_c^\infty(\R)$ satisfies
\begin{align*}
    \norm{u_{0,k}}{L^2(\R)}\leq&\, \norm{u_0}{L^2(\R)}, & \norm{u_{0,k}}{L^\infty(\R)}\leq&\, \norm{u_0}{L^\infty(\R)},
\end{align*}
and yields the limit $\lim_{k\to\infty} u_{0,k}=u_0$ in $L^1_{\loc}(\R)$. By Proposition \ref{prop: uniquenessAndL1Contraction}, we then also get $\lim_{k\to\infty}u_k(t)= u(t)$ in $L^1_{\loc}(\R)$. Now Theorem \ref{thm: oneSidedHolderRegularity} carries over to $u$ by a calculation similar to \eqref{eq: localConvergenceImpliesModulusOfGrowthIsPreserved}, and so the theorem has been proved for $L^2\cap L^\infty$-initial data.

Finally, that this result can be extended to all weak solutions provided by Corollary \ref{cor: extensionToPureL2Data} follows by a density argument as above (using the continuity of the solution map $S$ of Corollary \ref{cor: extensionToPureL2Data} instead of the weighted $L^1$-contraction of Proposition \ref{prop: uniquenessAndL1Contraction}).
\end{proof}

\appendix

\section{Auxiliary results}
In the coming lemma we work with the concept of a modulus of growth as defined by Def. \ref{def: modulusOfGrowth}.
\begin{lemma}\label{lem: modulusOfGrowthImpliesLeftAndRightLimits}
Let $f\in L^1_{\loc}(\R)$ admit a modulus of growth $\omega$ that satisfies $\omega(0+)=0$. Then $f$ admits essential left and right limits at each point $x\in\R$. In particular, there are functions $f^-$ and $f^+$, respectively left- and right-continuous, that coincides a.e. with $f$.
\end{lemma}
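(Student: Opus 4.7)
The plan is to establish, at each $x_0\in\R$, the existence of essential one-sided limits $\esslim_{y\to x_0^+}f(y)$ and $\esslim_{y\to x_0^-}f(y)$; once both exist, I will define $f^+(x)\coloneqq\esslim_{y\to x^+}f(y)$ and $f^-(x)\coloneqq\esslim_{y\to x^-}f(y)$ and check that these are right- respectively left-continuous, with $f^\pm=f$ a.e. The driving idea is to compare one-sided essential suprema and infima via the modulus of growth on shrinking intervals.

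First I would upgrade the hypothesis to the joint statement $f(z)-f(y)\leq\omega(z-y)$ for a.e. $(y,z)\in\R^2$ with $z>y$. This is a Fubini argument: the assumption $\esssup_x[f(x+h)-f(x)]\leq\omega(h)$ means that for each $h>0$ the nonnegative function $[f(\cdot+h)-f(\cdot)-\omega(h)]_+$ has zero spatial integral, so its double integral over $(x,h)\in\R\times\R^+$ vanishes, forcing it to be zero a.e. As a by-product, picking any $y_0$ with $f(y_0)$ finite and applying the upgraded inequality with $y_0$ as the smaller (resp. larger) variable essentially bounds $f$ above on $[y_0,y_0+R]$ by $f(y_0)+\omega(R)$ and below on $[y_0-R,y_0]$ by $f(y_0)-\omega(R)$; using a second reference point on the opposite side of any given compact set yields $f\in L^\infty_{\loc}(\R)$.

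The core step handles the right limit at $x_0$ (the left limit is symmetric). For $\epsilon>0$ set
\begin{align*}
    M_+(\epsilon)\coloneqq \esssup_{(x_0,x_0+\epsilon)}f,\qquad m_+(\epsilon)\coloneqq \essinf_{(x_0,x_0+\epsilon)}f,
\end{align*}
both finite by the previous paragraph and monotone in $\epsilon$, so they admit limits $M_+\geq m_+$ as $\epsilon\to 0^+$. For $0<\epsilon'<\epsilon$, the upgraded inequality applied to $y\in(x_0,x_0+\epsilon')$ and $z\in(x_0+\epsilon',x_0+\epsilon)$ gives, after taking $\esssup$ in $z$ and then $\essinf$ in $y$,
\begin{align*}
    \esssup_{(x_0+\epsilon',x_0+\epsilon)}f\leq m_+(\epsilon')+\omega(\epsilon).
\end{align*}
Letting $\epsilon'\to 0^+$ the left-hand side fills in monotonically to $M_+(\epsilon)$, yielding $M_+(\epsilon)\leq m_++\omega(\epsilon)$; then $\epsilon\to 0^+$ together with $\omega(0+)=0$ forces $M_+\leq m_+$, hence equality. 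This common value is the desired essential right limit $f^+(x_0)$.

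Finally, the two properties of $f^+$ follow routinely. For right-continuity at $x_0$: given $\delta>0$, pick $\eta>0$ with $|f(y)-f^+(x_0)|<\delta$ for a.e. $y\in(x_0,x_0+\eta)$; then for $x\in(x_0,x_0+\eta)$ the essential right limit at $x$ is computed over a right-neighbourhood of $x$ sitting inside $(x_0,x_0+\eta)$, so $|f^+(x)-f^+(x_0)|\leq\delta$. For the identification $f^+=f$ a.e.: the essential right-limit property forces $\tfrac{1}{\epsilon}\int_{x_0}^{x_0+\epsilon}f\,\mathrm{d}y\to f^+(x_0)$, while the Lebesgue differentiation theorem gives the same one-sided average tending to $f(x_0)$ at every Lebesgue point. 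The main obstacle is the joint-a.e. upgrade of the hypothesis; once it is in place, all subsequent steps are elementary $\esssup/\essinf$ manipulation.
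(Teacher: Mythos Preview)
Your proof is correct and follows essentially the same route as the paper: both show that the essential oscillation of $f$ on shrinking one-sided intervals is controlled by $\omega$ and hence vanishes, then invoke Lebesgue differentiation to identify $f^\pm$ with $f$ a.e. Your version is more explicit in two places---the Fubini upgrade to a joint-a.e.\ inequality and the preliminary $L^\infty_{\loc}$ bound ensuring finiteness of the essential limits---whereas the paper compresses the oscillation estimate into a single line $\esslimsup f - \essliminf f \leq \limsup \omega(y_1-y_2)=0$ and leaves those details implicit.
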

\begin{proof}
For any $x\in\R$ the existence of an essential left limit $f(x-)$ of $f$ at $x$, follows from the calculation
\begin{align*}
    &\esslimsup_{\substack{y<0\\ y\to 0}} f(x+y) - \essliminf_{\substack{y<0\\ y\to 0}} f(x+y)\\
    =&\esslimsup_{\substack{y_2< y_1< 0\\ y_2,y_1\to0}}\Big[f(x+y_1)-f(x+y_2)\Big]\\
    \leq& \limsup_{\substack{y_2< y_1< 0\\ y_2,y_1\to0}}\omega(y_1-y_2)=0.
\end{align*}
By the Lebesgue differentiation theorem, the function $f^{-}(x)\coloneqq f(x-)$ can only differ from $f$ on a null set, and moreover, must be left continuous as the above calculation could be repeated for $f^-$ with essential limits replaced by limits. A similar argument yields the existence of an essential right limit $f(x+)$ of $f$ at each $x\in\R$ and further that $f^+(x)\coloneqq f(x+)$
is a right-continuous function agreeing a.e. with $f$.
\end{proof}

The next lemma deals with quantities appearing throughout the paper and the relations between them. For convenience, we here list the definition of each relevant quantity; some of them given for the first time. The quantities $c_s$ and $\gamma$ were in \eqref{eq: definitionOfCs} and \eqref{eq: simplerNotationOfIteratingFunction} defined to be
\begin{align*}
        c_s =&\, \bigg[\frac{(2+s)(3+s)}{2(1+s)^2}\bigg]^{\frac{1+s}{2(2+s)}}, & \gamma = &\, \frac{1+s }{2^{\frac{2}{1+s}}c_s^{\frac{1-s}{1+s}}\mu^{\frac{1-s}{2+s}}}.
\end{align*}
We also introduce the expressions $C_1(s)$ and $C_2(s)$ by 
\begin{align}\label{eq: C1AndC2}
    C_1(s)\coloneqq&\,\frac{2^{\frac{3+s}{6+4s}}[(2+s)(3+s)]^{\frac{1+s}{6+4s}}}{1+s}, & C_2(s)\coloneqq&\,\frac{2^{\frac{4+2s}{3+3s}}(2+s)^{\frac{5+s}{6}}(3+s)^{\frac{1-s}{6}}}{2^{\frac{1-s}{6}}3^{\frac{2+s}{3}}(1+s)}.
\end{align}
In the coming lemma, we will also see the quantities $\mu$ and $\k_s$; these are simply placeholders for the expressions $\norm{u_0}{L^2(\R)}$ and $|K|_{TV^s}$ respectively and will not affect the algebra in any non-trivial way.

\begin{lemma}\label{lem: relationBetweenQuantities}
With $c_s, \gamma, C_1(s), C_2(s), \mu$ and $\k_s$ as they appear above, we have the relations
\begin{align}\label{eq: relationOfLimitA}
   \underline{a}\coloneqq \bigg(\frac{2c_s\kappa_s}{1+s}\bigg)^{\frac{2+s}{3+2s}}\mu^{\frac{1+s}{3+2s}}=&\,C_1(s)\kappa_s^{\frac{2+s}{3+2s}}\mu^{\frac{1+s}{3+2s}},\\
   \label{eq: relationOfC2}
   \bigg(\frac{2+s}{3\gamma}\bigg)^{\frac{2+s}{3}} = C_2&(s)\mu^{\frac{1-s}{3}}.
\end{align}
\end{lemma}
\begin{proof}
We start with \eqref{eq: relationOfLimitA}: inserting for $c_s$ on the left-hand side of \eqref{eq: relationOfLimitA} we get
\begin{align*}
    &\,\bigg(\frac{2}{1+s}\bigg)^{\frac{2+s}{3+2s}}\bigg(\frac{(2+s)(3+s)}{2(1+s)^2}\bigg)^{\frac{1+s}{2(3+2s)}}\kappa_s^{\frac{2+s}{3+2s}}\mu^{\frac{1+s}{3+2s}}\\
    =&\,\underbrace{\Bigg[\frac{2^{\frac{3+s}{6+4s}}[(2+s)(3+s)]^{\frac{1+s}{6+4s}}}{1+s}\Bigg]}_{C_1(s)}\kappa_s^{\frac{2+s}{3+2s}}\mu^{\frac{1+s}{3+2s}},
\end{align*}
and so \eqref{eq: relationOfLimitA} is established. Second, we prove \eqref{eq: relationOfC2}:  if we on the left-hand side of \eqref{eq: relationOfC2} insert for $\gamma$ we get 
\begin{align*}
\Bigg(\frac{(2+s)^{\frac{2+s}{3}}2^{\frac{2(2+s)}{3(1+s)}}}{3^{\frac{2+s}{3}}(1+s)^{\frac{2+s}{3}}}\Bigg)c_s^{\frac{(1-s)(2+s)}{3(1+s)}}\mu^{\frac{1-s}{3}}.
\end{align*}
Further inserting for $c_s$ we obtain
\begin{align*}
&\,\Bigg(\frac{(2+s)^{\frac{2+s}{3}}2^{\frac{2(2+s)}{3(1+s)}}}{3^{\frac{2+s}{3}}(1+s)^{\frac{2+s}{3}}}\Bigg)\bigg(\frac{(2+s)(3+s)}{2(1+s)^2}\bigg)^{\frac{1-s}{6}}\mu^{\frac{1-s}{3}}\\
=&\,\underbrace{\Bigg[\frac{2^{\frac{4+2s}{3+3s}}(2+s)^{\frac{5+s}{6}}(3+s)^{\frac{1-s}{6}}}{2^{\frac{1-s}{6}}3^{\frac{2+s}{3}}(1+s)}\Bigg]}_{C_2(s)}\mu^{\frac{1-s}{3}},
\end{align*}
and so \eqref{eq: relationOfC2} is established. 
\end{proof}

\section{Proof of Corollary \ref{cor: decayingHeightBoundForEntropySolutions} and Corollary \ref{cor: maximalLifespanForClassicalSolutions}}\label{sec: proofsOfCorollaries}
We prove Corollary \ref{cor: decayingHeightBoundForEntropySolutions} which provides a decaying $L^\infty$ bound for entropy solutions of \eqref{eq: theMainEquation}.
\begin{proof}[Proof of Corollary \ref{cor: decayingHeightBoundForEntropySolutions}]
By the $s=0$ case of Theorem \ref{thm: oneSidedHolderRegularity} we know that $u(t)$ admits the modulus of growth (Def. \ref{def: modulusOfGrowth}) $h\mapsto a(t)h^{\frac{1}{2}}$, where $a(t)$ is given by 
\begin{align*}
    a(t)=2^{\frac{4}{3}} 3^{\frac{1}{6}}\norm{K}{L^1(\R)}^{\frac{2}{3}}\norm{u_0}{L^2(\R)}^{\frac{1}{3}} +  \frac{4\norm{u_0}{L^2(\R)}^{\frac{1}{3}}}{3^{\frac{1}{2}}t^{\frac{2}{3}}}.
\end{align*}
This expression is precisely what is provided by \eqref{eq: holderCoefficient} when using $C_1(0)=2^{\frac{2}{3}}3^{\frac{1}{6}}$, $C_2(0)=4/3^{\frac{1}{2}}$ and $|K|_{TV^0}=2\norm{K}{L^1}$. Setting $\mu=\norm{u_0}{L^2(\R)}$ in Lemma \ref{lem: explicitBoundOnSupFromExplicitOmegaFunction} and using $\norm{u(t)}{L^2(\R)}\leq \norm{u_0}{L^2(\R)}$ we infer from said lemma -- more specifically \eqref{eq: explicitBoundOnSupFromExplicitOmegaFunction} -- that
\begin{align*}
     \norm{u(t)}{L^\infty(\R)}\leq&\, 2^{\frac{1}{4}}3^{\frac{1}{4}} \norm{u_0}{L^2(\R)}^{\frac{1}{2}}a(t)^{\frac{1}{2}},
\end{align*}
for all $t>0$, where we used that $c_s=3^{\frac{1}{4}}$ when $s=0$. Using the sub-additivity of $y\mapsto |y|^{\frac{1}{2}}$ we infer that
\begin{align*}
    a(t)^{\frac{1}{2}} \leq 2^{\frac{2}{3}} 3^{\frac{1}{12}}\norm{K}{L^1(\R)}^{\frac{1}{3}}\norm{u_0}{L^2(\R)}^{\frac{1}{6}} + \frac{2\norm{u_0}{L^2(\R)}^{\frac{1}{6}}}{3^{\frac{1}{4}}t^{\frac{1}{3}}},
\end{align*}
and so inserting this in the above inequality we get
\begin{align*}
    \norm{u(t)}{L^\infty(\R)}\leq 2^{\frac{11}{12}}3^{\frac{1}{3}}\norm{K}{L^1(\R)}^{\frac{1}{3}}\norm{u_0}{L^2(\R)}^{\frac{2}{3}} + \frac{2^{\frac{5}{4}}\norm{u_0}{L^2(\R)}^{\frac{2}{3}}}{t^{\frac{1}{3}}},
\end{align*}
for all $t>0$. 
\end{proof}

Next, we prove Corollary \ref{cor: maximalLifespanForClassicalSolutions} which established a maximal lifespan for classical solutions of \eqref{eq: theMainEquation} with $L^2\cap L^\infty$ data.

\begin{proof}[Proof of Corollary \ref{cor: maximalLifespanForClassicalSolutions}]
Consider $s\in[0,1]$ fixed for now, and assume $|K|_{TV^s}<\infty$.
As (bounded) classical solutions are entropy solutions, we may associate $u\in L^\infty\cap C^1((0,T)\times \R)$ with the global entropy solution admitting $u_0$ as initial data, provided by Theorem \ref{thm: wellPosedness}; the discussion following the proof of Proposition \ref{prop: uniquenessAndL1Contraction} justifies this viewpoint. Referring to this solution also as $u$, we have by \eqref{eq: boundsForWellPosednessThm} that $x\mapsto u(T,x)$ is a well defined element of $L^2\cap L^\infty(\R)$ approximated in $L^2$ sense by $u(t)$ as $t\nearrow T$. Setting $v(t,x)\coloneqq u(T-t,-x)$, we see through pointwise evaluation that $v$ also is a classical solution of \eqref{eq: theMainEquation} (and thus an entropy solution) on $(0,T)\times \R$ with initial data $v_0(x)\coloneqq u(T,-x)$. From \eqref{eq: boundsForWellPosednessThm} we then infer $\norm{v_0}{L^2(\R)}=\norm{u_0}{L^2(\R)}$ since
\begin{align*}
    \norm{v_0}{L^2(\R)}= \norm{u(T)}{L^2(\R)}\leq \norm{u_0}{L^2(\R)}=\norm{v(T)}{L^2(\R)}\leq \norm{v_0}{L^2(\R)}.
\end{align*}
Using the identity $u_0(x)=v(T,-x)$ for a.e. $x\in\R$ and applying Theorem \ref{thm: oneSidedHolderRegularity} to $v$ we further find for all $h>0$ and a.e. $x\in \R$ that
\begin{align}\label{eq: backwardsHolderEstimateForV}
   u_0(x-h)-u_0(x) = v(T,-x+h)-v(T,-x)\leq a(T)h^{\frac{1+s}{2}},
\end{align}
where $a(T)$ is given by
\begin{align}\label{eq: simpleHolderConditionForProofOfMaximalLifespan}
    a(T)=C_1(s) |K|_{TV^s}^{\frac{2+s}{3+2s}}\norm{u_0}{L^2(\R)}^{\frac{1+s}{3+2s}} + C_2(s)\frac{\norm{u_0}{L^2(\R)}^{\frac{1-s}{3}}}{T^{\frac{2+s}{3}}}\eqqcolon \underline{a} + \frac{q}{T^{\frac{2+s}{3}}},
\end{align}
and where we have substituted $\norm{u_0}{L^2(\R)}$ for $\norm{v_0}{L^2(\R)}$ as the two quantities agree. Dividing each side of \eqref{eq: backwardsHolderEstimateForV} by $h^{\frac{1+s}{2}}$ and taking the essential supremum with respect to $x\in \R$ we get
\begin{align}\label{eq: initialDataMustHaveSomeHolderRegularityForClassicalSolution}
    [u_0]_s\coloneqq \esssup_{\substack{x\in\R\\ h>0}}\bigg[\frac{u_0(x-h)-u_0(x)}{h^{\frac{1+s}{2}}}\bigg]\leq \underline{a} + \frac{q}{T^{\frac{2+s}{3}}},
\end{align}
and if $[u_0]_s > \underline{a}$ then \eqref{eq: initialDataMustHaveSomeHolderRegularityForClassicalSolution} can be rewritten as
\begin{align}\label{eq: slightlyPreciseMaximalLifespan}
    T\leq \Bigg[\frac{q}{[u_0]_s-\underline{a}}\Bigg]^{\frac{3}{2+s}} = \Bigg(\frac{C_2(s)}{1-\tfrac{\underline{a}}{[u_0]_s}}\Bigg)^{\frac{3}{2+s}}\frac{\norm{u_0}{L^2(\R)}^{\frac{1-s}{2+s}}}{[u_0]_s^{\frac{3}{2+s}}}\eqqcolon F\Big(\tfrac{\underline{a}}{[u_0]_s}\Big)\frac{\norm{u_0}{L^2(\R)}^{\frac{1-s}{2+s}}}{[u_0]_s^{\frac{3}{2+s}}},
\end{align}
where the first equality replaced $q$ by its explicit expression as given by \eqref{eq: simpleHolderConditionForProofOfMaximalLifespan}. We now show that this gives for any $\rho\in(0,1)$ the following implication
\begin{align}\label{eq: chainOfImplications}
    [u_0]_s^{3+2s}>\bigg(\frac{C_1(s)}{\rho}\bigg)^{3+2s}|K|_{TV^s}^{2+s}\norm{u_0}{L^2(\R)}^{1+s}, \quad \implies \quad T\leq F(\rho)\frac{\norm{u_0}{L^2(\R)}^{\frac{1-s}{2+s}}}{[u_0]_s^{\frac{3}{2+s}}}.
\end{align}
Indeed, using the explicit expression \eqref{eq: simpleHolderConditionForProofOfMaximalLifespan} for $\underline{a}$ we see that the left-hand side of \eqref{eq: chainOfImplications} is equivalent to $[u_0]_s>\underline{a}/\rho$ which, as $\rho\in (0,1)$, implies that $[u_0]_s>\underline{a}$ and so \eqref{eq: slightlyPreciseMaximalLifespan} holds. By observing that $\rho\mapsto F(\rho)$ is strictly increasing on $(0,1)$ and that $\rho>\underline{a}/[u_0]_s$ we see that the right-hand side of \eqref{eq: chainOfImplications} then follows from \eqref{eq: slightlyPreciseMaximalLifespan}. With \eqref{eq: chainOfImplications} established,
the corollary follows: for any $\rho\in(0,1)$ we get such universal constants $c$ and $C$ by setting 
\begin{align}\label{eq: theConclusionOfCorollary}
   c=&\, \sup_{s\in[0,1]}\bigg(\frac{C_1(s)}{\rho}\bigg)^{3+2s},  &
    C=&\,\sup_{s\in[0,1]}F(\rho)=\sup_{s\in[0,1]} \Bigg(\frac{C_2(s)}{1-\rho}\Bigg)^{\frac{3}{2+s}}.
\end{align}
The free parameter $\rho$ allows us to shrink one of the two constants at the cost of enlarging the other; in particular, $c$ is at its smallest for $\rho\to 1$ while $C$ is at its smallest for $\rho\to 0$.
\end{proof}

\section*{Acknowledgement}
The first author acknowledges the support of The Research Council of Norway through the project INICE (301538).

\bibliographystyle{siam}
\bibliography{references}

\end{document}